\newcommand{\esssup}{\mathop{\mathrm{esssup}}}
\numberwithin{equation}{section}
\title{\bf Stochastic Linear-Quadratic Stackelberg Differential Game with Asymmetric Informational Uncertainties: Robust Optimization Approach\thanks{This work is supported by National Key R\&D Program of China (2022YFA1006104), National Natural Science Foundation of China (12271304), and Shandong Provincial Natural Science Foundations (ZR2022JQ01, ZR2020ZD24).}}
\author{\normalsize  Na Xiang\thanks{\it School of Mathematics, Shandong University, Jinan 250100, P.R. China, E-mail: 202211967@mail.sdu.edu.cn} , Jingtao Shi\thanks{Corresponding author, \it School of Mathematics, Shandong University, Jinan 250100, P.R. China, E-mail: shijingtao@sdu.edu.cn}}
\newtheorem{mypro}{Proposition}[section]
\newtheorem{mythm}{Theorem}[section]
\newtheorem{mydef}{Definition}[section]
\newtheorem{mylem}{Lemma}[section]
\newtheorem{Remark}{Remark}[section]
\newtheorem{Example}{Example}[section]
\begin{document}

\maketitle
	
\noindent{\bf Abstract:} This paper is concerned with a two-person zero-sum indefinite stochastic linear-quadratic Stackelberg differential game with asymmetric informational uncertainties, where both the leader and follower face different and unknown disturbances. We take a robust optimization approach and soft-constraint analysis, a min-max stochastic linear-quadratic optimal control problem is solved by the follower firstly. Then, the leader deal with a max-min stochastic linear-quadratic optimal control problem of forward-backward stochastic differential equations in an augmented space. State feedback representation of the robust Stackelberg equilibrium is given in a more explicit form by decoupling technique, via some Riccati equations.
	
\vspace{2mm}

\noindent{\bf Keywords:} Stackelberg differential game, forward-backward stochastic differential equation, asymmetric informational uncertainties, robust Stackelberg equilibrium
	
	
\section{Introduction}
	
The Stackelberg game, also known as the leader-follower game, is widely involved in many fields, such as economics, finance and engineering. The research of Stackelberg game was first introduced by von Stackelberg \cite{Stackelberg34}, who proposed the concept of a hierarchical solution in static competitive economics. Bagchi and Ba\c{s}ar \cite{Bagchi-Basar81} initially studied the {\it stochastic linear-quadratic} (SLQ) Stackelberg differential game, while the diffusion term of the state equation does not contain the state and control variables, and the Stackelberg solution was expressed in terms of Riccati equations. Yong \cite{Yong02} formulated a general framework, in which the coefficients could be random, the control variables could enter the diffusion term of the state equation and the weight matrices of the control variable in the cost functionals need not to be positive definite. The leader's problem was described as an optimal control problem of {\it forward-backward stochastic differential equations} (FBSDEs). By a decoupling method, the open-loop solution can be represented as a state feedback form, provided the associated stochastic Riccati equation is solvable. From then on, there has been extensive research on stochastic Stackelberg differential game problems. Let us mention a few related to this article. Bensoussan et al. \cite{Bensoussan-Chen-Sethi15} established maximum principles for the stochastic Stackelberg differential game with the control-independent diffusion term in different information structures. Shi et al. \cite{Shi-Wang-Xiong16, Shi-Wang-Xiong17} investigated SLQ Stackelberg differential games with asymmetric information, and Li et al. \cite{Li et al21} solved a similar problem by a layered calculation method. Li and Yu \cite{Li-Yu18} characterized the unique equilibrium of a nonzero-sum SLQ Stackelberg differential game with multilevel hierarchy. Moon and Ba\c{s}ar \cite{Moon-Basar18} considered an SLQ Stackelberg {\it mean field game} (MFG) with one leader and arbitrarily large number of followers by fixed-point approach, while recently Wang \cite{Wang24} solved it by a direct approach. Lin et al. \cite{Lin-Jiang-Zhang19} considered LQ Stackelberg differential game of mean-field type stochastic systems. Moon and Yang \cite{Moon20} discussed the time-consistent open-loop solutions for time-inconsistent SLQ mean-field type Stackelberg differential game. Zheng and Shi \cite{Zheng-Shi22} researched a Stackelberg stochastic differential game with asymmetric noisy observations. Feng et al. \cite{Feng-Hu-Huang24} investigated the relationships between zero-sum SLQ Nash and Stackelberg differential game, local versus global information.
	
The research of zero-sum SLQ Nash differential games is very fruitful, such as Mou and Yong \cite{Mou-Yong06}, Sun and Yong \cite{Sun-Yong14}, Yu \cite{Yu15} and the references therein. For zero-sum SLQ Stackelberg differential games, Lin et al. \cite{Lin-Zhang-Siu12} formulated an optimal portfolio selection problem with model uncertainty as a zero-sum stochastic Stackelberg differential game between the investor and the market. Sun et al. \cite{Sun-Wang-Wen23} studied a zero-sum SLQ Stackelberg differential game, where the state coefficients and cost weighting matrices are deterministic. Wu et al. \cite{Wu-Xiong-Zhang24} investigated a zero-sum SLQ Stackelberg differential game with jumps, with random coefficients. Aberkane and Dragan \cite{Aberkane-Dragan23} investigated the zero-sum SLQ mean-field type game with a leader-follower structure.
	
For traditional research on Stackelberg games, most works assume that model parameters can be accurately calibrated without information loss. The decision-makers specify the true probability law of the state process. Due to the basic facts such as parameter uncertainty and model ambiguity, etc., these perfect model assumptions are sometimes noneffective. Thus, it is very meaningful to investigate {\it model uncertainty}. Aghassi and Bertsimas \cite{Aghassi-Bertsimas06} introduced a robust optimization equilibrium, where the players use a robust optimization approach to contend with payoff uncertainty. Hu and Fukushima \cite{Hu-Fikushima13} focused on a class of multi-leader single-follower games under uncertainty with some special structure, where the follower's problem contains only equality constraints. Jimenez and Poznyak \cite{Jimenez-Poznyak06} considered multi-persons LQ differential game with bounded disturbances and uncertainties in finite time. van den Broek et al. \cite{Broek et al03} analyzed the robust equilibria in dynamic games, where players are looking for robustness and take model uncertainty explicitly into account in decisions, and considered the soft-constrained and hard-bounded cases. Huang and Huang \cite{Huang-Huang13} initially considered the robust MFG with a hard constraint, and \cite{Huang-Huang17} studied MFG with model uncertainty, where the uncertainty disturbance is deterministic on the drift term only. Moon and Ba\c{s}ar \cite{Moon-Basar17} studied a risk-sensitive SLQ robust MFG. Xie et al. \cite{Xie-Wang-Huang21} researched a robust SLQ mean field social control by a direct approach. Wang et al. \cite{Wang-Huang-Zhang21} considered  social optimal control of mean field SLQ with uncertainty which is an uncertain drift, while Huang et al. \cite{Huang-Wang-Yong21} studied mean field SLQ social optimum control with volatility uncertainty. Jia et al. \cite{Jia et al23} focused on a robust backward SLQ differential game and team. Feng et al. \cite{Feng-Qiu-Wang24} dealt with an LQ large population system with uncertain volatilities.
	
However, research about Stackelberg differential games with model uncertainty is very lacking. To the best of our knowledge, only Huang et al. \cite{Huang-Wang-Wu22} investigated two types of drift stochastic uncertainties in Stackelberg differential game, where information uncertainty and temporal uncertainty are connected with soft-constraint and hard-constraint min-max control, respectively.

In this paper, we consider a zero-sum SLQ Stackelberg differential game with asymmetric informational uncertainties for two players. Specially, we focus on the drift uncertainty by adding two unknown $L^2$-disturbance $f_1$, $f_2$ to characterize the different model uncertainties and represent the influence from the common environment for the leader and the follower, respectively. This has practical significance. For example, in the market there exists information asymmetry between suppliers and purchasers, who cannot know all the information about the market. The leader, as the dominant player, possesses superior informational ability, so the leader may access more information than the follower. Due to the limited personal abilities and complicated information environment, the follower cannot completely access all information. Thus, $f_1$ is fully observed by the leader and $f_2$ is formalized as an unknown disturbance. Meanwhile, $f_1$ and $f_2$ are both unknown to the follower. We address model uncertainty by the soft-constraint approach (\cite{Broek et al03}, \cite{Huang-Huang17}, \cite{Huang-Wang-Wu22}), by using a cost penalty term for the disturbance and viewing it as an adversarial player. Our main contributions are summarized as follows.

$\bullet$ We formulate a class of zero-sum SLQ Stackelberg differential games where the leader and the follower face different uncertainty sources having hierarchical relationship, and utilize the robust optimization approach to solve the corresponding soft-constraint min-max control and soft-constraint max-min control problems.

$\bullet$ Via convex optimization approach, the ``worst case"  disturbances and two optimal controls for the players are derived.

$\bullet$ The weighting matrix of the follower is allowed to be indefinite, then the leader's problem could still be indefinite.

$\bullet$ By applying the decoupling technique, with the solution to Riccati equations, the state feedback representation of the robust Stackelberg equilibrium is obtained.
	
The rest of this article is organized as follows. Section 2 introduces some preliminary notations and the formulation of zero-sum Stackelberg differential games. Section 3 discusses the informational uncertainty and related robust strategy design for the follower. Section 4 solves the augmented LQ optimal control problem of FBSDE for the leader, to ensure the robust Stackelberg equilibrium. Section 5 provides the state feedback representation of the robust Stackelberg equilibrium. The results are applied to a production supply problem of two producers in the market. Section 6 concludes the paper.

\section{Preliminaries and problem formulation}
	
Let ($\Omega,\mathcal{F},\mathbb{F},\mathbb{P}$) be a complete filtered probability space, on which a standard one-dimensional Brownian motion $W=\left\lbrace W(t),0\leqslant t<\infty\right\rbrace$ is defined, where $\mathbb{F} =\left\lbrace \mathcal{F}_t\right\rbrace _{t\geqslant0} $ is the natural filtration of $W\left(\cdot\right)$ augmented by all the $\mathbb{P}$-null sets in $\mathcal{F}$. Let $\mathbb{R}^n$ denote the $n$-dimensional Euclidean space with Euclidean norm $ \left| \cdot \right| $ and inner product $\left\langle \cdot,\cdot\right\rangle $. The transpose of a vector (or a matrix) $x$ is denoted by $x^\top$. $\mbox{Tr}(A)$ denotes the trace of a square matrix $A$. Let $\mathbb{R}^{n\times m}$ be the Hilbert space consisting of all $(n\times m)$-matrices with the inner product $\langle A,B\rangle :=\mbox{Tr}(AB^\top)$ and the norm $\Vert A \Vert:=\langle A,A\rangle^\frac{1}{2}$. Denote the set of symmetric $n\times n$ matrices with real elements by $\mathbb{S}^n$. If $M\in\mathbb{S}^n$ is positive (semi-)definite, we write $M>(\geqslant) 0$. If there exists a constant $\delta>0$ such that $M\geqslant\delta I$ with the identity matrix $I$, we write $M\gg0$.	
	
Consider a finite time horizon $[0,T]$ for a fixed $T>0$. Let $\mathbb{H}$ be a given Hilbert space. The set of $\mathbb{H}$-valued continuous functions is denoted by $C([0,T];\mathbb{H})$. Let
\begin{equation*}
\begin{aligned}
	L_{\mathbb{F}}^p(0,T;\mathbb{H})&:=\biggl\{\phi :[0,T]\times\Omega \mapsto \mathbb{H}\,\Big|\,\phi\mbox{ is }\mathbb{F}\mbox{-progressively measurable} \\
	&\qquad\quad \mbox{with }\left( \mathbb{E}\int_0^T |\phi(s)|^p ds\right) ^\frac{1}{p} < +\infty \biggr\},\quad p\geq 1, \\
	L_{\mathbb{F}}^\infty(0,T;\mathbb{H})&:=\biggl\{\phi :[0,T]\times \Omega\mapsto\mathbb{H}\,\big|\,\phi\mbox{ is }\mathbb{F}\mbox{-progressively measurable} \\
	&\qquad\quad \mbox{with } \esssup_{s\in[0,T]}\esssup_{\omega\in\Omega}|\phi(s)| < +\infty\biggr\},\\
	L_{\mathbb{F}}^2(\Omega;C([0,T];\mathbb{H}))&:=\biggl\{\phi :[0,T]\times\Omega\mapsto\mathbb{H}\,\big|\,\phi\mbox{ is }\mathbb{F}\mbox{-adapted, continuous} \\
	&\qquad\quad \mbox{with }\mathbb{E}\biggl[ \sup_{s\in[0,T]}|\phi(s)|^2\biggr] < +\infty\biggr\}.
\end{aligned}
\end{equation*}
When a stochastic process reduces to be deterministic, $L_{\mathbb{F}}^2(0,T;\mathbb{H}),L_\mathbb{F}^\infty(0,T;\mathbb{H})$ are denoted by $L^2(0,T;\mathbb{H}),L^\infty(0,T;\mathbb{H})$, respectively. For $\phi\in L_{\mathbb{F}}^2(0,T;\mathbb{H})$ (resp., $L^2(0,T;\mathbb{H})$), we denote
\[
\Vert \phi \Vert_{L^2}:=\left( \mathbb{E}\int_0^T|\phi(s)|^2 ds\right) ^\frac{1}{2}\;\left( \mbox{resp.},\left( \int_0^T|\phi(s)|^2 ds\right)  ^\frac{1}{2}\right).
\]
	
Consider the following controlled linear {\it stochastic differential equation} (SDE) on $[0,T]$:
\begin{equation}\label{state}
\left\{
\begin{aligned}
	\mathrm{d}x(t)=&\ \big[ A(t)x(t)+B_1(t)u_1(t)+B_2(t)u_2(t)+f_1(t)+f_2(t)\big]\mathrm{d}t \\
	&+\big[ C(t)x(t)+D_1(t)u_1(t)+D_2(t)u_2(t)+\sigma(t)\big] \mathrm{d}W(t),  \\
	x(0)=&\ \xi,
\end{aligned}
\right.
\end{equation}	
where $A(\cdot)$, $B_1(\cdot)$, $B_2(\cdot)$, $C(\cdot)$, $D_1(\cdot)$, $D_2(\cdot)$, $\sigma(\cdot)$ are deterministic functions of proper dimensions, $f_1(\cdot),f_2(\cdot)$ are $\mathbb{F}$-adapted processes, $\xi\in\mathbb{R}^n$ is the initial state. The process $u_i(\cdot)$ represents the control of Player $i$, which belongs to the following space:
\begin{equation}
\begin{aligned}
	\mathcal{U}_i[0,T]:=\biggl\{ &u_i :[0,T]\times \Omega\mapsto\mathbb{R}^{m_i}\Big|u_i(\cdot) \mbox{ is }\mathbb{F}\mbox{-progressively measurable}\\
	&\quad \mbox{with }\mathbb{E}\int_0^T|u_i(s)|^2 ds < +\infty \biggr\}, \mbox{ for }i=1,2.
\end{aligned}
\end{equation}
The solution $x(\cdot)\equiv x(\cdot;\xi,u_1,u_2,f_1,f_2)\in\mathbb{R}^n$ of (\ref{state}) is called the \emph{state process} corresponding to $\xi$ and $(u_1(\cdot),u_2(\cdot),f_1(\cdot),f_2(\cdot))$. The criterion for the performance of $u_1(\cdot)$ and $u_2(\cdot)$ is given by the following quadratic functional:
\begin{equation}\label{cost}
\begin{aligned}
	 J(\xi;u_1(\cdot),u_2(\cdot))=&\ \mathbb{E}\bigg\{ \int_0^T\Big[\left\langle Q(t)x(t),x(t)\right\rangle +\left\langle R_1(t)u_1(t),u_1(t)\right\rangle\\
	 &\qquad +\left\langle R_2(t)u_2(t),u_2(t)\right\rangle\Big]\mathrm{d}t+\left\langle Gx(T),x(T)\right\rangle \bigg\},
\end{aligned}
\end{equation}
where $G\in\mathbb{S}^n$, $Q(\cdot)$ and $R_i(\cdot)$ $(i=1,2)$ are symmetric matrix-valued functions.
	
In the Stackelberg game without asymmetric informational uncertainties, player 2 is the leader, who announces his/her control $u_2(\cdot)$ first, and player 1 is the follower, who chooses his/her control as the optimal response accordingly. The criterion functional $J(\xi;u_1(\cdot),u_2(\cdot))$ is regarded as the loss of player 1 and the gain of player 2. For any choice $u_2(\cdot)\in\mathcal{U}_2[0,T]$ of the leader and a fixed initial state $\xi\in\mathbb{R}^n$, the follower would like to choose a $\bar{u}_1(\cdot)\equiv\bar{u}_1[\xi,u_2(\cdot)](\cdot)\in\mathcal{U}_1[0,T]$ such that $J(\xi;u_1(\cdot),u_2(\cdot))$ is minimized. Knowing this, the leader wishes to choose some $\bar{u}_2(\cdot)\in\mathcal{U}_2[0,T]$ so that $J(\xi;\bar{u}_1[\xi,u_2(\cdot)](\cdot),u_2(\cdot))$ is maximized. We refer to such a problem as a \textit{zero-sum SLQ Stackelberg differential game}. The main objective of two players is to find the \textit{Stackelberg equilibrium} of the game, mathematically defined as follows.
	
\begin{mydef}\label{def2.1}
A control pair $(\bar{u}_1(\cdot),\bar{u}_2(\cdot))\in \mathcal{U}_1[0,T] \times\mathcal{U}_2[0,T]$ is called a \textit{Stackelberg equilibrium} of the \textit{zero-sum SLQ Stackelberg differential game} for \textit{the initial state} $\xi$, if
\begin{equation*}
\begin{aligned}
&\inf_{u_1(\cdot)\in\,\mathcal{U}_1[0,T]}J(\xi;u_1(\cdot),\bar{u}_2(\cdot))=J(\xi;\bar{u}_1(\cdot),\bar{u}_2(\cdot))\\
&=\sup_{u_2(\cdot)\in\,\mathcal{U}_2[0,T]} \inf_{u_1(\cdot)\in\,\mathcal{U}_1[0,T]}J(\xi;u_1(\cdot),u_2(\cdot)).
\end{aligned}
\end{equation*}
\end{mydef}
	
We assume that the coefficients of state equation (\ref{state}) and the weighting matrices in cost functional (\ref{cost}) satisfy the following conditions: for $i=1,2$,

{\bf(A1)} $A(\cdot), C(\cdot) \in L^\infty(0,T;\mathbb{R}^{n\times n})$, $B_i(\cdot),D_i(\cdot) \in L^\infty(0,T;\mathbb{R}^{n\times m_i})$, $f_i(\cdot) \in L_{\mathbb{F}}^2(0,T;\mathbb{R}^n)$,
    $\sigma(\cdot) \in L^\infty(0,T;\mathbb{R}^{n})$, $\xi\in\mathbb{R}^n$.

{\bf(A2)} $Q(\cdot) \in L^\infty(0,T;\mathbb{S}^{n})$, $R_i(\cdot) \in L^\infty(0,T;\mathbb{S}^{m_i})$, $G \in\mathbb{S}^n$.
	
Let {\bf(A1)} hold. For any $(\xi,u_1(\cdot),u_2(\cdot),f_1(\cdot),f_2(\cdot)) \in \mathbb{R}^n \times\mathcal{U}_1[0,T] \times\mathcal{U}_2[0,T] \times L_{\mathbb{F}}^2(0,T;\mathbb{R}^{2n})$, there exists a unique strong solution $x(\cdot)\equiv x(\cdot;\xi,u_1,u_2,f_1,f_2)\in L_{\mathbb{F}}^2\left( \Omega;C([0,T];\mathbb{R}^n)\right)$. Then under assumption {\bf(A2)}, the cost functional (\ref{cost}) is well-defined.

In this article, we may suppress many time indices $t$ if it causes no confusion.
	
\section{Robust Stackelberg strategy for the follower}
	
We first introduce the asymmetric informational uncertainties that may connect to some information transmission distinctions between the leader and the follower. In (\ref{state}), the stochastic process $f_i(\cdot)\in L_{\mathbb{F}}^2(0,T;\mathbb{R}^n)$ $(i=1,2)$ are unknown disturbances to characterize the model uncertainty and represent the influence from the common environment for decision-making. Here, the $f_i$ is set to be random, the disturbance, the adversarial will use the sample path information of Brownian motion to play against the leader and the follower, instead of considering a deterministic disturbance. In this article, we think that the leader and the follower confront the possible modeling disturbance in dynamic evolution. As mentioned in the introduction, $f_1$ is fully observed by the leader and $f_2$ is formalized as an unknown disturbance. Meanwhile, $f_1$ and $f_2$ are both unknown to the follower. For any given $u_2(\cdot)$, the follower should make his best response but now faces the uncertain disturbance $f_1$ and $f_2$, which enters the state equation (\ref{state}). The follower may adopt some soft-constraint analysis and his cost functional is
\begin{equation}\label{cost_F1}
\begin{aligned}
	&J_f(f(\cdot);\xi,u_1(\cdot),u_2(\cdot))\\
    =&\ \mathbb{E}\bigg\{\int_0^T \Big[\langle Q(t)x(t),x(t)\rangle +\langle R_1(t)u_1(t),u_1(t)\rangle+\langle R_2(t)u_2(t),u_2(t)\rangle \\
	&\qquad\qquad -\frac{\alpha}{2}\langle R_0(t)f(t),f(t)\rangle \Big] \mathrm{d}t+\langle Gx(T),x(T)\rangle \bigg\},
\end{aligned}
\end{equation}
where the constant $\alpha > 0$ is called the $\textit{attenuation}$ parameter of soft-constraint (see \cite{Huang-Huang17}, \cite{Huang-Wang-Wu22}), and the matrix-valued function $R_0(\cdot)>0$, with $f(\cdot):=f_1(\cdot)+f_2(\cdot)$ in this section of the follower's problem. The follower considers the worse-case analysis and the following inf-sup problem:
	
Problem \textbf{(Inf-Sup)}:$$\inf_{u_1(\cdot)\in\,\mathcal{U}_1[0,T]}\sup_{f(\cdot)\in L_{\mathbb{F}}^2(0,T;\mathbb{R}^n)}J_f(f(\cdot);\xi,u_1(\cdot),u_2(\cdot)),\mbox{ for any $\xi \in\mathbb{R}^n$,  }u_2(\cdot)\in\mathcal{U}_2[0,T].$$

In the following we solve the follower's problem in two steps. We need first tackle the \textit{inner} LQ maximizing problem for disturbance:

Problem \textbf{(LQ-F1)}: To find $\bar{f}(\cdot)\in L_{\mathbb{F}}^2(0,T;\mathbb{R}^n)$ such that
$$
J_f\big(\bar{f}(\cdot);\xi,u_1(\cdot),u_2(\cdot)\big)=\sup_{f(\cdot)\in L_{\mathbb{F}}^2(0,T;\mathbb{R}^n)}J_f(f(\cdot);\xi,u_1(\cdot),u_2(\cdot)),
$$
subject to $\xi \in\mathbb{R}^n$, $(u_1(\cdot),u_2(\cdot))\in\mathcal{U}_1[0,T] \times\mathcal{U}_2[0,T]$ and the state equation (\ref{state}) with $f(\cdot)\equiv f_1(\cdot)+f_2(\cdot)$.

Define a map $\bar{\alpha}_1:\mathbb{R}^{n}\times\mathcal{U}_1[0,T]\times\mathcal{U}_2[0,T]\mapsto L_{\mathbb{F}}^2(0,T;\mathbb{R}^{n})$. We may denote the maximizer $\bar{f}(\cdot)=\bar{\alpha}_1\left[ \xi,u_1(\cdot),u_2(\cdot)\right] (\cdot) $ to emphasize its dependence on the triple $(\xi,u_1(\cdot),u_2(\cdot))$. Denote
$$
J^{wo}(u_1(\cdot);\xi,u_2(\cdot)):=J_f\left( \bar{\alpha}_1\left[ \xi,u_1(\cdot),u_2(\cdot)\right] (\cdot);\xi,u_1,u_2\right).
$$
Here, the superscript ``wo" stands for the worst-case disturbance of the follower (Korn and Menkens \cite{Korn-Menkens05}). Given the mapping $\bar{\alpha}_1$, the follower needs solve the \textit{outer} LQ minimizing problem:
	
Problem \textbf{(LQ-F2)}:
To find $\bar{u}_1(\cdot)\in \mathcal{U}_1[0,T]$ such that
$$
J^{wo}(\bar{u}_1(\cdot);\xi,u_2(\cdot))=\inf_{u_1(\cdot)\in\,\mathcal{U}_1[0,T]}J^{wo}(u_1(\cdot);\xi,u_2(\cdot)),
$$
subject to $\xi \in\mathbb{R}^n$, $u_2(\cdot)\in\mathcal{U}_2[0,T]$. Similarly, denote the minimizer $\bar{u}_1(\cdot)=\bar{\alpha}_2\left[ \xi,u_2(\cdot)\right] (\cdot)$ to indicate its dependence on $\xi$ and $u_2(\cdot)$, where $\bar{\alpha}_2:\mathbb{R}^{n}\times\mathcal{U}_2[0,T]\mapsto \mathcal{U}_1[0,T]$ is a map.
	
\subsection{Problem (\textbf{LQ-F1})}
	
We first deal with the inner LQ maximizing problem. Problem \textbf{(LQ-F1)} can be rewritten as an equivalent problem:
	
Problem \textbf{(LQ-F1a)}:
To find $\bar{f}(\cdot)\in L_{\mathbb{F}}^2(0,T;\mathbb{R}^{n})$ such that
$$
J'_f(\bar{f}(\cdot);\xi,u_1(\cdot),u_2(\cdot))=\inf_{f(\cdot)\in L_{\mathbb{F}}^2(0,T;\mathbb{R}^{n})}J'_f(f(\cdot);\xi,u_1(\cdot),u_2(\cdot)),
$$
subject to $\xi \in\mathbb{R}^n$, $(u_1(\cdot),u_2(\cdot))\in\mathcal{U}_1[0,T] \times\mathcal{U}_2[0,T]$ and the state equation (\ref{state}) with $f(\cdot)\equiv f_1(\cdot)+f_2(\cdot)$, where
\begin{equation*}
\begin{aligned}
J'_f(f(\cdot);\xi,u_1(\cdot),u_2(\cdot))&:=\mathbb{E}\bigg\{ \int_0^T\left[-\left\langle Q(t)x(t),x(t)\right\rangle +\frac{\alpha}{2}\left\langle R_0(t)f(t),f(t)\right\rangle\right] \mathrm{d}t\\
&\qquad -\left\langle Gx(T),x(T)\right\rangle \bigg\}.
\end{aligned}
\end{equation*}

\begin{mydef}
Let $F(g)$ be a \textit{real-valued functional} of $g\in L_{\mathbb{F}}^2(0,T;\mathbb{R}^n)$. If $F(g)\geqslant 0$ for all $g\in L_{\mathbb{F}}^2(0,T;\mathbb{R}^n)$, $F(\cdot)$ is said to be \textit{positive semidefinite}. If furthermore, $F(g)> 0$ for all $g\neq 0$, $F$ is said to be \textit{positive definite}.
\end{mydef}

\begin{mylem}\label{lemma3.1}
Let (A1)-(A2) hold. For any $\xi\in\mathbb{R}^n$ and $(u_1(\cdot),u_2(\cdot))\in \mathcal{U}_1[0,T] \times\mathcal{U}_2[0,T]$, $J'_f(f(\cdot);\xi,u_1(\cdot),u_2(\cdot))$ is convex (resp., strictly convex) in $f(\cdot)\in L_{\mathbb{F}}^2(0,T;\mathbb{R}^{n})$ if and only if $J_1'(h(\cdot))$ is positive semidefinite (resp., positive definite), where
$$
J_1'(h(\cdot)):=\mathbb{E}\left\lbrace \int_0^T\left[-\left\langle Qz'_1,z'_1\right\rangle +\frac{\alpha}{2}\left\langle R_0h,h\right\rangle\right] \mathrm{d}t-\left\langle Gz'_1(T),z'_1(T)\right\rangle \right\rbrace,
$$
and $z'_1(\cdot)$ satisfies:
\begin{equation}\label{auxiliary control P1}
\left\{\begin{aligned}
	\mathrm{d}z'_1(t)&=[Az'_1+h]\mathrm{d}t+Cz'_1\mathrm{d}W ,\\
	z'_1(0)&=0.
\end{aligned}\right.
\end{equation}
\end{mylem}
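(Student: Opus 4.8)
The statement is a standard "convexity of a quadratic functional is governed by its homogeneous quadratic part" result, so the plan is to exploit the affine dependence of the state on the disturbance. First I would fix $\xi$, $u_1(\cdot)$, $u_2(\cdot)$ and decompose the state. For a disturbance $f(\cdot)$ write $x(\cdot;f)$ for the corresponding solution of \eqref{state}. Because \eqref{state} is linear, the map $f\mapsto x(\cdot;f)$ is affine: if I set $x_0(\cdot):=x(\cdot;0)$ (the state with the disturbance switched off, but with $\xi,u_1,u_2,\sigma$ retained), then for any $f$ the difference $x(\cdot;f)-x_0(\cdot)$ solves a linear SDE driven by $f$ with zero initial condition and no $\sigma$ term — that is exactly the auxiliary equation \eqref{auxiliary control P1} with $h=f$. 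Thus $x(\cdot;f)=x_0(\cdot)+z_1'(\cdot)$ where $z_1'$ depends linearly on $f$.

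Next I would substitute this decomposition into $J_f'$ and expand the quadratic forms. The terms $-\langle Qx,x\rangle$ and $-\langle Gx(T),x(T)\rangle$ split, via $x=x_0+z_1'$, into a part quadratic in $z_1'$, a part linear in $z_1'$ (hence linear in $f$), and a part independent of $f$; the penalty $\frac{\alpha}{2}\langle R_0 f,f\rangle$ is already purely quadratic in $f$. Collecting terms I obtain
\begin{equation*}
J_f'(f;\xi,u_1,u_2)=J_1'(f)+L(f)+c,
\end{equation*}
where $J_1'(f)$ is precisely the homogeneous quadratic functional in the statement (with $h=f$, $z_1'=z_1'[f]$), $L(\cdot)$ is a continuous linear functional of $f$ (the cross terms $-2\mathbb{E}\int_0^T\langle Qx_0,z_1'\rangle dt-2\mathbb{E}\langle Gx_0(T),z_1'(T)\rangle$, which are linear in $f$ because $z_1'$ is), and $c=J_f'(0;\xi,u_1,u_2)$ is a constant. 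Here I would invoke the well-posedness noted after (A1)-(A2) to guarantee all expectations are finite and the linear functional $L$ is bounded.

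Finally I would read off convexity from this decomposition. A functional of the form $\Phi(f)=J_1'(f)+L(f)+c$ with $L$ linear and $c$ constant is convex (respectively strictly convex) in $f$ if and only if its quadratic part $J_1'$ is convex (respectively strictly convex); and since $J_1'$ is itself a homogeneous quadratic functional, $J_1'$ is convex exactly when $J_1'(h)\geqslant 0$ for all $h$, i.e. $J_1'$ is positive semidefinite, and strictly convex exactly when $J_1'(h)>0$ for all $h\neq 0$, i.e. positive definite. Concretely, for $f_a,f_b$ and $\lambda\in[0,1]$ the affine-plus-quadratic structure gives
\begin{equation*}
\lambda\Phi(f_a)+(1-\lambda)\Phi(f_b)-\Phi(\lambda f_a+(1-\lambda)f_b)=\lambda(1-\lambda)J_1'(f_a-f_b),
\end{equation*}
so the two-sided equivalence is immediate. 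The one step deserving genuine care, rather than the algebra, is verifying that the cross terms really assemble into a \emph{linear} functional of $f$ and contribute nothing to the second difference: this rests on $z_1'$ being linear in $h=f$ and on $x_0$ being independent of $f$, which is where the linearity of \eqref{state} and the vanishing initial/noise data in \eqref{auxiliary control P1} are used. I expect no serious obstacle beyond bookkeeping; the identity above is the crux.
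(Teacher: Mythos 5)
Your proposal is correct and follows essentially the same route as the paper: the paper's proof likewise exploits the affine dependence of the state on the disturbance and establishes exactly your key identity, $\lambda_1 J'_f(g_1)+\lambda_2 J'_f(g_2)-J'_f(\lambda_1 g_1+\lambda_2 g_2)=\lambda_1\lambda_2 J_1'(g_1-g_2)$, by taking differences of states, from which the equivalence is immediate. Your intermediate decomposition of $J'_f$ into quadratic, linear and constant parts is just a more explicit bookkeeping of the same argument.
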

	
\begin{proof}
For any $\xi\in\mathbb{R}^n$, $(u_1(\cdot),u_2(\cdot)\in \mathcal{U}_1[0,T] \times\mathcal{U}_2[0,T]$, let $x_1(\cdot)$, $x_2(\cdot)$ be the states of (\ref{state}) corresponding $g_1(\cdot)$, $g_2(\cdot)$, respectively. Taking any $\lambda_1\in[0,1]$ and denoting $\lambda_2:=1-\lambda_1$, we get
\begin{equation*}
\begin{aligned}
	&\lambda_1J'_f(g_1(\cdot))+\lambda_2J'_f(g_2(\cdot))-J'_f(\lambda_1g_1(\cdot)+\lambda_2g_2(\cdot))  \\
	&=\lambda_1\lambda_2\mathbb{E}\left\lbrace \int_0^T\left[-\left\langle Q(x_1-x_2),(x_1-x_2)\right\rangle +\frac{\alpha}{2}\left\langle R_0(g_1-g_2),(g_1-g_2)\right\rangle\right] dt-\left| x_1-x_2\right|_G^2 \right\rbrace.
\end{aligned}
\end{equation*}
Denote $h:=g_1-g_2$, $z'_1:=x_1-x_2$. Therefore, $z'_1(\cdot)$ is deterministic and satisfies (\ref{auxiliary control P1}). Hence
$$
\lambda_1J'_f(g_1(\cdot))+\lambda_2J'_f(g_2(\cdot))-J'_f(\lambda_1g_1(\cdot)+\lambda_2g_2(\cdot)) =\lambda_1\lambda_2J_1'(h(\cdot)),
$$
and the lemma follows.
\end{proof}

\begin{Remark}
When the attenuation index $\alpha$ or $R_0(\cdot)$ is sufficiently large, the convexity of $J'_f(f(\cdot);\xi,u_1(\cdot),u_2(\cdot))$ can usually be ensured.
\end{Remark}
	
We need the following assumptions.

{\bf (A3)} $R_0(\cdot)\gg0$.
	
{\bf (A4)} The map $h\mapsto J_1'(h)$ is uniformly positive definite.
	
\begin{mypro}\label{prop-3.1}
Let (A1)-(A4) hold. Let $\xi \in\mathbb{R}^n$, $(u_1(\cdot),u_2(\cdot))\in \mathcal{U}_1[0,T] \times\mathcal{U}_2[0,T]$ be given. Then, Problem (\textbf{LQ-F1}) is (uniquely) solvable if and only if there exists a (unique) triple $(\bar{x}(\cdot),\bar{p}(\cdot),\bar{\beta}(\cdot))$ satisfying the FBSDE:
\begin{equation}\label{robust-1}
\left\{\begin{aligned}
	\mathrm{d}\bar{x}(t)&=\bigg[A\bar{x}+B_1u_1+B_2u_2-\frac{2}{\alpha}R_0^{-1}\bar{p}\bigg]\mathrm{d}t+\big[C\bar{x}+D_1u_1+D_2u_2+\sigma\big]\mathrm{d}W, \\
	\mathrm{d}\bar{p}(t)&=\big[-A^\top\bar{p}-C^\top\bar{\beta}+Q\bar{x}\big]\mathrm{d}t+\bar{\beta}(t)\mathrm{d}W, \\
	\bar{x}(0)&=\xi, \quad \bar{p}(T)=-G\bar{x}(T).
\end{aligned}\right.
\end{equation}
Moreover, the optimal maximizer $\bar{f}(\cdot)$ is given by
\begin{equation}\label{wo-1}
	\bar{f}(t)=-\frac{2}{\alpha}R_0^{-1}(t)\bar{p}(t) , \quad  t\in[0,T].
\end{equation}
\end{mypro}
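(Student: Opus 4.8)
The plan is to treat Problem (\textbf{LQ-F1a}) as a convex stochastic LQ control problem in which the disturbance $f(\cdot)$ plays the role of the control, and to characterize its minimizer through the corresponding stochastic maximum principle. First I would record that, by Lemma \ref{lemma3.1} together with (A4), the functional $f\mapsto J'_f(f(\cdot);\xi,u_1(\cdot),u_2(\cdot))$ is uniformly convex: choosing $\lambda_1=\lambda_2=\tfrac12$ in Lemma \ref{lemma3.1} gives
$$J'_f(g_1)+J'_f(g_2)-2J'_f\Big(\tfrac{g_1+g_2}{2}\Big)=\tfrac12 J_1'(g_1-g_2)\geqslant \tfrac{\delta}{2}\Vert g_1-g_2\Vert_{L^2}^2$$
for some $\delta>0$. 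Being convex and continuous, $J'_f$ is weakly lower semicontinuous, while the quadratic lower bound forces coercivity; hence a unique minimizer $\bar f(\cdot)$ exists. This settles the solvability half of the claimed equivalence, once the first-order condition below is shown to be both necessary and sufficient.

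Next I would derive the optimality system by a perturbation argument. Fix the optimal $\bar f(\cdot)$ with associated state $\bar x(\cdot)$, take $f^\varepsilon=\bar f+\varepsilon h$ for an arbitrary $h\in L_{\mathbb{F}}^2(0,T;\mathbb{R}^n)$, and observe that, since $u_1,u_2$ are held fixed, the variation $z(\cdot):=\tfrac{\mathrm{d}}{\mathrm{d}\varepsilon}x^\varepsilon\big|_{\varepsilon=0}$ solves precisely the auxiliary equation (\ref{auxiliary control P1}) with $h$ as the free term. Differentiating under the expectation gives the Gateaux derivative
$$\frac{\mathrm{d}}{\mathrm{d}\varepsilon}J'_f(f^\varepsilon)\Big|_{\varepsilon=0}=\mathbb{E}\bigg\{\int_0^T\big[-2\langle Q\bar x,z\rangle+\alpha\langle R_0\bar f,h\rangle\big]\mathrm{d}t-2\langle G\bar x(T),z(T)\rangle\bigg\}.$$
I would then introduce the adjoint pair $(\bar p(\cdot),\bar\beta(\cdot))$ as the solution of the backward equation in (\ref{robust-1}), that is $\mathrm{d}\bar p=[-A^\top\bar p-C^\top\bar\beta+Q\bar x]\mathrm{d}t+\bar\beta\,\mathrm{d}W$ with terminal condition $\bar p(T)=-G\bar x(T)$.

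The key step is the duality relation. Applying It\^o's formula to $\langle\bar p,z\rangle$ and using $z(0)=0$ together with $\bar p(T)=-G\bar x(T)$, the drift contributions involving $A$ and $C^\top\bar\beta$ cancel pairwise and one is left with
$$\mathbb{E}\int_0^T\langle Q\bar x,z\rangle\mathrm{d}t+\mathbb{E}\langle G\bar x(T),z(T)\rangle=-\mathbb{E}\int_0^T\langle\bar p,h\rangle\mathrm{d}t.$$
Substituting this into the Gateaux derivative eliminates all $z$-dependence and yields
$$\frac{\mathrm{d}}{\mathrm{d}\varepsilon}J'_f(f^\varepsilon)\Big|_{\varepsilon=0}=\mathbb{E}\int_0^T\big\langle 2\bar p+\alpha R_0\bar f,\,h\big\rangle\mathrm{d}t.$$
Since $J'_f$ is convex, this derivative vanishes for every $h$ if and only if $\bar f$ is the minimizer, which forces $2\bar p+\alpha R_0\bar f=0$ $\mathrm{d}t\times\mathrm{d}\mathbb{P}$-a.e.; as $R_0\gg0$ by (A3), this is exactly (\ref{wo-1}). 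Inserting (\ref{wo-1}) into the state dynamics recovers the forward equation in (\ref{robust-1}), so the optimal triple $(\bar x,\bar p,\bar\beta)$ solves the FBSDE. Conversely, given any solution of (\ref{robust-1}) I would define $\bar f$ by (\ref{wo-1}); the computation above shows its Gateaux derivative vanishes, and uniform convexity promotes this stationary point to the unique global minimizer, which gives the reverse implication.

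I expect the main obstacle to lie less in these computations than in making the ``if and only if'' for \emph{unique} solvability fully rigorous, namely in matching unique solvability of Problem (\textbf{LQ-F1}) with unique solvability of the coupled FBSDE (\ref{robust-1}). The minimizer-to-FBSDE and convexity-to-minimizer directions are routine, but one must rule out a spurious second solution of (\ref{robust-1}) not arising from an optimizer; here the uniform positive-definiteness in (A4) is precisely what guarantees well-posedness of (\ref{robust-1}) and pins down uniqueness on both sides at once. A secondary technical point is justifying the interchange of differentiation and expectation in the Gateaux computation, which follows from the $L^2$-continuous dependence of $x^\varepsilon$ on $\varepsilon$ under (A1).
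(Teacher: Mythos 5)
Your proposal is correct and follows essentially the same route as the paper's own proof: a perturbation/variational argument in $f$, the adjoint pair $(\bar p,\bar\beta)$ defined by the backward equation, It\^o's formula applied to $\langle\bar p,\cdot\rangle$ to obtain the duality identity, and convexity from (A3)--(A4) to upgrade the stationarity condition $2\bar p+\alpha R_0\bar f=0$ to global optimality in both directions of the equivalence. The only differences are cosmetic: you derive the first-order condition explicitly via a Gateaux derivative where the paper simply invokes the stochastic maximum principle for the necessity direction, and you add a direct-method existence argument (uniform convexity plus coercivity) that the paper defers to its subsequent Riccati-equation discussion.
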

	
\begin{proof}
If Problem \textbf{(LQ-F1)} is solvable with an optimal maximizer $\bar{f}(\cdot)$, then by applying the well-known stochastic maximum principle, $\bar{p}(\cdot)$ is indeed the adjoint process. Thus the worst case of the disturbance is based on above Hamiltonian system (\ref{robust-1}).

Conversely, if (\ref{robust-1}) admits a solution $(\bar{x}(\cdot),\bar{p}(\cdot),\bar{\beta}(\cdot))$, then defining $\bar{f}(\cdot)$ by (\ref{wo-1}), for any $\lambda\in\mathbb{R}$, $f(\cdot)=\bar{f}(\cdot)+\lambda\delta f(\cdot)\in L_{\mathbb{F}}^2(0,T;\mathbb{R}^{n})$. Let the resulting solution be denoted by $(\bar{x}(\cdot)+\lambda\delta x(\cdot),\bar{p}(\cdot)+\lambda\delta p(\cdot),\bar{\beta}(\cdot)+\lambda\delta\beta(\cdot))$, where
\begin{equation*}
\left\{\begin{aligned}
	\mathrm{d}\delta x(t)&=[A\delta x+\delta f]\mathrm{d}t+C\delta x\mathrm{d}W, \\
	\mathrm{d}\delta p(t)&=\big[-A^\top\delta p-C^\top\delta\beta+Q\delta x\big]\mathrm{d}t+\delta\beta(t) \mathrm{d}W,   \\
	\delta x(0)&=0,  \quad  \delta p(T)=-G\delta x(T). \end{aligned}\right.
\end{equation*}
Thus,
\begin{equation}\label{diff-1}
\begin{aligned}
	&J'_f(f(\cdot))-J'_f(\bar{f}(\cdot))\\
    =&\ \lambda^2\biggl\{\mathbb{E}\int_0^T\left[-\left\langle Q\delta x,\delta x\right\rangle +\frac{\alpha}{2}\left\langle R_0\delta f,\delta f\right\rangle\right] \mathrm{d}t
    -\mathbb{E}\left\langle G\delta x(T),\delta x(T)\right\rangle \biggr\}  \\
	&+2\lambda\biggl\{\mathbb{E}\int_0^T\left[-\left\langle Q\bar{x},\delta x\right\rangle +\frac{\alpha}{2}\left\langle R_0\bar{f},\delta f\right\rangle\right] \mathrm{d}t
    -\mathbb{E}\left\langle G\bar{x}(T),\delta x(T)\right\rangle \biggr\}.
\end{aligned}
\end{equation}
By applying It\^o's formula to $\left\langle \bar{p}(\cdot),\delta x(\cdot)\right\rangle $, we obtain
\begin{equation}\label{d1}
	-\mathbb{E}\left\langle G\bar{x}(T),\delta x(T)\right\rangle=\mathbb{E}\int_0^T\big[\left\langle Q\bar{x},\delta x\right\rangle +\left\langle \bar{p},\delta f\right\rangle\big] \mathrm{d}t.
\end{equation}
Therefore, by (\ref{diff-1}) and (\ref{d1}),
\begin{equation*}
\begin{aligned}
	&J'_f(f(\cdot))-J'_f(\bar{f}(\cdot))\\
    =&\ \lambda^2\biggl\{\mathbb{E}\int_0^T\left[-\left\langle Q\delta x,\delta x\right\rangle +\frac{\alpha}{2}\left\langle R_0\delta f,\delta f\right\rangle\right] \mathrm{d}t
    -\mathbb{E}\left\langle G\delta x(T),\delta x(T)\right\rangle \biggr\}  \\
	&+2\lambda\mathbb{E}\int_0^T\left\langle \bar{p}+\frac{\alpha}{2}R_0\bar{f},\delta f\right\rangle \mathrm{d}t \\
	=&\ \lambda^2\biggl\{\mathbb{E}\int_0^T\left[-\left\langle Q\delta x,\delta x\right\rangle +\frac{\alpha}{2}\left\langle R_0\delta f,\delta f\right\rangle\right] \mathrm{d}t
-\mathbb{E}\left\langle G\delta x(T),\delta x(T)\right\rangle \biggr\}.
\end{aligned}
\end{equation*}
Due to (A4), hence
$$
J'_f(f(\cdot))\geqslant J'_f(\bar{f}(\cdot)).
$$
The uniqueness part follows easily.
\end{proof}
	
We discuss the solvability of the Hamiltonian system (\ref{robust-1}), which is a fully-coupled FBSDE. We introduce the following Riccati equation:
\begin{equation}\label{R-1}
\begin{aligned}
	\dot{P}_1+P_1A+A^\top P_1-\frac{2}{\alpha}P_1R_0^{-1}P_1+C^\top P_1C-Q=0, \quad P_1(T)=-G,
\end{aligned}
\end{equation}
and the following BSDE:
\begin{equation}\label{B-1}
\left\{\begin{aligned}
	\mathrm{d}\varphi_1(t)&=-\bigg[\bigg(A^\top-\frac{2}{\alpha}P_1R_0^{-1}\bigg)\varphi_1+P_1(B_1u_1+B_2u_2) \\
    &\qquad +C^\top P_1(D_1u_1+D_2u_2)+C^\top P_1\sigma+C^\top\theta_1\bigg]\mathrm{d}t +\theta_1(t)\mathrm{d}W, \\
	\varphi_1(T)&=0.
\end{aligned}\right.
\end{equation}
	
By Theorem 4.5 of Sun et al. \cite{Sun-Li-Yong16}, (A4) can ensure that Riccati equation (\ref{R-1}) admits a unique solution $P_1(\cdot)\in C([0,T];\mathbb{S}^n)$. Then, (\ref{B-1}) admits a solution $(\varphi_1(\cdot),\theta_1(\cdot))\in L_{\mathbb{F}}^2(\Omega;C([0,T];\mathbb{R}^n))\times L_{\mathbb{F}}^2(0,T;\mathbb{R}^n)$. Thus (\ref{robust-1}) is decoupled and solvable (see the Theorem 3.7 and Theorem 4.3 of Chapter 2 in Ma and Yong \cite{Ma-Yong99}). The wellposedness of (\ref{robust-1}) is obtained and further allows a closed-loop representation of the worst disturbance $\bar{f}(\cdot)$ being given
$$
\bar{f}(t)=-\frac{2}{\alpha}R_0^{-1}(t)\big[ P_1(t)\bar{x}(t)+\varphi_1(t)\big], \quad t\in[0,T].
$$
	
\subsection{Problem (\textbf{LQ-F2})}
	
By Proposition \ref{prop-3.1}, the worst disturbance $\bar{f}(\cdot)$ for given $\xi\in\mathbb{R}^n$, $(u_1(\cdot),u_2(\cdot))\in \mathcal{U}_1[0,T] \times\mathcal{U}_2[0,T]$ can be determined. This leaves the outer minimizing Problem (\textbf{LQ-F2}) for the follower:

Problem (\textbf{LQ-F2}): To find $\bar{u}_1(\cdot)\in \mathcal{U}_1[0,T]$, maximizing
\begin{equation}\label{J-wo}
\begin{aligned}
	&J^{wo}(u_1(\cdot);\xi,u_2(\cdot))\equiv\ J_f\big(\bar{f}(\cdot);\xi,u_1(\cdot),u_2(\cdot)\big) \\
	=&\ \mathbb{E}\int_0^T\Big[\left\langle Q(t)x(t),x(t)\right\rangle +\left\langle R_1(t)u_1(t),u_1(t)\right\rangle+\left\langle R_2(t)u_2(t),u_2(t)\right\rangle \\
	&\qquad\quad-\frac{2}{\alpha}\left\langle R_0^{-1}(t)p(t),p(t)\right\rangle\Big]\mathrm{d}t +\mathbb{E}\left\langle Gx(T),x(T)\right\rangle,
\end{aligned}
\end{equation}
subject to the state equation (\ref{robust-1}) which now becomes a FBSDE.

Problem \textbf{(LQ-F2)} is a LQ optimal control problem with FBSDE state. We need to discuss the convexity of (\ref{J-wo}) firstly.
\begin{mypro}
Let (A1)-(A4) hold. For any $(\xi,u_2(\cdot))\in \mathbb{R}^n\times\mathcal{U}_2[0,T]$, $J^{wo}(u_1(\cdot);\xi,u_2(\cdot))$ is convex (resp., strictly convex) in $u_1(\cdot)\in \mathcal{U}_1[0,T]$ if and only if $J_1{''}(v_1(\cdot))$ is positive semidefinite (resp., positive definite), where
\begin{equation*}
\begin{aligned}
	J_1^{''}(v_1(\cdot)):=\mathbb{E}\biggl\{\int_0^T\Big[\left\langle Qz_1{''},z_1{''}\right\rangle +\left\langle R_1v_1,v_1\right\rangle-\frac{2}{\alpha}\left\langle R_0^{-1}p_1,p_1\right\rangle\Big]\mathrm{d}t+\left\langle Gz_1{''}(T),z_1{''}(T)\right\rangle \biggr\},
\end{aligned}
\end{equation*}
and $(z_1^{''}(\cdot),p_1(\cdot))$ satisfies the FBSDE
\begin{equation}\label{auxiliary control P2}
\left\{\begin{aligned}
	\mathrm{d}z_1^{''}(t)&=\Big[Az_1^{''}+B_1v_1-\frac{2}{\alpha}R_0^{-1}p_1\Big]\mathrm{d}t+[Cz_1^{''}+D_1v_1]\mathrm{d}W, \\
	\mathrm{d}p_1(t)&=\big[-A^\top p_1-C^\top\beta_1+Qz_1^{''}\big]\mathrm{d}t+\beta_1(t)\mathrm{d}W,   \\
	z_1^{''}(0)&=0,  \quad  p_1(T)=-Gz_1^{''}(T).
\end{aligned}\right.
\end{equation}
\end{mypro}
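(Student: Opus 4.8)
The plan is to reproduce, over the coupled forward-backward system (\ref{robust-1}), the same convexity-decomposition argument used in the proof of Lemma \ref{lemma3.1}, where the state solved a single SDE. Fix $\xi\in\mathbb{R}^n$ and $u_2(\cdot)\in\mathcal{U}_2[0,T]$, and take two admissible controls $u_1^{(1)}(\cdot),u_1^{(2)}(\cdot)\in\mathcal{U}_1[0,T]$ with corresponding solutions $(x^{(1)},p^{(1)},\beta^{(1)})$ and $(x^{(2)},p^{(2)},\beta^{(2)})$ of (\ref{robust-1}); these exist and are unique by the well-posedness established at the end of Subsection 3.1. For $\lambda_1\in[0,1]$ and $\lambda_2:=1-\lambda_1$, I would first record that, since (\ref{robust-1}) is affine in $u_1$ while $\xi,u_2,\sigma$ are held fixed, the solution map $u_1\mapsto(x,p,\beta)$ is affine; hence the convex combination $\lambda_1u_1^{(1)}+\lambda_2u_1^{(2)}$ produces the triple $\lambda_1(x^{(1)},p^{(1)},\beta^{(1)})+\lambda_2(x^{(2)},p^{(2)},\beta^{(2)})$, and the differences $z_1''(\cdot):=x^{(1)}-x^{(2)}$, $p_1(\cdot):=p^{(1)}-p^{(2)}$, $\beta_1(\cdot):=\beta^{(1)}-\beta^{(2)}$ satisfy precisely the FBSDE (\ref{auxiliary control P2}) driven by $v_1(\cdot):=u_1^{(1)}-u_1^{(2)}$.

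The core computation is then purely algebraic. Applying the elementary identity
$$\lambda_1\langle Ma,a\rangle+\lambda_2\langle Mb,b\rangle-\langle M(\lambda_1a+\lambda_2b),\lambda_1a+\lambda_2b\rangle=\lambda_1\lambda_2\langle M(a-b),a-b\rangle,$$
valid for any $M\in\mathbb{S}^n$ with $\lambda_1+\lambda_2=1$, term by term to the integrand and terminal entry of (\ref{J-wo}), with $M$ taken successively as $Q$, $R_1$, $-\frac{2}{\alpha}R_0^{-1}$ and $G$, I would obtain
$$\lambda_1J^{wo}(u_1^{(1)}(\cdot))+\lambda_2J^{wo}(u_1^{(2)}(\cdot))-J^{wo}(\lambda_1u_1^{(1)}(\cdot)+\lambda_2u_1^{(2)}(\cdot))=\lambda_1\lambda_2J_1''(v_1(\cdot)).$$
Here the $\langle R_2u_2,u_2\rangle$ contribution drops out entirely since $u_2$ is common to all three functionals, the $Q$- and $G$-terms are paired through the state difference $z_1''$, and the $R_0^{-1}$-term through the costate difference $p_1$. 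The stated equivalence is then immediate: the left-hand side is nonnegative for all admissible $u_1^{(1)},u_1^{(2)}$ and all $\lambda_1\in[0,1]$ exactly when $J_1''(v_1)\geqslant0$ for every $v_1$, which is convexity of $J^{wo}$, and strict positivity for $v_1\neq0$ corresponds to strict convexity.

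The only step requiring genuine care — everything else is bookkeeping — is the affine-dependence claim for the \emph{backward} components. Unlike Lemma \ref{lemma3.1}, where the state solved a plain forward SDE and linearity was transparent, here the costate $p$ and the martingale integrand $\beta$ are coupled to $x$ through the two-point boundary condition $p(T)=-Gx(T)$, so I must invoke the decoupling of (\ref{robust-1}) via the Riccati equation (\ref{R-1}) together with the linear BSDE (\ref{B-1}): uniqueness of that decoupled solution is what guarantees the solution operator is genuinely affine, and thus that the difference triple $(z_1'',p_1,\beta_1)$ solves (\ref{auxiliary control P2}) with the terminal link $p_1(T)=-Gz_1''(T)$ inherited from $p^{(i)}(T)=-Gx^{(i)}(T)$. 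Once this linearity is secured, the remaining algebra is routine and the proof closes exactly as in Lemma \ref{lemma3.1}.
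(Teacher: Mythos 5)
Your proof is correct and is essentially the argument the paper intends: the paper states this proposition without proof as the evident analogue of Lemma \ref{lemma3.1}, and your convexity-decomposition identity, applied term by term to $Q$, $R_1$, $-\frac{2}{\alpha}R_0^{-1}$ and $G$ (with the $R_2u_2$ term cancelling), reproduces that argument in the FBSDE setting. You also correctly isolate the one genuinely new point --- that the solution map $u_1\mapsto(x,p,\beta)$ of (\ref{robust-1}) is affine, which requires the uniqueness furnished by the decoupling via (\ref{R-1}) and (\ref{B-1}) under (A4), so that the difference triple solves (\ref{auxiliary control P2}) with the terminal link $p_1(T)=-Gz_1''(T)$.
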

	
For our further existence analysis, we introduce the following Riccati equation:
\begin{equation}\label{riccati}
\left\{\begin{aligned}
	&\dot{P}+PA+A^\top P+C^\top PC+Q\\
    &\ -(PB_1+C^\top PD_1)(R_1+D_1^\top PD_1)^{-1}(B_1^\top P+D_1^\top PC)=0, \\
    &P(T)=G,\quad \widetilde{R}_1:=R_1+D_1^\top PD_1\gg0.
\end{aligned}\right.
\end{equation}

Let us introduce the following assumptions, which will be used later.

{\bf(A5)} (\ref{riccati}) admits a unique strongly regular solution $P\in C([0,T];\mathbb{S}^n)$.
	
{\bf(A6)} The map $v_1\mapsto J_1^{''}(v_1)$ is positive definite.
	
\begin{mypro}\label{prop-3.3}
Let (A1)-(A4) hold and $(\xi,u_2(\cdot))\in \mathbb{R}^n\times\mathcal{U}_2[0,T]$ be fixed. Supposed for any $\delta u_1(\cdot)\in \mathcal{U}_1[0,T]$, the unique adapted solution  $(\delta x(\cdot),\delta p(\cdot),\delta \beta(\cdot),\delta y(\cdot),\delta z(\cdot),\delta q(\cdot))$ of the FBSDEs
\begin{equation}
\left\{\begin{aligned}
	\mathrm{d}\delta x(t)&=\Big[A\delta x+B_1\delta u_1-\frac{2}{\alpha}R_0^{-1}\delta p\Big]\mathrm{d}t+[C\delta x+D_1\delta u_1]\mathrm{d}W, \\
	\mathrm{d}\delta p(t)&=\big[-A^\top\delta p-C^\top\delta\beta+Q\delta x\big]\mathrm{d}t+\delta\beta(t) \mathrm{d}W, \\
	\mathrm{d}\delta y(t)&=\big[-A^\top\delta y-C^\top\delta z+Q\delta x-Q\delta q\big]\mathrm{d}t+\delta z(t)\mathrm{d}W,  \\
	\mathrm{d}\delta q(t)&=\Big[A\delta q+\frac{2}{\alpha}R_0^{-1}\delta y-\frac{2}{\alpha}R_0^{-1}\delta p\Big]\mathrm{d}t+C\delta q\mathrm{d}W, \\
	\delta x(0)&=0, \quad \delta p(T)=-G\delta x(T), \quad \delta y(T)=-G\delta x(T)+G\delta q(T), \quad \delta q(0)=0, \end{aligned}\right.
\end{equation}
satisfies
\begin{equation}\label{convexity condition-1}
	\mathbb{E}\int_0^T\big\langle R_1\delta u_1-B_1^\top\delta y-D_1^\top\delta z,\delta u_1\big\rangle dt\geqslant0.
\end{equation}
Then Problem \textbf{(LQ-F2)} is solvable with $(x(\cdot),p(\cdot),\beta(\cdot),\bar{u}_1(\cdot))$ being an optimal 4-tuple if and only if $(x(\cdot),p(\cdot),\beta(\cdot))$ is the adapted solution to (\ref{robust-1}) corresponding to the triple $(\xi,\bar{u}_1(\cdot),u_2(\cdot))$ and the FBSDE
\begin{equation}\label{adjoint equation-1}
\left\{\begin{aligned}
	\mathrm{d}y(t)&=-\big[A^\top y+C^\top z+Qq-Qx\big]\mathrm{d}t+z(t)\mathrm{d}W,  \\
	\mathrm{d}q(t)&=\Big[Aq+\frac{2}{\alpha}R_0^{-1}y-\frac{2}{\alpha}R_0^{-1}p\Big]\mathrm{d}t+Cq\mathrm{d}W, \\
	y(T)&=Gq(T)-Gx(T), \quad q(0)=0,
\end{aligned}\right.
\end{equation}
admits a unique adapted solution $(y(\cdot),z(\cdot),q(\cdot))$ such that
\begin{equation}\label{bar u_1}
	-R_1(t)\bar{u}_1(t)+B_1^\top(t)y(t)+D_1^\top(t)z(t)=0,\quad t\in[0,T].
\end{equation}
\end{mypro}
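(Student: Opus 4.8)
The plan is to treat Problem (\textbf{LQ-F2}) as a convex optimal control problem whose state is itself the coupled FBSDE (\ref{robust-1}), and to characterize the optimal control through the vanishing of the first-order (G\^ateaux) variation of $J^{wo}$. The key observation is that hypothesis (\ref{convexity condition-1}) is nothing but the reformulation, in terms of the adjoint variables, of the convexity condition $J_1''(\delta u_1)\geqslant0$ from the previous proposition; once convexity is available, the stationarity condition becomes simultaneously necessary and sufficient for optimality, which yields the claimed equivalence.

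First I would carry out the variational analysis. Fix $\bar{u}_1(\cdot)$ with associated $(x,p,\beta)$ solving (\ref{robust-1}), and for an arbitrary direction $\delta u_1(\cdot)\in\mathcal{U}_1[0,T]$ and $\lambda\in\mathbb{R}$ let $(\delta x,\delta p,\delta\beta)$ solve the linear variational FBSDE obtained by differentiating (\ref{robust-1}) along $\delta u_1$; by linearity the perturbed state is $(x+\lambda\delta x,\,p+\lambda\delta p,\,\beta+\lambda\delta\beta)$. Expanding the quadratic functional gives $J^{wo}(\bar{u}_1+\lambda\delta u_1)-J^{wo}(\bar{u}_1)=\lambda^2 J_1''(\delta u_1)+2\lambda\, I(\delta u_1)$, where the second-order coefficient coincides with $J_1''(\delta u_1)$ of the convexity proposition (with $(\delta x,\delta p)$ in the role of $(z_1'',p_1)$) and is nonnegative by (\ref{convexity condition-1}), while the first-order coefficient is $I(\delta u_1)=\mathbb{E}\int_0^T[\langle Qx,\delta x\rangle+\langle R_1\bar{u}_1,\delta u_1\rangle-\frac{2}{\alpha}\langle R_0^{-1}p,\delta p\rangle]\mathrm{d}t+\mathbb{E}\langle Gx(T),\delta x(T)\rangle$. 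Convexity then reduces optimality to the requirement that $I(\delta u_1)=0$ for every admissible direction.

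The heart of the argument is to rewrite $I(\delta u_1)$ as a clean pairing against $\delta u_1$ via the adjoint FBSDE (\ref{adjoint equation-1}). Here $y(\cdot)$ is a backward adjoint to the forward component $\delta x$ and $q(\cdot)$ is a forward adjoint to the backward component $\delta p$; the adjoint system is again forward-backward precisely because the cost penalizes both the forward state $x$ and the backward state $p$. I would apply It\^o's formula to $\langle y,\delta x\rangle$ and to $\langle q,\delta p\rangle$ and add the two resulting expectation identities. The drift terms carrying $A$ and $C$ cancel by transposition; using $\delta x(0)=0$, $q(0)=0$ together with the terminal couplings $\delta p(T)=-G\delta x(T)$ and $y(T)=Gq(T)-Gx(T)$, the cross terms $\frac{2}{\alpha}\langle R_0^{-1}y,\delta p\rangle$ and $\langle Qq,\delta x\rangle$ cancel exactly between the two identities, and the boundary terms combine through $y(T)-Gq(T)=-Gx(T)$. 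The net outcome is the duality identity $\mathbb{E}\langle Gx(T),\delta x(T)\rangle+\mathbb{E}\int_0^T[\langle Qx,\delta x\rangle-\frac{2}{\alpha}\langle R_0^{-1}p,\delta p\rangle]\mathrm{d}t=-\mathbb{E}\int_0^T\langle B_1^\top y+D_1^\top z,\delta u_1\rangle\mathrm{d}t$, so that $I(\delta u_1)=\mathbb{E}\int_0^T\langle R_1\bar{u}_1-B_1^\top y-D_1^\top z,\delta u_1\rangle\mathrm{d}t$. Since $\delta u_1$ ranges over all of $\mathcal{U}_1[0,T]$, the condition $I(\delta u_1)=0$ for every direction is equivalent to the pointwise stationarity relation $-R_1\bar{u}_1+B_1^\top y+D_1^\top z=0$, namely (\ref{bar u_1}), while the existence of $(y,z,q)$ follows from the wellposedness of the linear FBSDE (\ref{adjoint equation-1}); the uniqueness assertion follows analogously once (\ref{convexity condition-1}) is read as a strict inequality.

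I expect the main obstacle to be the duality bookkeeping in the third step. Because both the forward state $x$ and the backward state $p$ enter the cost, the adjoint system must itself be a forward-backward system with a carefully matched coupling and mixed boundary data, and the two It\^o identities have to be arranged so that \emph{all} terms weighted by $\delta x$ and $\delta p$ cancel, leaving only the pairing against $\delta u_1$. Verifying that the precise coefficients and terminal/initial conditions appearing in (\ref{adjoint equation-1}) produce exactly these cancellations — rather than stray residual terms — is where the genuine work lies; the rest is a routine convexity argument.
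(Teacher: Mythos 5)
The paper offers no proof of Proposition \ref{prop-3.3} at all (it is declared ``trivial and omitted''), so there is no official argument to compare against; your variational-plus-duality scheme is evidently the one the authors intend, and its core computations are correct. I verified the three key steps: the expansion $J^{wo}(\bar{u}_1+\lambda\delta u_1)-J^{wo}(\bar{u}_1)=2\lambda I(\delta u_1)+\lambda^2 J_1''(\delta u_1)$; the identification of hypothesis (\ref{convexity condition-1}) with positive semidefiniteness of $J_1''$ (this is the same duality computation, run on $(\delta y,\delta z,\delta q)$ against $(\delta x,\delta p)$); and the It\^o computation on $\langle y,\delta x\rangle+\langle q,\delta p\rangle$, where the $A$- and $C$-terms cancel by transposition, the terms $\frac{2}{\alpha}\langle R_0^{-1}y,\delta p\rangle$ and $\langle Qq,\delta x\rangle$ cancel across the two identities, and the boundary terms collapse via $G=G^\top$ to $-\mathbb{E}\langle Gx(T),\delta x(T)\rangle$, giving exactly $I(\delta u_1)=\mathbb{E}\int_0^T\langle R_1\bar{u}_1-B_1^\top y-D_1^\top z,\delta u_1\rangle\,\mathrm{d}t$. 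From there, sufficiency (stationarity plus convexity implies optimality) and the derivation of (\ref{bar u_1}) from the vanishing of the first variation are exactly as you say.

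The one genuine gap is in the necessity direction, where you assert that ``the existence of $(y,z,q)$ follows from the wellposedness of the linear FBSDE (\ref{adjoint equation-1}).'' Linearity alone does not give wellposedness here: (\ref{adjoint equation-1}) is a \emph{fully coupled} FBSDE ($y$ drives the forward equation for $q$, while $q$ enters both the drift and the terminal datum of the backward equation for $y$), and coupled linear FBSDEs can fail to admit adapted solutions on a prescribed horizon --- this is precisely why the paper devotes Proposition \ref{prop-3.5} and the Riccati equation (\ref{R-2}) to the wellposedness of (\ref{state-1}). Moreover you cannot postpone the point: your dual representation of $I(\delta u_1)$, on which the necessity argument rests, presupposes that $(y,z,q)$ exists. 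The gap is fillable with material already in the paper: posit $y=-P_1q+\eta$ with $P_1$ the solution of (\ref{R-1}); a direct computation shows that the decoupling Riccati equation for (\ref{adjoint equation-1}) is exactly (\ref{R-1}) after this sign flip, including the terminal condition since $-P_1(T)=G$, and (\ref{R-1}) is solvable under (A4) by the cited result of Sun et al.\ \cite{Sun-Li-Yong16}. This reduces (\ref{adjoint equation-1}) to a linear BSDE for $\eta$ (and its diffusion coefficient) with inputs $(x,p)$, followed by a linear SDE for $q$, which yields existence; uniqueness of $(y,z,q)$ then follows by the standard decoupling argument of Ma and Yong \cite{Ma-Yong99}, which the paper already invokes for (\ref{robust-1}). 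With that patch inserted, your proof is complete.
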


\begin{proof}
The proof is trivial and omitted.
\end{proof}
	
\begin{Remark}
We note that when (A6) holds, (\ref{convexity condition-1}) holds automatically.
\end{Remark}
	
\begin{mypro}\label{prop-3.4}
Let (A1)-(A4) hold. Then for any $(\xi,u_2(\cdot))\in \mathbb{R}^n\times\mathcal{U}_2[0,T]$, Problem \textbf{(LQ-F2)} admits a unique optimal control $\bar{u}_1(\cdot)\in\mathcal{U}_1[0,T]$, which is given by
\begin{equation}\label{bar u_1'}
	\bar{u}_1(t)=\widetilde{R}_1^{-1}\big[B_1^\top\bar{y}(t)+D_1^\top\bar{z}(t)-\widetilde{R}_1\bar{x}(t)-D_1^\top PD_2u_2(t)-D_1^\top P\sigma\big],
\end{equation}
where $(\bar{x}(\cdot),\bar{y}(\cdot),\bar{z}(\cdot))$ is solved by the FBSDEs:
\begin{equation}
\left\{\begin{aligned}
	d\bar{x}=&\biggl\{\Big[A-B_1\widetilde{R}_1^{-1}\big(B_1^\top P+D_1^\top PC\big)\Big]\bar{x}+B_1\widetilde{R}_1^{-1}B_1^\top \bar{y} +B_1\widetilde{R}_1^{-1}D_1^\top \bar{z}-\frac{2}{\alpha}R_0^{-1}\bar{p}\\
	&\quad+\Big(B_2-B_1-\frac{2}{\alpha}R_0^{-1}\bar{p}^{-1}D_1^\top PD_2\Big)u_2-B_1\widetilde{R}_1^{-1})D_1^\top P\sigma \biggr\}dt\\
	&\ +\biggl\{\Big[C-D_1\widetilde{R}_1^{-1}\big(B_1^\top P+D_1^\top PC\big)\Big]\bar{x}+D_1\widetilde{R}_1^{-1}B_1^\top \bar{y}+D_1\widetilde{R}_1^{-1}D_1^\top \bar{z} \\
	&\quad+\big(D_2-D_1\widetilde{R}_1^{-1}D_1^\top PD_2\big)u_2+\big(I-D_1\widetilde{R}_1^{-1}D_1^\top P\big)\sigma \biggr\}dW,\\
	d\bar{p}=&-\big[A^\top\bar{p}+C^\top\bar{\beta}-Q\bar{x}\big]dt+\bar{\beta}dW,\\
	d\bar{y}=&-\biggl\{\Big[A-B_1\widetilde{R}_1^{-1}\big(B_1^\top P+D_1^\top PC\big)\Big]^\top\bar{y}+\Big[C-D_1\widetilde{R}_1^{-1}\big(B_1^\top P+D_1^\top PC\big)\Big]^\top\bar{z} \\
	&\qquad+Q\bar{q}+\Big[\big(PB_1+C^\top PD_1\big)\widetilde{R}_1^{-1}D_1^\top PD_2-PB_2-C^\top PD_2\Big]u_2 \\
	&\qquad+\frac{2}{\alpha}PR_0^{-1}\bar{p}-C^\top P\sigma\biggr\}dt+\bar{z}dW,\\
	d\bar{q}=&\biggl\{A\bar{q}+\frac{2}{\alpha}R_0^{-1}\bar{y}-\frac{2}{\alpha}R_0^{-1}\bar{p}-\frac{2}{\alpha}R_0^{-1}P\bar{x}\biggr\}dt+C\bar{q}dW,\\
	\bar{x}(0)&=\xi,\quad\bar{p}(T)=-G\bar{x}(T),\quad\bar{y}(T)=-G\bar{x}(T)+G\bar{q}(T),\quad\bar{q}(0)=0.
\end{aligned}\right.
\end{equation}
\end{mypro}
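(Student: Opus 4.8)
The plan is to combine the abstract optimality characterization of Proposition~\ref{prop-3.3} with a decoupling through the Riccati equation (\ref{riccati}). First I would settle existence and uniqueness of the minimizer: under (A1)--(A4) together with the convexity furnished by (\ref{convexity condition-1}), the functional $J^{wo}(\cdot\,;\xi,u_2(\cdot))$ is (uniformly) convex in $u_1(\cdot)\in\mathcal{U}_1[0,T]$, so the necessary conditions of Proposition~\ref{prop-3.3} become sufficient and Problem (\textbf{LQ-F2}) possesses a unique optimal control $\bar u_1(\cdot)$. By that proposition the optimal tuple $(\bar x,\bar p,\bar\beta,\bar y,\bar z,\bar q,\bar u_1)$ is characterized by the coupled system made of the state FBSDE (\ref{robust-1}), the adjoint FBSDE (\ref{adjoint equation-1}), and the stationarity relation (\ref{bar u_1}), i.e. $R_1\bar u_1=B_1^\top\bar y+D_1^\top\bar z$. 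The remaining task is representational: to turn this system into the closed FBSDE of the statement and to read off the feedback (\ref{bar u_1'}).

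The difficulty is that (\ref{bar u_1}) is \emph{implicit} in $\bar u_1$, because the control enters the state diffusion through $D_1\bar u_1$ and therefore influences the martingale integrand $\bar z$ of $\bar y$, which is exactly what sits on the right-hand side of (\ref{bar u_1}); moreover $R_1$ is allowed to be indefinite, so it cannot be inverted directly. The device is to use the solution $P(\cdot)$ of the Riccati equation (\ref{riccati}), whose solvability and strong regularity are supplied by (A5), to expose the part of $\bar z$ that carries $\bar u_1$. Concretely, applying It\^o's formula to the $P$-weighted bilinear combination of the forward variables of (\ref{robust-1})--(\ref{adjoint equation-1}) produces, inside $D_1^\top\bar z$, a contribution $D_1^\top PD_1\bar u_1$; transposing it to the left of (\ref{bar u_1}) replaces the indefinite $R_1$ by $\widetilde R_1=R_1+D_1^\top PD_1$. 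The Riccati condition forces $\widetilde R_1\gg0$, hence it is invertible, and solving for $\bar u_1$ gives the feedback (\ref{bar u_1'}), the residual linear terms in $\bar x$, $u_2$ and $\sigma$ being the $P$-weighted pieces of the state diffusion.

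With the feedback in hand I would substitute (\ref{bar u_1'}) into the drift and diffusion of (\ref{robust-1}) and into the adjoint FBSDE (\ref{adjoint equation-1}) and regroup coefficients — in particular forming the closed-loop matrices $C-D_1\widetilde R_1^{-1}(B_1^\top P+D_1^\top PC)$ and its drift analogue — which yields exactly the four-equation FBSDE of the statement for $(\bar x,\bar p,\bar y,\bar q)$ with the stated coupled boundary data. Finally I would confirm well-posedness of this closed system: it is a linear FBSDE of the same type treated for (\ref{robust-1}) in Section~3.1, so existence and uniqueness of the adapted solution $(\bar x,\bar p,\bar\beta,\bar y,\bar z,\bar q)$ follow from the solvability of (\ref{riccati}) and the linear FBSDE theory of Ma and Yong~\cite{Ma-Yong99}; the optimal control is then the $\mathcal{U}_1[0,T]$-process defined by (\ref{bar u_1'}).

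The step I expect to be the main obstacle is the resolution of the implicit stationarity relation, that is, pinning down the precise $P$-weighted combination whose It\^o differential isolates the $D_1^\top PD_1\bar u_1$ term in $\bar z$ so that $\widetilde R_1$ emerges and can be inverted. Everything after that is careful but routine bookkeeping of the coefficient matrices.
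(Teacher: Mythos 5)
Your overall route is the one the paper intends (the paper itself omits the proof as ``trivial''): invoke Proposition \ref{prop-3.3}, resolve the implicit stationarity relation (\ref{bar u_1}) through the Riccati solution $P(\cdot)$ of (\ref{riccati}), and substitute the resulting feedback into (\ref{robust-1}) and (\ref{adjoint equation-1}) to obtain the closed system. Your identification of the key mechanism is also correct in substance, though the transformation is not a ``bilinear combination of the forward variables'': it is the affine shift $\bar{y}:=y+P\bar{x}$ of the backward variable, whose It\^o differential forces $\bar{z}=z+P(C\bar{x}+D_1\bar{u}_1+D_2u_2+\sigma)$; inserting $y=\bar{y}-P\bar{x}$ and this expression for $z$ into $R_1\bar{u}_1=B_1^\top y+D_1^\top z$ moves $D_1^\top PD_1\bar{u}_1$ to the left-hand side, produces $\widetilde{R}_1=R_1+D_1^\top PD_1\gg0$, and yields (\ref{bar u_1'}) (modulo a typo in the paper's display: $\widetilde{R}_1\bar{x}$ there should read $(B_1^\top P+D_1^\top PC)\bar{x}$, consistently with (\ref{cost_L1})). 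You are also right, and more careful than the statement itself, in noting that (A5) must be added to (A1)--(A4) --- neither $P$ nor $\widetilde{R}_1$ exists without it --- and that strict convexity ((A6), or (\ref{convexity condition-1}) strengthened) is what delivers uniqueness.

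The genuine gap is your final step. You assert that existence and uniqueness of the adapted solution to the closed four-equation FBSDE follow ``from the solvability of (\ref{riccati}) and the linear FBSDE theory of Ma and Yong,'' on the grounds that it is of the same type as (\ref{robust-1}). It is not. The system (\ref{robust-1}) is decoupled by the $n\times n$ Riccati equation (\ref{R-1}), whose solvability is guaranteed by (A4) via Sun--Li--Yong; by contrast, the closed system of Proposition \ref{prop-3.4} (equivalently, (\ref{state-1}) after stacking $\bar{X}=(\bar{x}^\top,\bar{q}^\top)^\top$, $\bar{Y}=(\bar{y}^\top,\bar{p}^\top)^\top$) is a fully coupled FBSDE in dimension $2n$, with $\bar{y},\bar{z},\bar{p}$ entering the forward drifts and $\bar{x},\bar{q}$ entering the backward drifts, and linear FBSDE theory yields its well-posedness only through a new decoupling Riccati equation, namely the $2n\times2n$ equation (\ref{R-2}). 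The solvability of (\ref{R-2}) does not follow from (\ref{riccati}) or from (A1)--(A5); this is exactly why the paper devotes Proposition \ref{prop-3.5} and assumption (A7)(i) to it. Consequently your argument establishes the characterization and the feedback representation, but not the existence half of the claim: existence remains conditional on (\ref{R-2}) --- a defect which, to be fair, the paper's own statement of Proposition \ref{prop-3.4} shares, since it too asserts existence and uniqueness under (A1)--(A4) alone and then quietly defers well-posedness to Proposition \ref{prop-3.5}.
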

	
Hence, the above result characterizes the Hamiltonian system of Problem \textbf{(LQ-F2)} which is a coupled system of high-dimensional FBSDEs:
\begin{equation}\label{state-1}
\left\{\begin{aligned}
	\mathrm{d}\bar{X}(t)&=\big[\hat{A}_1\bar{X}+\hat{B}_1\bar{Y}+\hat{B}_3\bar{Z}+\hat{B}_2u_2+\hat{b}\big]\mathrm{d}t +\big[\hat{C}\bar{X}+\hat{D}_1\bar{Y}+\hat{D}_3\bar{Z}+\hat{D}_2u_2+\hat{\sigma}\big]\mathrm{d}W, \\
	\mathrm{d}\bar{Y}(t)&=\big[-\hat{A}_2^\top\bar{Y}-\hat{C}^\top\bar{Z}+\hat{Q}\bar{X}+\hat{F}u_2+\hat{v}\big]\mathrm{d}t +\bar{Z}(t)\mathrm{d}W, \\
	\bar{X}(0)&=\hat{\xi}, \quad \bar{Y}(T)=\hat{G}\bar{X}(T),
\end{aligned}\right.
\end{equation}
where by (\ref{robust-1}), (\ref{adjoint equation-1}) and (\ref{bar u_1}), we have denoted
\begin{equation*}
\left\{\begin{aligned}
	&\bar{X}:=(\bar{x}^\top,\bar{q}^\top)^\top, \quad \bar{Y}:=(\bar{y}^\top,\bar{p}^\top)^\top, \quad \bar{Z}:=(\bar{z}^\top,\bar{\beta}^\top)^\top, \\
	&\hat{A}_1:=
	\begin{pmatrix}
		A-B_1\widetilde{R}_1^{-1}\big(B_1^\top P+D_1^\top PC\big) & 0 \\
		-\frac{2}{\alpha}R_0^{-1}P & A
	\end{pmatrix}, \\
	&\hat{A}_2:=
	\begin{pmatrix}
		A-B_1\widetilde{R}_1^{-1}\big(B_1^\top P+D_1^\top PC\big) & 0 \\
		\frac{2}{\alpha}R_0^{-1}P & A
	\end{pmatrix}, \\
	 &\hat{C}:=
	 \begin{pmatrix}
	 	C-D_1\widetilde{R}_1^{-1}\big(B_1^\top P+D_1^\top PC\big) & 0 \\
	 	0 & C
	 \end{pmatrix}, \\ &\hat{B}_1:=
	 \begin{pmatrix}
	 	B_1\widetilde{R}_1^{-1}B_1^\top & -\frac{2}{\alpha}R_0^{-1} \\
	 	\frac{2}{\alpha}R_0^{-1} & -\frac{2}{\alpha}R_0^{-1}
	 \end{pmatrix}, \quad
	\hat{B}_3:=
	\begin{pmatrix}
		B_1\widetilde{R}_1^{-1}D_1^\top & 0 \\
		0 & 0
	\end{pmatrix}, \\
\end{aligned}\right.
\end{equation*}
and
\begin{equation*}
\left\{\begin{aligned}
	&\hat{B}_2:=
	\begin{pmatrix}
		B_2-B_1\widetilde{R}_1^{-1}D_1^\top PD_2 \\0
	\end{pmatrix}, \quad
	\hat{D}_1:=
	\begin{pmatrix}
			D_1\widetilde{R}_1^{-1}B_1^\top & 0 \\
			0 & 0
	\end{pmatrix}, \\
	&\hat{D}_3:=
	\begin{pmatrix}
		D_1\widetilde{R}_1^{-1}D_1^\top & 0 \\
		0 & 0
	\end{pmatrix}, \quad
	\hat{D}_2:=
	\begin{pmatrix}
		D_2-D_1\widetilde{R}_1^{-1}D_1^\top PD_2 \\0
	\end{pmatrix}, \\
    &\hat{b}:=
	\begin{pmatrix}
		-B_1\widetilde{R}_1^{-1}D_1^\top P\sigma \\ 0
	\end{pmatrix}, \quad
	\hat{\sigma}:=
	\begin{pmatrix}
		\big(I-D_1\widetilde{R}_1^{-1}D_1^\top P\big)\sigma \\ 0
	\end{pmatrix}, \\
	&\hat{v}:=
	\begin{pmatrix}
		C^\top P\sigma \\ 0
	\end{pmatrix}, \quad
	\hat{\xi}:=
	\begin{pmatrix}
		\xi \\ 0
	\end{pmatrix},\\
	&\hat{F}:=
	\begin{pmatrix}
		-\big(PB_1+C^\top PD_1\big)\widetilde{R}_1^{-1}D_1^\top PD_2+PB_2+C^\top PD_2 \\ 0
	\end{pmatrix},\\
	&\hat{Q}:=
	\begin{pmatrix}
		0 & -Q \\
		Q & 0
	\end{pmatrix}, \quad
	\hat{G}:=
	\begin{pmatrix}
		-G & G \\
		-G & 0
	\end{pmatrix}.
\end{aligned}\right.
\end{equation*}

\begin{proof}
	The proof is trivial and omitted.
\end{proof}

\begin{mypro}\label{prop-3.5}
Let (A1)-(A2), (A5) hold and $u_2(\cdot)\in\mathcal{U}_2[0,T]$ be fixed. Supposed the Riccati equation
\begin{equation}\label{R-2}
\left\{\begin{aligned}
	&\dot{P}_2+P_2\hat{A}_1+\hat{A}_2^\top P_2+P_2\hat{B}_1P_2-\hat{Q}\\
    &\ +(\hat{C}^\top+P_2\hat{B}_3)(I-P_2\hat{D}_3)^{-1}(P_2\hat{C}+P_2\hat{D}_1P_2)=0, \\
	&P_2(T)=\hat{G},
\end{aligned}\right.
\end{equation}
and the following BSDE:
\begin{equation}\label{B-2}
\left\{\begin{aligned}
	\mathrm{d}\varphi_2(t)=&-\biggl\{\left( \hat{A}_2^\top+P_2\hat{B}_1+(\hat{C}^\top+P_2\hat{B}_3)(I-P_2\hat{D}_3)^{-1}P_2\hat{D}_1\right) \varphi_2 \\
	&\qquad +(\hat{C}^\top+P_2\hat{B}_3)(I-P_2\hat{D}_3)^{-1}(P_2\hat{D}_2u_2+P_2\hat{\sigma}+\theta_2)\\
    &\qquad +(P_2\hat{B}_2-\hat{F})u_2+P_2\hat{b}-\hat{v} \biggr\}\mathrm{d}t+\theta_2(t)\mathrm{d}W, \\
	\varphi_2(T)=&\ 0,
\end{aligned}\right.
\end{equation}
admit solutions $P_2(\cdot)\in C([0,T];\mathbb{R}^{2n\times2n})$, $(\varphi_2(\cdot),\theta_2(\cdot))\in L_{\mathbb{F}}^2(\Omega;C([0,T];\mathbb{R}^{2n}))\times L_{\mathbb{F}}^2(0,T;\mathbb{R}^{2n})$, respectively. Then, the wellposedness of (\ref{state-1}) follows.
\end{mypro}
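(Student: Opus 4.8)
The plan is to apply the standard decoupling (ansatz) technique for fully-coupled linear FBSDEs: I posit the affine relation $\bar{Y}(t)=P_2(t)\bar{X}(t)+\varphi_2(t)$ and verify that, under the stated solvability of (\ref{R-2}) and (\ref{B-2}), this relation reduces (\ref{state-1}) to a decoupled, uniquely solvable forward SDE for $\bar{X}(\cdot)$ alone.

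First I would apply It\^o's formula to $P_2(t)\bar{X}(t)+\varphi_2(t)$ along the forward dynamics of $\bar{X}$ in (\ref{state-1}) and match the resulting diffusion against $\bar{Z}$. Substituting $\bar{Y}=P_2\bar{X}+\varphi_2$, this gives the algebraic relation
\begin{equation*}
(I-P_2\hat{D}_3)\bar{Z}=(P_2\hat{C}+P_2\hat{D}_1P_2)\bar{X}+P_2\hat{D}_1\varphi_2+P_2\hat{D}_2u_2+P_2\hat{\sigma}+\theta_2.
\end{equation*}
Since the Riccati equation (\ref{R-2}) already contains $(I-P_2\hat{D}_3)^{-1}$, its solvability presupposes the invertibility of $I-P_2\hat{D}_3$ on $[0,T]$, so I can solve for $\bar{Z}$ explicitly as an affine function of $\bar{X}$ with coefficients built from $P_2,\varphi_2,\theta_2,u_2$. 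Substituting $\bar{Y}=P_2\bar{X}+\varphi_2$ together with this closed form of $\bar{Z}$ into the backward drift of (\ref{state-1}) and matching with the drift produced by It\^o's formula, the terms multiplying $\bar{X}$ collapse to precisely the left-hand side of (\ref{R-2}), while the residual inhomogeneous terms reproduce exactly the generator of the BSDE (\ref{B-2}) for $(\varphi_2,\theta_2)$. Hence the ansatz is consistent under the stated hypotheses.

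Next I would substitute $\bar{Y}=P_2\bar{X}+\varphi_2$ and the closed form of $\bar{Z}$ back into the forward equation of (\ref{state-1}), yielding a decoupled linear SDE for $\bar{X}(\cdot)$ whose homogeneous coefficients are bounded: $P_2\in C([0,T];\mathbb{R}^{2n\times2n})$ is bounded, $(I-P_2\hat{D}_3)^{-1}$ is continuous hence bounded, and $\widetilde{R}_1^{-1},R_0^{-1}$ are bounded by (A3) and (A5); the inhomogeneous terms, built from $\varphi_2,\theta_2,u_2,\hat{b},\hat{\sigma},\hat{v}$, lie in $L^2_{\mathbb{F}}$. By the classical existence-uniqueness theory for linear SDEs with bounded coefficients and $L^2$ inhomogeneities, this equation admits a unique solution $\bar{X}(\cdot)\in L^2_{\mathbb{F}}(\Omega;C([0,T];\mathbb{R}^{2n}))$. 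Defining $\bar{Y}$ and $\bar{Z}$ through the ansatz and the algebraic relation and reversing the It\^o computation shows that $(\bar{X},\bar{Y},\bar{Z})$ solves (\ref{state-1}); the terminal condition $\bar{Y}(T)=\hat{G}\bar{X}(T)$ follows from $P_2(T)=\hat{G}$ and $\varphi_2(T)=0$. Uniqueness holds because any adapted solution of (\ref{state-1}) is forced to obey the same decoupling relation and therefore coincides with the constructed triple.

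The only delicate point is the invertibility of $I-P_2(t)\hat{D}_3$ throughout $[0,T]$, which makes both the explicit expression for $\bar{Z}$ and the algebraic matching legitimate; this is guaranteed by the very solvability hypothesis imposed on (\ref{R-2}), where $(I-P_2\hat{D}_3)^{-1}$ appears. The remaining effort is the coefficient bookkeeping that identifies the $\bar{X}$-terms with (\ref{R-2}) and the residual terms with (\ref{B-2}); this is notationally heavy owing to the $2n\times2n$ block structure but otherwise entirely routine.
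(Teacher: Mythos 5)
Your proof is correct and follows exactly the approach the paper itself indicates but omits the details of: applying It\^o's formula to $P_2(\cdot)\bar{X}(\cdot)+\varphi_2(\cdot)$ and comparing coefficients with (\ref{state-1}), so that the diffusion matching yields the algebraic relation for $\bar{Z}$, the $\bar{X}$-terms collapse to the Riccati equation (\ref{R-2}), and the residual terms reproduce the BSDE (\ref{B-2}). Your construction of the decoupled forward SDE and the verification of the terminal condition via $P_2(T)=\hat{G}$, $\varphi_2(T)=0$ is precisely the intended argument.
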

		
\begin{proof}
The main idea of the proof is applying It\^o's formula to $P_2(\cdot)\bar{X}(\cdot)+\varphi_2(\cdot)$ and comparing the coefficients with (\ref{state-1}), we omit it here.
\end{proof}
		
\section{Robust Stackelberg strategy for the leader}
		
Given Proposition \ref{prop-3.4} and Proposition \ref{prop-3.5}, if Riccati equation (\ref{R-2}) is solvable, the best response of follower can be characterized for any given $(u_2(\cdot),\xi)\in\mathcal{U}_2[0,T]\times\mathbb{R}^n$. Now we turn to the optimal control problem from the viewpoint of the leader to have robust design. Because of unknown process $f_2(\cdot)$, we approach the SLQ problem from a robust optimation viewpoint where $f_2(\cdot)$ is treated as an adversarial (minimizing) player. Here, a soft-constraint for the disturbance is adapted in that the term $\frac{\gamma}{2}\langle\hat{R_0}f_2,f_2\rangle$ is included in the following
\begin{equation}\label{cost_L1}
\begin{aligned}
 	&\tilde{J}_f(f_2(\cdot);\xi,u_2(\cdot))\equiv J_f(f_2(\cdot);\xi,\bar{u}_1(\cdot),u_2(\cdot)) \\
	&=\mathbb{E}\bigg\{\int_0^T \Big[\langle Qx,x\rangle +\Big\langle R_1\widetilde{R}_1^{-1}\big[B_1^\top(t)\bar{y}(t)+D_1^\top(t)\bar{z}(t) \\
	&\qquad\quad-(B_1^\top P+D_1^\top PC)\bar{x}(t)-D_1^\top PD_2u_2(t)-D_1^\top P\sigma\big],\widetilde{R}_1^{-1}\big[B_1^\top(t)\bar{y}(t)\\
    &\qquad\quad+D_1^\top(t)\bar{z}(t)-(B_1^\top P+D_1^\top PC)\bar{x}(t)-D_1^\top PD_2u_2(t)-D_1^\top P\sigma\big]\Big\rangle \\
	&\qquad\quad +\langle R_2u_2,u_2\rangle+\frac{\gamma}{2}\langle \hat{R}_0f_2,f_2\rangle \Big] \mathrm{d}t+\langle Gx(T),x(T)\rangle \bigg\},
\end{aligned}
\end{equation}
while $f_2(\cdot)$ attempts to minimize the functional $\tilde{J}_f(f_2(\cdot);\xi,u_2(\cdot))$. Note that $\gamma$ is called the \textit{attenuation} parameter of soft-constraint (\cite{Huang-Huang17}, \cite{Huang-Wang-Wu22}).

Summarizing the above mentioned discussion, because of the informational uncertainty, the leader considers the worst-case analysis and confronts the following sup-inf problem:
		
Problem \textbf{(Sup-Inf)}:
$$
\sup_{u_2(\cdot)\in\,\mathcal{U}_2[0,T]}\inf_{f_2(\cdot)\in L_{\mathbb{F}}^2(0,T;\mathbb{R}^n)}\tilde{J}_f(f_2(\cdot);\xi,u_2(\cdot)),\quad \mbox{ for any }\xi\in\mathbb{R}^n.
$$
Given the above mentioned sup-inf formulation, we need first tackle the inner LQ minimizing problem for disturbance:
		
Problem \textbf{(LQ-L1)}: To find $\bar{f}_2(\cdot)\in L_{\mathbb{F}}^2(0,T;\mathbb{R}^{n})$ such that
$$
\tilde{J}_f(\bar{f}_2(\cdot);\xi,u_2(\cdot))=\inf_{f_2(\cdot)\in L_{\mathbb{F}}^2(0,T;\mathbb{R}^n)}\tilde{J}_f(f_2(\cdot);\xi,u_2(\cdot)),
$$
subject to $\xi \in\mathbb{R}^{n}$, $u_2(\cdot)\in\mathcal{U}_2[0,T]$.

Define a map $\bar{\alpha}_3:\mathbb{R}^{n}\times\mathcal{U}_2[0,T]\mapsto L_{\mathbb{F}}^2(0,T;\mathbb{R}^{n})$. We may denote the optimal minimizer $\bar{f}_2(\cdot)\equiv\bar{\alpha}_3\left[ \xi,u_2(\cdot)\right] (\cdot) $, which depends on the tuple $(\xi ,u_2(\cdot))$. Define
$$
\tilde{J}^{wo}(u_2(\cdot);\xi)=\tilde{J}_f(\bar{\alpha}_3\left[ \xi,u_2(\cdot)\right] (\cdot);\xi,u_2(\cdot)).
$$
Given the mapping $\bar{\alpha}_3$, the leader needs solve the outer SLQ problem:

Problem \textbf{(LQ-L2)}: To find $\bar{u}_2(\cdot)\in \mathcal{U}_2[0,T]$ such that
$$
\tilde{J}^{wo}(\bar{u}_2(\cdot);\xi)=\sup_{u_2(\cdot)\in\,\mathcal{U}_2[0,T]}\tilde{J}^{wo}(u_2(\cdot);\xi),\quad \mbox{subject to }\xi \in\mathbb{R}^n.
$$		
		
\subsection{Problem (\textbf{LQ-L1})}
		
We first deal with the inner minimizing problem, namely \textbf{(LQ-L1)}, the state is given by the following FBSDEs:
\begin{equation}\label{state L1}
\left\{\begin{aligned}
	dx&=\Big\{Ax-B_1\widetilde{R}_1^{-1}\big(B_1^\top P+D_1^\top PC\big)\bar{x}+B_1\widetilde{R}_1^{-1}B_1^\top\bar{y}+B_1\widetilde{R}_1^{-1}D_1^\top\bar{z} \\
	&\qquad+\big(B_2-B_1\widetilde{R}_1^{-1}D_1^\top PD_2\big)u_2+f_2+f_1-B_1\widetilde{R}_1^{-1}D_1^\top P\sigma\Big\}dt \\
	&\qquad +\Big\{Cx-D_1\widetilde{R}_1^{-1}\big(B_1^\top P+D_1^\top PC\big)\bar{x}+D_1\widetilde{R}_1^{-1}B_1^\top\bar{y}+D_1\widetilde{R}_1^{-1}D_1^\top\bar{z} \\
	&\qquad+\big(D_2-D_1\widetilde{R}_1^{-1}D_1^\top PD_2\big)u_2+\big(I-D_1\widetilde{R}_1^{-1}D_1^\top P\big)\sigma\Big\}dW, \\				
    d\bar{x}=&\Big\{\big[\big(A-B_1\widetilde{R}_1^{-1}\big(B_1^\top P+D_1^\top PC\big)\big]\bar{x}+B_1\widetilde{R}_1^{-1}B_1^\top \bar{y}+B_1\widetilde{R}_1^{-1}D_1^\top \bar{z} \\
    &\quad-\frac{2}{\alpha}R_0^{-1}\bar{p}+\big(B_2-B_1\widetilde{R}_1^{-1}D_1^\top PD_2\big)u_2-B_1\widetilde{R}_1^{-1})D_1^\top P\sigma \Big\}dt \\
    &\ +\Big\{\big[C-D_1\widetilde{R}_1^{-1}\big(B_1^\top P+D_1^\top PC\big)\big]\bar{x}+D_1\widetilde{R}_1^{-1}B_1^\top \bar{y}+D_1\widetilde{R}_1^{-1}D_1^\top \bar{z} \\
    &\quad+\big(D_2-D_1\widetilde{R}_1^{-1}D_1^\top PD_2\big)u_2+\big(I-D_1\widetilde{R}_1^{-1}D_1^\top P\big)\sigma \Big\}dW,\\
    d\bar{p}=&-\big[A^\top\bar{p}+C^\top\bar{\beta}-Q\bar{x}\big]dt+\bar{\beta}dW,\\
    d\bar{y}=&-\Big\{\big[A-B_1\widetilde{R}_1^{-1}\big(B_1^\top P+D_1^\top PC\big)\big]^\top\bar{y}+\big[C-D_1\widetilde{R}_1^{-1}\big(B_1^\top P+D_1^\top PC\big)\big]^\top\bar{z} \\
    &\qquad+Q\bar{q}+\big[\big(PB_1+C^\top PD_1\big)\widetilde{R}_1^{-1}D_1^\top PD_2-PB_2+C^\top PD_2\big]u_2 \\
    &\qquad+\frac{2}{\alpha}PR_0^{-1}\bar{p}-C^\top P\sigma\Big\}dt+\bar{z}dW,\\
    d\bar{q}=&\Big[A\bar{q}+\frac{2}{\alpha}R_0^{-1}\bar{y}-\frac{2}{\alpha}R_0^{-1}\bar{p}-\frac{2}{\alpha}R_0^{-1}P\bar{x}\Big]dt+C\bar{q}dW,\\
    x(0)&=\xi,\quad\bar{x}(0)=\xi,\quad\bar{p}(T)=-G\bar{x}(T),\quad\bar{y}(T)=-G\bar{x}(T)+G\bar{q}(T),\quad\bar{q}(0)=0,
\end{aligned}\right.
\end{equation}
and the cost functional is given by (\ref{cost_L1}). For any given  $(u_2(\cdot),\xi)\in\mathcal{U}_2[0,T]\times\mathbb{R}^n$, from Proposition \ref{prop-3.5}, the wellposedness of (\ref{state-1}) can be ensured, which admits a unique solution $(\bar{X}(\cdot),\bar{Y}(\cdot),\bar{Z}(\cdot))$.

To solve Problem \textbf{(Sup-Inf)}, we assume

{\bf(A7)} (\romannumeral1) The Riccati equation (\ref{R-2}) admits a solution $P_2(\cdot)$.\quad (\romannumeral2) $\hat{R}_0(\cdot)\gg0$.\\
		(\romannumeral3) The map $g\mapsto J'_2(g)$ is positive definite.
		
Note that under (A7) (\romannumeral1), Problem \textbf{(LQ-L1)} can be rewritten as an equivalent problem:
		
Problem \textbf{(LQ-L1a)}: To find $\bar{f}_2(\cdot)\in L_{\mathbb{F}}^2(0,T;\mathbb{R}^n)$ such that
$$
\tilde{J}'_f(\bar{f}_2(\cdot);\xi,u_2(\cdot))=\inf_{f_2(\cdot)\in L_{\mathbb{F}}^2(0,T;\mathbb{R}^n)}\tilde{J}'_f(f_2(\cdot);\xi,u_2(\cdot)),
$$
subject to $\xi \in\mathbb{R}^n$, $u_2(\cdot)\in\mathcal{U}_2[0,T]$, where
\begin{equation}\label{cost_L1'}
	\tilde{J}'_f(f_2(\cdot);\xi,u_2(\cdot)):=\mathbb{E}\bigg\{\int_0^T \Big[\langle Qx,x\rangle +\frac{\gamma}{2}\langle \hat{R}_0f_2,f_2\rangle \Big] \mathrm{d}t +\langle Gx(T),x(T)\rangle \bigg\}.
\end{equation}

For further proofs, we need to discuss the convexity of (\ref{cost_L1'}).
\begin{mypro}\label{prop-4.1}
Let (A1)-(A6) hold. For any $(u_2(\cdot),\xi)\in\mathcal{U}_2[0,T]\times\mathbb{R}^n$, $\tilde{J}_f'(f_2(\cdot);\xi,u_2(\cdot))$ is convex (resp., strictly convex) in $f_2(\cdot)\in L_{\mathbb{F}}^2(0,T;\mathbb{R}^{n})$ if and only if $J'_2(g(\cdot))$ is positive semidefinite (resp., positive definite), where
$$
\tilde{J}'_2(g(\cdot)):=\mathbb{E}\bigg\{\int_0^T \Big[\langle Qz'_2,z'_2\rangle +\frac{\gamma}{2}\langle \hat{R}_0g,g\rangle \Big] \mathrm{d}t +\langle Gz'_2(T),z'_2(T)\rangle \bigg\},
$$
and $z'_2(\cdot)$ satisfies
\begin{equation}
\left\{\begin{aligned}
	\mathrm{d}z'_2(t)&=[Az'_2+g]\mathrm{d}t+Cz'_2\mathrm{d}W, \\
	z'_2(0)&=0.
\end{aligned}\right.
\end{equation}
\end{mypro}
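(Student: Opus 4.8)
The plan is to mirror the argument used for Lemma \ref{lemma3.1}, exploiting the fact that in the system (\ref{state L1}) only the top component $x(\cdot)$ depends on the disturbance $f_2(\cdot)$. Indeed, for fixed $(\xi,u_2(\cdot))$ the tuple $(\bar{x}(\cdot),\bar{p}(\cdot),\bar{y}(\cdot),\bar{z}(\cdot),\bar{\beta}(\cdot),\bar{q}(\cdot))$ is the follower's optimal response and solves a closed FBSDE subsystem that involves neither $x$, $f_1$ nor $f_2$; hence these processes stay frozen as $f_2$ varies. Consequently the map $f_2(\cdot)\mapsto x(\cdot)$ is affine, and $\tilde{J}'_f(f_2;\xi,u_2)$ is a quadratic functional of $f_2(\cdot)$, so the standard second-difference characterization of convexity applies.

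First I would fix $\xi\in\mathbb{R}^n$ and $u_2(\cdot)\in\mathcal{U}_2[0,T]$, take arbitrary $g_1(\cdot),g_2(\cdot)\in L_{\mathbb{F}}^2(0,T;\mathbb{R}^n)$, and let $x_1(\cdot),x_2(\cdot)$ be the corresponding top-components of (\ref{state L1}). For $\lambda_1\in[0,1]$ with $\lambda_2:=1-\lambda_1$, I would compute the convexity gap
\[
\lambda_1\tilde{J}'_f(g_1)+\lambda_2\tilde{J}'_f(g_2)-\tilde{J}'_f(\lambda_1 g_1+\lambda_2 g_2).
\]
Since the frozen processes $\bar{x},\bar{p},\bar{y},\bar{z},\bar{q}$ enter all three states identically and the integrand $\langle Qx,x\rangle+\tfrac{\gamma}{2}\langle\hat{R}_0 f_2,f_2\rangle$ together with the terminal term $\langle Gx(T),x(T)\rangle$ is quadratic, all linear contributions—in particular every cross term involving the frozen components—cancel, leaving only the second-order part, which factors as $\lambda_1\lambda_2$ times a quadratic in the differences.

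Next I would set $g:=g_1-g_2$ and $z'_2:=x_1-x_2$. Subtracting the two copies of the $x$-equation in (\ref{state L1}), every term carrying $\bar{x},\bar{y},\bar{z},u_2,\sigma,f_1$ cancels (each being independent of $f_2$), and the initial data cancel since $x_1(0)=x_2(0)=\xi$, leaving exactly $\mathrm{d}z'_2=[Az'_2+g]\mathrm{d}t+Cz'_2\mathrm{d}W$ with $z'_2(0)=0$, which is the asserted auxiliary system. Substituting into the second-order part yields
\[
\lambda_1\tilde{J}'_f(g_1)+\lambda_2\tilde{J}'_f(g_2)-\tilde{J}'_f(\lambda_1 g_1+\lambda_2 g_2)=\lambda_1\lambda_2\,\tilde{J}'_2(g),
\]
from which the equivalence between convexity (resp.\ strict convexity) of $\tilde{J}'_f$ and positive semidefiniteness (resp.\ positive definiteness) of $\tilde{J}'_2$ is immediate, exactly as in Lemma \ref{lemma3.1}.

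The expansion itself is entirely routine; the only point requiring care—and the step I expect to be the crux—is verifying that the backward/auxiliary processes are genuinely independent of $f_2(\cdot)$, so that $x_1-x_2$ solves the clean homogeneous equation with no residual coupling to the barred components. Once this decoupling is recorded, the remainder is a direct quadratic computation identical in form to the follower's case, and the proof can legitimately be abbreviated.
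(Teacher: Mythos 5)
Your proof is correct and takes essentially the same route as the paper, whose entire proof is the remark ``Similar to the proof of Lemma \ref{lemma3.1}, we omit it'': you carry out exactly that second-difference computation, obtaining the gap identity $\lambda_1\tilde{J}'_f(g_1)+\lambda_2\tilde{J}'_f(g_2)-\tilde{J}'_f(\lambda_1 g_1+\lambda_2 g_2)=\lambda_1\lambda_2\,\tilde{J}'_2(g_1-g_2)$. Your explicit verification that the barred subsystem $(\bar{x},\bar{p},\bar{\beta},\bar{y},\bar{z},\bar{q})$ in (\ref{state L1}) is closed and independent of $f_2(\cdot)$ --- so that only the $x$-component varies and its difference solves the homogeneous auxiliary SDE --- is precisely the point that makes the paper's abbreviation legitimate.
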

		
\begin{proof}
Similar to the proof of Lemma \ref{lemma3.1}, we omit it.
\end{proof}
		
For notational simplicity, we set
\begin{equation*}
\left\{\begin{aligned}
	&\hat{X}:=(x^\top,\bar{x}^\top,\bar{q}^\top)^\top, \quad \hat{Y}:=(\bar{y}^\top,\bar{p}^\top)^\top, \quad \hat{Z}:=(\bar{z}^\top,\bar{\beta}^\top)^\top,\\
    &\check{I}:=(I_n^\top,0,0)^\top,\quad\check{\xi}:=(\xi^\top,\xi^\top,0)^\top, \\
	&\check{A}:=
	\begin{pmatrix}
		A & -B_1\widetilde{R}_1^{-1}\big(B_1^\top P+D_1^\top PC\big) & 0 \\
		0 & A-B_1\widetilde{R}_1^{-1}\big(B_1^\top P+D_1^\top PC\big) & 0 \\
		0 & -\frac{2}{\alpha}R_0^{-1}P & A
	\end{pmatrix}, \\
	&\check{C}:=
	\begin{pmatrix}
	 	C & -D_1\widetilde{R}_1^{-1}\big(B_1^\top P+D_1^\top PC\big) & 0 \\
	 	0 & C-D_1\widetilde{R}_1^{-1}\big(B_1^\top P+D_1^\top PC\big) & 0 \\
	 	0 & 0 & C
	\end{pmatrix}, \\
	&\check{B}_1:=
	\begin{pmatrix}
		B_1\widetilde{R}_1^{-1}B_1^\top& 0\\
		B_1\widetilde{R}_1^{-1}B_1^\top& -\frac{2}{\alpha}R_0^{-1}\\
		\frac{2}{\alpha}R_0^{-1}&-\frac{2}{\alpha}R_0^{-1}
	\end{pmatrix}, \quad
	 \check{B}_3:=
	 \begin{pmatrix}
		B_1\widetilde{R}_1^{-1}D_1^\top& 0\\
		B_1\widetilde{R}_1^{-1}D_1^\top& 0\\
		0&0
	 \end{pmatrix}, \\
	 &\check{D}_1:=
	 \begin{pmatrix}
		D_1\widetilde{R}_1^{-1}B_1^\top& 0\\
		D_1\widetilde{R}_1^{-1}B_1^\top& 0\\
		0&0
	 \end{pmatrix}, \quad
	 \check{D}_3:=
	 \begin{pmatrix}
		D_1\widetilde{R}_1^{-1}D_1^\top& 0\\
		D_1\widetilde{R}_1^{-1}D_1^\top& 0\\
		0&0
	\end{pmatrix}, \\
	&\check{B}_2:=
	\begin{pmatrix}
		B_2-B_1\widetilde{R}_1^{-1}D_1^\top PD_2\\
		B_2-B_1\widetilde{R}_1^{-1}D_1^\top PD_2\\0
	\end{pmatrix}, \quad
	\check{D}_2:=
	\begin{pmatrix}
		D_2-D_1\widetilde{R}_1^{-1}D_1^\top PD_2\\
		D_2-D_1\widetilde{R}_1^{-1}D_1^\top PD_2\\0
	\end{pmatrix}, \\
	&\check{F_1}:=
	\begin{pmatrix}
		f_1-B_1\widetilde{R}_1^{-1}D_1^\top P\sigma\\
		-B_1\widetilde{R}_1^{-1}D_1^\top P\sigma\\0
	\end{pmatrix}, \quad
	\check{\sigma}:=
	\begin{pmatrix}
		\big(I-D_1\widetilde{R}_1^{-1}D_1^\top P\big)\sigma\\
		\big(I-D_1\widetilde{R}_1^{-1}D_1^\top P\big)\sigma\\0
	\end{pmatrix}, \\
	&\check{Q}:=
	\begin{pmatrix}
		0 & 0 & -Q \\
		0 & Q & 0
	\end{pmatrix}, \quad
	\check{G}:=
	\begin{pmatrix}
		0 & -G & G \\
		0 & -G & 0
	\end{pmatrix}, \\
	&\bar{Q}:=
	\begin{pmatrix}
		Q & 0 & 0 \\
		0 & 0 & 0 \\
		0 & 0 & 0
	\end{pmatrix}, \quad
	\bar{G}:=
	\begin{pmatrix}
		G & 0 & 0 \\
		0 & 0 & 0 \\
		0 & 0 & 0
	\end{pmatrix}.
\end{aligned}\right.
\end{equation*}
Then (\ref{state L1}), together with (\ref{cost_L1'}), is equivalent to the following FBSDE:
\begin{equation}\label{state L1'e}
\left\{\begin{aligned}					
    \mathrm{d}\hat{X}(t)&=\big[\check{A}\hat{X}+\check{B}_1\hat{Y}+\check{B}_3\hat{Z}+\check{B}_2u_2+\check{F}_1+\check{I}f_2\big]\mathrm{d}t\\
    &\quad +\big[\check{C}\hat{X}+\check{D}_1\hat{Y}+\check{D}_3\hat{Z}+\check{D}_2u_2+\check{\sigma}\big]\mathrm{d}W, \\
	\mathrm{d}\hat{Y}(t)&=\big[-\hat{A}^\top\hat{Y}-\hat{C}^\top\hat{Z}+\check{Q}\hat{X}+\hat{F}u_2+\hat{v}\big]\mathrm{d}t+\hat{Z}(t)\mathrm{d}W, \\
	\hat{X}(0)&=\check{\xi}, \quad \hat{Y}(T)=\check{G}\hat{X}(T),
\end{aligned}\right.
\end{equation}
and	
\begin{equation}\label{cost_L1'e}
\begin{aligned}
	\tilde{J}'_f(f_2(\cdot);\xi,u_2(\cdot))=\mathbb{E}\bigg\{\int_0^T \Big[\langle \bar{Q}\hat{X},\hat{X}\rangle +\frac{\gamma}{2}\langle \hat{R}_0f_2,f_2\rangle \Big] \mathrm{d}t
    +\langle \bar{G}\hat{X}(T),\hat{X}(T)\rangle \bigg\}.
\end{aligned}
\end{equation}
		
\begin{mythm}\label{thm-4.1}
Suppose that (A1)-(A7) hold. For any $(u_2(\cdot),\xi)\in\mathcal{U}_2[0,T]\times\mathbb{R}^n$, then Problem \textbf{(LQ-L1)} has a (unique) minimizer $\bar{f}_2(\cdot)$ if and only if (\ref{state L1'e}) admits a solution corresponding to $\bar{f}_2(\cdot)$ and the following adjoint equation
\begin{equation}\label{adjoint equation-2}
\left\{\begin{aligned}
	\mathrm{d}\hat{q}(t)&=-\big[\check{A}^\top\hat{q}+\check{C}^\top\hat{k}+\check{Q}^\top\hat{p}-\bar{Q}\hat{X}\big]\mathrm{d}t+\hat{k}(t)\mathrm{d}W, \\
	\mathrm{d}\hat{p}(t)&=\big[\hat{A}\hat{p}-\check{B}_1^\top\hat{q}-\check{D}_1^\top\hat{k}\big]\mathrm{d}t +\big[\hat{C}\hat{p}-\check{B}_3^\top\hat{q}-\check{D}_3^\top\hat{k}\big]\mathrm{d}W, \\
	\hat{q}(T)&=-\check{G}^\top\hat{p}(T)-\bar{G}\hat{X}(T), \quad \hat{p}(0)=0,
\end{aligned}\right.
\end{equation}
admits a (unique) adapted solution $(\hat{p}(\cdot),\hat{q}(\cdot),\hat{k}(\cdot))$, where
\begin{equation*}
	\hat{p}:=\begin{pmatrix}
		\tilde{\bar{y}}\\
		\tilde{\bar{p}}
	\end{pmatrix}, \quad
	\hat{q}:=\begin{pmatrix}
		\tilde{x}\\
		\tilde{\bar{x}}\\
		\tilde{\bar{q}}
	\end{pmatrix}, \quad
	\hat{k}:=\begin{pmatrix}
		\zeta\\
		\upsilon\\
		\varsigma
	\end{pmatrix}.
\end{equation*}	
Here, the superscript ``$\,\,\tilde{\cdot}\,$" represents the adjoint part of the corresponding processes. Moreover, the optimal minimizer is given by
\begin{equation}\label{wo-2}
	\bar{f}_2(t)=\frac{2}{\gamma}\hat{R}_0^{-1}(t)\tilde{x}(t), \quad t\in[0,T].
\end{equation}
\end{mythm}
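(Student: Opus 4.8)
The plan is to regard Problem \textbf{(LQ-L1a)} as a convex LQ optimal control problem whose ``state'' is the fully coupled FBSDE (\ref{state L1'e}) and whose ``control'' is the disturbance $f_2(\cdot)$, and to characterize its minimizer by a stochastic maximum principle (variational) argument. First I would record that, under (A7)(iii) and Proposition \ref{prop-4.1}, the reduced cost (\ref{cost_L1'e}) is strictly convex in $f_2(\cdot)\in L_{\mathbb F}^2(0,T;\mathbb R^n)$; consequently a minimizer, if it exists, is unique and is characterized precisely by the vanishing of the G\^ateaux derivative of $\tilde J_f'$. The two structural features I would exploit are that $f_2$ enters (\ref{state L1'e}) only through the injection $\check I f_2$ in the forward drift, and that only $\bar Q,\bar G$ weight the state $\hat X$ in (\ref{cost_L1'e}).

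Second, fixing a candidate $\bar f_2(\cdot)$ with state $(\hat X,\hat Y,\hat Z)$, I take an arbitrary direction $\delta f_2(\cdot)$ and $\lambda\in\mathbb R$, and let $(\delta\hat X,\delta\hat Y,\delta\hat Z)$ solve the source-free linearization of (\ref{state L1'e}) driven by $\check I\,\delta f_2$, with $\delta\hat X(0)=0$ and $\delta\hat Y(T)=\check G\,\delta\hat X(T)$. Since the $(\bar x,\bar q,\bar y,\bar p)$-block of (\ref{state L1'e}) is autonomous and $f_2$-independent, the perturbation decouples so that $\delta\hat X=(\delta x,0,0)^\top$ with $\delta x$ solving the auxiliary SDE for $z_2'$ of Proposition \ref{prop-4.1}; this identifies the quadratic part of the cost increment with $\tilde{J}'_2(\delta f_2)\ge0$. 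To eliminate the state variation from the linear part I introduce the adjoint triple $(\hat p,\hat q,\hat k)$ of (\ref{adjoint equation-2}); because the state is a coupled FBSDE, its adjoint is again a coupled FBSDE in which forward and backward roles are exchanged, so that $\hat q$ runs backward with $\hat q(T)=-\check G^\top\hat p(T)-\bar G\hat X(T)$ while $\hat p$ runs forward with $\hat p(0)=0$. Applying It\^o's formula to $\langle\hat q,\delta\hat X\rangle+\langle\hat p,\delta\hat Y\rangle$, all terms carrying $\delta\hat X,\delta\hat Y,\delta\hat Z$ cancel by the structure of the two systems (the coefficients $\check A,\check C,\check B_i,\check D_i,\check Q,\hat A,\hat C$ appearing pairwise through their transposes), the boundary data collapse to $-\mathbb E\langle\bar G\hat X(T),\delta\hat X(T)\rangle$, and only the control term survives, giving
\[
\mathbb{E}\!\int_0^T\!\langle\bar Q\hat X,\delta\hat X\rangle\,\mathrm dt+\mathbb{E}\langle\bar G\hat X(T),\delta\hat X(T)\rangle=-\,\mathbb{E}\!\int_0^T\!\langle\tilde x,\delta f_2\rangle\,\mathrm dt ,
\]
where $\tilde x=\check I^\top\hat q$ is the first block of $\hat q$.

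Third, substituting this duality identity into the expansion of the cost yields
\[
\tilde J_f'(\bar f_2+\lambda\delta f_2)-\tilde J_f'(\bar f_2)=\lambda^2\tilde{J}'_2(\delta f_2)+2\lambda\,\mathbb{E}\!\int_0^T\!\Big\langle\tfrac{\gamma}{2}\hat R_0\bar f_2-\tilde x,\delta f_2\Big\rangle\,\mathrm dt .
\]
By strict convexity, $\bar f_2$ is the unique minimizer if and only if the linear term vanishes for every $\delta f_2$, i.e. $\tfrac{\gamma}{2}\hat R_0\bar f_2=\tilde x$, which by (A7)(ii) is equivalent to the feedback (\ref{wo-2}). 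This gives both implications: for necessity I invoke the maximum principle to produce an adjoint solution of (\ref{adjoint equation-2}) with $\bar f_2$ satisfying (\ref{wo-2}); for sufficiency, given a solution of the coupled system I define $\bar f_2$ by (\ref{wo-2}), so that the linear term vanishes and $\tilde J_f'(\bar f_2+\lambda\delta f_2)-\tilde J_f'(\bar f_2)=\lambda^2\tilde{J}'_2(\delta f_2)\ge0$, proving optimality; uniqueness is immediate from strict convexity.

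The step I expect to be the main obstacle is the It\^o/duality bookkeeping: one must verify that, with the explicit block structures of $\check A,\check B_1,\check B_3,\check C,\check D_1,\check D_3,\check Q,\check G$ and of $\hat A,\hat C,\hat F$, the transposes and signs appearing in (\ref{adjoint equation-2}) are exactly those forced by the pairwise cancellation, so that only $\langle\tilde x,\delta f_2\rangle$ survives with the sign that produces the ``$+$'' in (\ref{wo-2}). A second delicate point, needed for the ``if'' direction and for the existence of a unique adapted solution of (\ref{adjoint equation-2}), is the well-posedness of the high-dimensional FBSDE obtained by coupling (\ref{state L1'e}) with (\ref{adjoint equation-2}) after inserting (\ref{wo-2}); here the positive definiteness in (A7)(iii), together with (A5) and the solvability of the Riccati equation (\ref{R-2}) granted by (A7)(i), must be marshalled to guarantee existence and uniqueness.
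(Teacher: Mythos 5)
Your proposal is correct and follows essentially the same route as the paper: the paper omits the proof of Theorem \ref{thm-4.1}, stating it is ``similar to the proof of Proposition \ref{prop-3.1},'' and that proof is exactly your argument — perturb the disturbance, apply It\^o's formula to the duality pairing between the perturbed state and the adjoint FBSDE so that the linear term in the cost expansion reduces to $\mathbb{E}\int_0^T\langle\frac{\gamma}{2}\hat R_0\bar f_2-\tilde x,\delta f_2\rangle\,\mathrm dt$, and then use the positive definiteness in (A7)(iii) for optimality and uniqueness, with the stochastic maximum principle supplying the necessity direction. Your additional observation that the $(\bar x,\bar q,\bar y,\bar p)$-block is autonomous, so the perturbation collapses to the scalar-block SDE of Proposition \ref{prop-4.1} and the quadratic part equals $\tilde J_2'(\delta f_2)$, is a correct filling-in of a detail the paper leaves implicit.
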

		
\begin{proof}
Similar to the proof of Proposition \ref{prop-3.1}, we omit it.
\end{proof}
		
Let us set
\begin{equation*}
\left\{\begin{aligned}
	&\mathbb{X}:=(\hat{X}^\top,\hat{p}^\top)^\top, \quad \mathbb{Y}:=(\hat{q}^\top,\hat{Y}^\top)^\top, \quad \mathbb{Z}:=(\hat{k}^\top,\hat{Z}^\top)^\top, \\
	&\mathbb{A}:=
	\begin{pmatrix}
		\check{A}& 0\\
		0& \hat{A}
	\end{pmatrix}, \quad \mathbb{C}:=\begin{pmatrix}
		\check{C}& 0\\
		0& \hat{C}
	\end{pmatrix}, \quad
    \mathbb{B}_2:=(\check{B}_2^\top,0)^\top, \\
    &\mathbb{F}_1:=(\check{F}_1^\top,0)^\top, \quad
	\mathbb{F}_2:=(0,\hat{F}^\top)^\top,\quad
    \mathbb{D}_2:=(\check{D}_2^\top,0)^\top, \\
    &\Sigma:=(\check{\sigma}^\top,0)^\top, \quad
    \Xi:=(\check{\xi}^\top,0)^\top,\quad
    \Upsilon:=(0,\hat{v}^\top)^\top,\\
	&\mathbb{B}_1:=
	\begin{pmatrix}
		\frac{2}{\gamma}\check{I}\hat{R}_0^{-1}\check{I}^\top& \check{B}_1\\
		-\check{B}_1^\top&0
	\end{pmatrix}, \quad
	\mathbb{B}_3:=
	\begin{pmatrix}
	0 & \check{B}_3\\
	-\check{D}_1^\top & 0
	\end{pmatrix}, \quad
	\mathbb{D}_1:=
	\begin{pmatrix}
		0&\check{D}_1 \\
		-\check{B}_3^\top & 0
	\end{pmatrix}, \\
	&\mathbb{D}_3:=
	\begin{pmatrix}
		0&\check{D}_3 \\
		-\check{D}_3^\top& 0
	\end{pmatrix}, \quad
	\mathbb{Q}:=
	\begin{pmatrix}
		\bar{Q} &-\check{Q}^\top \\
		 \check{Q}& 0
	\end{pmatrix}, \quad
	\mathbb{G}:=
	\begin{pmatrix}
		-\bar{G}& -\check{G}^\top \\
		\check{G}  & 0
	\end{pmatrix}.
\end{aligned}\right.
\end{equation*}
Then, (\ref{state L1'e}), (\ref{adjoint equation-2}) and (\ref{wo-2}) are equivalent to the FBSDE:
\begin{equation}\label{state L2}
\left\{\begin{aligned}
	\mathrm{d}\mathbb{X}(t)&=\big[\mathbb{A}\mathbb{X}+\mathbb{B}_1\mathbb{Y}+\mathbb{B}_3\mathbb{Z}+\mathbb{B}_2u_2+\mathbb{F}_1\big]\mathrm{d}t
    +\big[\mathbb{C}\mathbb{X}+\mathbb{D}_1\mathbb{Y}+\mathbb{D}_3\mathbb{Z}+\mathbb{D}_2u_2+\Sigma\big]\mathrm{d}W, \\
	\mathrm{d}\mathbb{Y}(t)&=\big[-\mathbb{A}^\top\mathbb{Y}-\mathbb{C}^\top\mathbb{Z}+\mathbb{Q}\mathbb{X}+\mathbb{F}_2u_2+\Upsilon\big]\mathrm{d}t+\mathbb{Z}(t)\mathrm{d}W, \\
	\mathbb{X}(0)&=\Xi, \quad \mathbb{Y}(T)=\mathbb{G}\mathbb{X}(T).
\end{aligned}\right.
\end{equation}
		
We now use the idea of the four-step scheme (see, for example, \cite{Yong02}) to study the solvability of the above Hamiltonian system.
\begin{mypro}\label{prop-4.2}
Let (A1)-(A7) hold, if Riccati equation
\begin{equation}
\left\{\begin{aligned}\label{R-3}
	&\dot{P}_3+P_3\mathbb{A}+\mathbb{A}^\top P_3+P_3\mathbb{B}_1P_3-\mathbb{Q}+(\mathbb{C}^\top+P_3\mathbb{B}_3)(I-P_3\mathbb{D}_3)^{-1}(P_3\mathbb{C}+P_3\mathbb{D}_1P_3)=0, \\
    & P_3(T)=\mathbb{G},
\end{aligned}\right.
\end{equation}
admits a unique solution $P_3(\cdot)\in C([0,T];\mathbb{R}^{5n\times5n})$ over $[0,T]$, then the following BSDE admits a unique solution $(\varphi(\cdot),\Lambda(\cdot))\in L_{\mathbb{F}}^2(\Omega;C([0,T];\mathbb{R}^{5n}))\times L_{\mathbb{F}}^2(0,T;\mathbb{R}^{5n})$:
\begin{equation}
\left\{\begin{aligned}\label{B-3}
	\mathrm{d}\varphi(t)&=-\Big\{\big[\mathbb{A}^\top+P_3\mathbb{B}_1+(\mathbb{C}^\top+P_3\mathbb{B}_3)(I-P_3\mathbb{D}_3)^{-1}P_3\mathbb{D}_1\big]\varphi\\
    &\qquad +(\mathbb{C}^\top+P_3\mathbb{B}_3)(I-P_3\mathbb{D}_3)^{-1}\big[P_3(\mathbb{D}_2u_2+\Sigma)+\Lambda\big]\\
    &\qquad +(P_3\mathbb{B}_2-\mathbb{F}_2)u_2+P\mathbb{F}_1-\Upsilon\Big\}\mathrm{d}t+\Lambda(t) \mathrm{d}W, \\
	\varphi(T)&=0,
\end{aligned}\right.
\end{equation}
and the wellposedness of system (\ref{state L2}) is obtained.
\end{mypro}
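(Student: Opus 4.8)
The plan is to follow the four-step scheme alluded to just before the statement, exactly mirroring the proof of Proposition \ref{prop-3.5}: decouple the fully coupled FBSDE (\ref{state L2}) through the ansatz $\mathbb{Y}(t)=P_3(t)\mathbb{X}(t)+\varphi(t)$, and thereby reduce the wellposedness question to a single closed forward SDE for $\mathbb{X}(\cdot)$.

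First I would settle the BSDE (\ref{B-3}). Since $P_3(\cdot)\in C([0,T];\mathbb{R}^{5n\times5n})$ is continuous on the compact interval it is bounded, and the hypothesized solvability of the Riccati equation (\ref{R-3}) guarantees that $I-P_3\mathbb{D}_3$ is invertible with $(I-P_3\mathbb{D}_3)^{-1}$ bounded along the solution. Consequently the generator of (\ref{B-3}) is affine in $(\varphi,\Lambda)$ with bounded coefficients, and the inhomogeneous terms---built from $u_2(\cdot)\in\mathcal{U}_2[0,T]$ together with $\Sigma$, $\mathbb{F}_1$, $\Upsilon$, whose data satisfy (A1)-(A2)---lie in $L_{\mathbb{F}}^2(0,T;\mathbb{R}^{5n})$. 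Standard linear BSDE theory then yields a unique adapted solution $(\varphi(\cdot),\Lambda(\cdot))\in L_{\mathbb{F}}^2(\Omega;C([0,T];\mathbb{R}^{5n}))\times L_{\mathbb{F}}^2(0,T;\mathbb{R}^{5n})$.

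Next I would verify the decoupling. Applying It\^o's formula to $P_3(\cdot)\mathbb{X}(\cdot)+\varphi(\cdot)$ and demanding agreement with the backward dynamics in (\ref{state L2}), the diffusion matching reads $\mathbb{Z}=P_3[\mathbb{C}\mathbb{X}+\mathbb{D}_1\mathbb{Y}+\mathbb{D}_3\mathbb{Z}+\mathbb{D}_2u_2+\Sigma]+\Lambda$, which after using the invertibility of $I-P_3\mathbb{D}_3$ becomes $\mathbb{Z}=(I-P_3\mathbb{D}_3)^{-1}\{P_3[\mathbb{C}\mathbb{X}+\mathbb{D}_1\mathbb{Y}+\mathbb{D}_2u_2+\Sigma]+\Lambda\}$. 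Substituting $\mathbb{Y}=P_3\mathbb{X}+\varphi$ and this expression for $\mathbb{Z}$ into the drift equality and collecting terms, the factor multiplying $\mathbb{X}$ vanishes precisely by the Riccati equation (\ref{R-3}), while the remaining $\mathbb{X}$-free terms cancel exactly by the BSDE (\ref{B-3}). This confirms that the ansatz is consistent with (\ref{state L2}).

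Finally, plugging $\mathbb{Y}=P_3\mathbb{X}+\varphi$ and the closed-form $\mathbb{Z}$ back into the forward equation turns it into a linear SDE for $\mathbb{X}(\cdot)$ alone, with $\mathbb{F}$-adapted bounded coefficients and an $L^2$ inhomogeneity; hence it admits a unique strong solution $\mathbb{X}(\cdot)\in L_{\mathbb{F}}^2(\Omega;C([0,T];\mathbb{R}^{5n}))$. Setting $\mathbb{Y}:=P_3\mathbb{X}+\varphi$ and defining $\mathbb{Z}$ by the above formula recovers a solution of (\ref{state L2}), and uniqueness follows since any solution must admit this representation. The main obstacle is ensuring the invertibility of $I-P_3\mathbb{D}_3$ together with the boundedness of $(I-P_3\mathbb{D}_3)^{-1}$ on $[0,T]$---which is exactly what the assumed solvability of (\ref{R-3}) encodes---and checking that the decoupled forward SDE retains bounded coefficients so that classical wellposedness applies; the remainder is routine coefficient comparison.
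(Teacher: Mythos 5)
Your proposal is correct and follows exactly the route the paper takes: the paper's (omitted) proof is precisely the decoupling argument of Proposition \ref{prop-3.5}, i.e., applying It\^o's formula to $P_3(\cdot)\mathbb{X}(\cdot)+\varphi(\cdot)$, matching diffusion and drift coefficients with (\ref{state L2}) so that the Riccati equation (\ref{R-3}) kills the $\mathbb{X}$-terms and the BSDE (\ref{B-3}) kills the remainder, and then solving the resulting closed linear forward SDE. Your write-up is in fact more complete than the paper's, since you also spell out the linear-BSDE solvability of (\ref{B-3}) and the role of the invertibility of $I-P_3\mathbb{D}_3$.
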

		
\begin{proof}
Similar to the proof of Proposition \ref{prop-3.5}.
\end{proof}
		
\subsection{Problem (\textbf{LQ-L2})}
		
By Theorem \ref{thm-4.1}, Proposition \ref{prop-4.2},  the worst disturbance $\bar{f}_2(\cdot)$ for given $(\xi,u_2(\cdot))$ can be determined. The leader now faces the outer maximizing Problem \textbf{(LQ-L2)}. Precisely, we rewrite
\begin{equation}\label{J_L1-wo}
\begin{aligned}
	&\tilde{J}^{wo}(u_2(\cdot);\xi)\equiv \tilde{J}_f(\bar{f}_2(\cdot);\xi,u_2(\cdot)) \\
	&=\mathbb{E}\bigg\{\int_0^T \Big[\langle Qx,x\rangle +\Big\langle R_1\widetilde{R}_1^{-1}\big[B_1^\top(t)\bar{y}(t)+D_1^\top(t)\bar{z}(t)-\big(B_1^\top P \\
	&\qquad\quad+D_1^\top PC\big)\bar{x}(t)-D_1^\top PD_2u_2(t)-D_1^\top P\sigma\big],\widetilde{R}_1^{-1}\big[B_1^\top(t)\bar{y}(t)\\
    &\qquad\quad+D_1^\top(t)\bar{z}(t)-\big(B_1^\top P+D_1^\top PC\big)\bar{x}(t)-D_1^\top PD_2u_2(t)-D_1^\top P\sigma\big]\Big\rangle \\
	&\qquad\quad +\langle R_2u_2,u_2\rangle+\frac{2}{\gamma}\langle \hat{R}_0^{-1}\tilde{x},\tilde{x}\rangle \Big] \mathrm{d}t+\langle Gx(T),x(T)\rangle \bigg\} .
\end{aligned}
\end{equation}
It is clear that Problem \textbf{(LQ-L2)} is equivalent to the following problem:
		
Problem \textbf{(LQ-L2a)}: To find a control $\bar{u}_2(\cdot)\in \mathcal{U}_2[0,T]$ such that
$$
\tilde{J}_a^{wo}(\bar{u}_2(\cdot);\xi)=\inf_{u_2(\cdot)\in\, \mathcal{U}_2[0,T]}\tilde{J}_a^{wo}(u_2(\cdot);\xi),
$$
subject to $\xi \in\mathbb{R}^n$ and state equation (\ref{state L2}), where
\begin{equation}\label{J_L1-wo-a}
\begin{aligned}
	\tilde{J}_a^{wo}(u_2(\cdot);\xi)&:=\mathbb{E}\bigg\{\int_0^T \Big[-\langle \bar{\mathbb{Q}}\mathbb{X},\mathbb{X}\rangle -\langle \bar{\mathbb{B}}\mathbb{Y},\mathbb{Y}\rangle
    -\langle \bar{\mathbb{D}}\mathbb{Z},\mathbb{Z}\rangle-2\langle \mathbb{S}_1\mathbb{X},\mathbb{Y}\rangle\\
    &\qquad\quad-2\langle \mathbb{L}_1\mathbb{X},\mathbb{Z}\rangle-2\langle \mathbb{M}_1\mathbb{Y},\mathbb{Z}\rangle-\langle \mathbb{R}u_2,u_2\rangle-2\langle \mathbb{S}_2\mathbb{X},u_2\rangle\\
    &\qquad\quad-2\langle \mathbb{M}_2\mathbb{Y},u_2\rangle-2\langle \mathbb{L}_2\mathbb{Z},u_2\rangle-2\langle \mathbb{S}_3\mathbb{X},\sigma\rangle-2\langle \mathbb{M}_3\mathbb{Y},\sigma\rangle\\
	&\qquad\quad-2\langle \mathbb{L}_3\mathbb{Z},\sigma\rangle-2\langle D_2^\top PD_1RD_1^\top P\sigma,u_2\rangle\Big] \mathrm{d}t
    -\langle \bar{\mathbb{G}}\mathbb{X}(T),\mathbb{X}(T)\rangle \bigg\} ,
\end{aligned}
\end{equation}
\begin{equation*}
\left\{\begin{aligned}
	&R:=\widetilde{R}_1^{-1}R_1\widetilde{R}_1^{-1} ,\quad\mathbb{R}:=R_2+D_2^\top PD_1RD_1^\top PD_2  ,\\
	&\bar{\mathbb{Q}}:=
	\begin{pmatrix}
		Q & 0 & 0 & 0 & 0\\
		0 & \big(PB_1+C^\top PD_1\big)R(B_1^\top P+D_1^\top PC) & 0 & 0 & 0\\
		0 & 0 & 0 & 0 & 0\\
		0 & 0 & 0 & 0 & 0\\
		0 & 0 & 0 & 0 &0
	\end{pmatrix}, \\
	&\bar{\mathbb{B}}:=
	\begin{pmatrix}
		\frac{2}{\gamma}\hat{R}_0^{-1} & 0 & 0 & 0 & 0\\
		0 & 0 & 0 & 0 & 0\\
		0 & 0 & 0 & 0 & 0\\
		0 & 0 & 0 & B_1RB_1^\top & 0\\
		0 & 0 & 0 & 0 & 0
	\end{pmatrix}, \quad
    \bar{\mathbb{D}}:=
	\begin{pmatrix}
		0 & 0 & 0 & 0 & 0\\
		0 & 0 & 0 & 0 & 0\\
		0 & 0 & 0 & 0 & 0\\
		0 & 0 & 0 & D_1RD_1^\top & 0\\
		0 & 0 & 0 & 0 &0
	\end{pmatrix},\\
	&\mathbb{S}_1:=
	\begin{pmatrix}
		0 & 0 & 0 & 0 & 0\\
		0 & 0 & 0 & 0 & 0\\
		0 & 0 & 0 & 0 & 0\\
		0 & -B_1R\big(B_1^\top P+D_1^\top PC\big) & 0 & 0 & 0\\
		0 & 0 & 0 & 0 & 0
	\end{pmatrix}, \\
	&\mathbb{M}_1:=
	\begin{pmatrix}
		0 & 0 & 0 & 0 & 0\\
		0 & 0 & 0 & 0 & 0\\
		0 & 0 & 0 & 0 & 0\\
		0 & 0 & 0 & D_1RB_1^\top & 0\\
		0 & 0 & 0 & 0 & 0
	\end{pmatrix},\quad
	\bar{\mathbb{G}}:=
	\begin{pmatrix}
		G & 0 & 0 & 0 & 0\\
		0 & 0 & 0 & 0 & 0\\
		0 & 0 & 0 & 0 & 0\\
		0 & 0 & 0 & 0 & 0\\
		0 & 0 & 0 & 0 & 0
	\end{pmatrix}, \\
&\mathbb{L}_1:=
	\begin{pmatrix}
		0 & 0 & 0 & 0 & 0\\
		0 & 0 & 0 & 0 & 0\\
		0 & 0 & 0 & 0 & 0\\
		0 & -D_1R\big(B_1^\top P+D_1^\top PC\big) & 0 & 0 & 0\\
		0 & 0 & 0 & 0 & 0
	\end{pmatrix},\\\end{aligned}\right.
\end{equation*}
and
\begin{equation*}
\left\{\begin{aligned}
	&\mathbb{S}_2:=\big(0,D_2^\top PD_1R\big(B_1^\top P+D_1^\top PC\big),0,0,0\big),\\
    &\mathbb{M}_2:=\big(0,0,0,-D_2^\top PD_1RB_1^\top,0\big),\\
	&\mathbb{L}_2:=\big(0,0,0,-D_2^\top PD_1RD_1^\top,0\big),\\
    &\mathbb{S}_3:=\big(0,PD_1R\big(B_1^\top P+D_1^\top PC\big),0,0,0\big),\\
	&\mathbb{M}_3:=\big(0,0,0,-PD_1RB_1^\top,0\big),\\
    &\mathbb{L}_2:=\big(0,0,0,-PD_1RD_1^\top,0\big).
\end{aligned}\right.
\end{equation*}

To tackle this, we firstly need to address necessary convexity criteria of Problem \textbf{(LQ-L2a)}. By analogous reasoning as in Proposition \ref{prop-4.1}, we have the following result.
		
\begin{mypro}
Let (A1)-(A7) hold. For any $\xi\in \mathbb{R}^n$, $\tilde{J}_a^{wo}(u_2(\cdot);\xi)$ is convex (resp., strictly convex) in $u_2(\cdot)\in \mathcal{U}_2[0,T]$ if and only if $\tilde{J}_2^{''}(v_2(\cdot))$ is positive semidefinite (resp., positive definite), where
\begin{equation}
\begin{aligned}
	\tilde{J}_2^{''}(v_2(\cdot))&:=\mathbb{E}\bigg\{\int_0^T \Big[-\langle \bar{\mathbb{Q}}z_2^{''},z_2^{''}\rangle -\langle \bar{\mathbb{B}}y_2,y_2\rangle -\langle \bar{\mathbb{D}}z_2,z_2\rangle
    -2\langle \mathbb{S}_1z_2^{''},y_2\rangle \\
	&\qquad\qquad\quad -2\langle \mathbb{L}_1z_2^{''},z_2\rangle-2\langle \mathbb{M}_1y_2,z_2\rangle-\langle \mathbb{R}v_2,v_2\rangle-2\langle \mathbb{S}_2z_2^{''},v_2\rangle\\
	&\qquad\qquad\quad -2\langle \mathbb{M}_2y_2,v_2\rangle-2\langle \mathbb{L}_2z_2,v_2\rangle\Big] dt
    -\langle Gz_2^{''}(T),z_2^{''}(T)\rangle \bigg\},
\end{aligned}
\end{equation}
subject to
\begin{equation}\label{auxiliary control P3}
	\left\{\begin{aligned}
		\mathrm{d}z_2^{''}(t)&=\big[\mathbb{A}z_2^{''}+\mathbb{B}_1y_2+\mathbb{B}_3z_2+\mathbb{B}_2v_2\big]\mathrm{d}t
		+\big[\mathbb{C}z_2^{''}+\mathbb{D}_1y_2+\mathbb{D}_3z_2+\mathbb{D}_2v_2\big]\mathrm{d}W, \\
		\mathrm{d}y_2(t)&=\big[-\mathbb{A}^\top y_2-\mathbb{C}^\top z_2+\mathbb{Q}z_2^{''}+\mathbb{F}_2v_2\big]\mathrm{d}t+z_2(t)\mathrm{d}W, \\
		z_2^{''}(0)&=0, \quad y_2(T)=\mathbb{G}z_2^{''}(T).
	\end{aligned}\right.
\end{equation}
\end{mypro}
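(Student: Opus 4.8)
The plan is to mimic the proofs of Lemma \ref{lemma3.1} and Proposition \ref{prop-4.1}, exploiting the fact that the map $u_2(\cdot)\mapsto(\mathbb{X}(\cdot),\mathbb{Y}(\cdot),\mathbb{Z}(\cdot))$ defined by the state FBSDE (\ref{state L2}) is affine, whereas the cost $\tilde{J}_a^{wo}(u_2(\cdot);\xi)$ in (\ref{J_L1-wo-a}) is quadratic plus affine in the tuple $(\mathbb{X},\mathbb{Y},\mathbb{Z},u_2)$. Consequently a standard second-difference computation isolates a purely quadratic functional of the increments, which I will identify with $\tilde{J}_2^{''}$.

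Concretely, fix $\xi\in\mathbb{R}^n$ and take any $u_2^1(\cdot),u_2^2(\cdot)\in\mathcal{U}_2[0,T]$, with associated solutions $(\mathbb{X}^i,\mathbb{Y}^i,\mathbb{Z}^i)$ of (\ref{state L2}), whose existence and uniqueness are guaranteed by Proposition \ref{prop-4.2}. For $\lambda\in[0,1]$ set $v_2:=u_2^1-u_2^2$. Since (\ref{state L2}) is linear in $(\mathbb{X},\mathbb{Y},\mathbb{Z},u_2)$ with inhomogeneous data $(\mathbb{F}_1,\Sigma,\Upsilon,\Xi)$ independent of $u_2$, uniqueness forces the solution for $\lambda u_2^1+(1-\lambda)u_2^2$ to equal $\lambda(\mathbb{X}^1,\mathbb{Y}^1,\mathbb{Z}^1)+(1-\lambda)(\mathbb{X}^2,\mathbb{Y}^2,\mathbb{Z}^2)$, and the increments $(z_2^{''},y_2,z_2):=(\mathbb{X}^1-\mathbb{X}^2,\mathbb{Y}^1-\mathbb{Y}^2,\mathbb{Z}^1-\mathbb{Z}^2)$ solve precisely the homogeneous system (\ref{auxiliary control P3}) driven by $v_2$, with $z_2^{''}(0)=0$ and $y_2(T)=\mathbb{G}z_2^{''}(T)$ obtained by subtracting the two sets of boundary data.

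I would then expand
\begin{equation*}
\lambda\tilde{J}_a^{wo}(u_2^1)+(1-\lambda)\tilde{J}_a^{wo}(u_2^2)-\tilde{J}_a^{wo}\big(\lambda u_2^1+(1-\lambda)u_2^2\big).
\end{equation*}
Every term in (\ref{J_L1-wo-a}) that is affine in $(\mathbb{X},\mathbb{Y},\mathbb{Z},u_2)$---in particular the inner products against the fixed datum $\sigma$ carrying $\mathbb{S}_3,\mathbb{M}_3,\mathbb{L}_3$ together with the $\langle D_2^\top PD_1RD_1^\top P\sigma,u_2\rangle$ term---cancels identically, because any affine $\ell$ satisfies $\lambda\ell(a)+(1-\lambda)\ell(b)=\ell(\lambda a+(1-\lambda)b)$. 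Each genuinely quadratic term (those carrying $\bar{\mathbb{Q}},\bar{\mathbb{B}},\bar{\mathbb{D}},\mathbb{S}_1,\mathbb{L}_1,\mathbb{M}_1,\mathbb{R},\mathbb{S}_2,\mathbb{M}_2,\mathbb{L}_2$ and the terminal $\bar{\mathbb{G}}$) contributes its symmetric bilinear form evaluated at the increments, producing the factor $\lambda(1-\lambda)$. Collecting these, the displayed quantity equals $\lambda(1-\lambda)\tilde{J}_2^{''}(v_2(\cdot))$, with $(z_2^{''},y_2,z_2)$ the solution of (\ref{auxiliary control P3}).

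Since $\lambda(1-\lambda)>0$ for $\lambda\in(0,1)$, convexity of $u_2\mapsto\tilde{J}_a^{wo}$ is then equivalent to $\tilde{J}_2^{''}(v_2)\geqslant0$ for all $v_2$, i.e. positive semidefiniteness, and strict convexity to $\tilde{J}_2^{''}(v_2)>0$ for $v_2\neq0$, i.e. positive definiteness, which is the assertion. The main obstacle is purely computational bookkeeping: one must match every cross term and every $\sigma$-dependent term in (\ref{J_L1-wo-a}) against its counterpart in $\tilde{J}_2^{''}$, and in particular confirm that the increment of the terminal $-\langle\bar{\mathbb{G}}\mathbb{X}(T),\mathbb{X}(T)\rangle$ reduces to $-\langle Gz_2^{''}(T),z_2^{''}(T)\rangle$, using that $\bar{\mathbb{G}}$ carries $G$ only in the block acting on the $x$-component of $\mathbb{X}$.
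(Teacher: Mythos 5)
Your proposal is correct and coincides with the paper's intended argument: the paper gives no separate proof here, stating only that the result follows ``by analogous reasoning as in Proposition \ref{prop-4.1},'' whose proof is in turn the second-difference computation of Lemma \ref{lemma3.1}. Your expansion---the affine ($\sigma$-dependent) terms cancelling, each quadratic and cross term contributing $\lambda(1-\lambda)$ times its bilinear form evaluated at the increments, and the increments solving the homogeneous system (\ref{auxiliary control P3})---is precisely that computation carried out for the leader's augmented FBSDE.
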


For further proof, we need the following assumption.

{\bf(A8)} (\romannumeral1) $\mathbb{R}(\cdot)\ll0$. (\romannumeral2) The map $v_2\mapsto \tilde{J}_2^{''}(v_2)$ is positive definite.
		
\begin{mythm}\label{thm-4.2}
Let (A1)-(A8) hold. Then Problem \textbf{(LQ-L2)} is solvable at $\xi\in\mathbb{R}^n$ if and only if the following Hamiltonian system:
\begin{equation}\label{Hamiltonian system}
\left\{\begin{aligned}					
    \mathrm{d}\hat{\mathbb{X}}(t)&=\big[\hat{\mathbb{A}}_1\hat{\mathbb{X}}+\hat{\mathbb{B}}_1\hat{\mathbb{Y}}+\hat{\mathbb{B}}_2\hat{\mathbb{Z}}+\hat{\mathbb{F}}\big]\mathrm{d}t
    +\big[\hat{\mathbb{C}}_1\hat{\mathbb{X}}+\hat{\mathbb{D}}_1\hat{\mathbb{Y}}+\hat{\mathbb{D}}_2\hat{\mathbb{Z}}+\hat{\Sigma}\big]\mathrm{d}W, \\
	\mathrm{d}\hat{\mathbb{Y}}(t)&=\big[-\hat{\mathbb{A}}_2^\top\hat{\mathbb{Y}}-\hat{\mathbb{C}}_2^\top\hat{\mathbb{Z}}+\hat{\mathbb{Q}}\hat{\mathbb{X}}+\hat{\Upsilon}\big]\mathrm{d}t+\hat{\mathbb{Z}}(t)\mathrm{d}W, \\
	\hat{\mathbb{X}}(0)&=\hat{\Xi}, \quad \hat{\mathbb{Y}}(T)=\hat{\mathbb{G}}\hat{\mathbb{X}}(T),
\end{aligned}\right.
\end{equation}
admits a unique solution $(\hat{\mathbb{X}}(\cdot),\hat{\mathbb{Y}}(\cdot),\hat{\mathbb{Z}}(\cdot))$, where
\begin{equation*}
\left\{\begin{aligned}
    &\hat{\mathbb{X}}:=(x^\top,\bar{x}^\top,\bar{q}^\top,\tilde{\bar{y}}^\top,\tilde{\bar{p}}^\top,\hat{\bar{y}}^\top,\hat{\bar{p}}^\top,\hat{\tilde{x}}^\top,\hat{\tilde{\bar{x}}}^\top,\hat{\tilde{\bar{q}}}^\top)^\top,\\ &\hat{\mathbb{Y}}:=(\hat{x}^\top,\hat{\bar{x}}^\top,\hat{\bar{q}}^\top,\hat{\tilde{\bar{y}}}^\top,\hat{\tilde{\bar{p}}}^\top,\bar{y}^\top,\bar{p}^\top,\tilde{x}^\top,\tilde{\bar{x}}^\top,\tilde{\bar{q}}^\top)^\top,\\
	&\hat{\mathbb{Z}}:=(\kappa^\top,\chi^\top,\mu^\top,\nu^\top,\eta^\top,\bar{z}^\top,\bar{\beta}^\top,\zeta^\top,\upsilon^\top,\varsigma^\top)^\top,\\
\end{aligned}\right.
\end{equation*}
\begin{equation*}
\left\{\begin{aligned}
	&\hat{\mathbb{A}}_1:=
	\begin{pmatrix}
		\mathbb{A}-\mathbb{B}_2\mathbb{R}^{-1}\mathbb{S}_2 & \mathbb{B}_2\mathbb{R}^{-1}\mathbb{F}_2^\top\\
		\mathbb{S}_1-\mathbb{M}_2^\top\mathbb{R}^{-1}\mathbb{S}_2 & \mathbb{A}+\mathbb{M}_2^\top\mathbb{R}^{-1}\mathbb{F}_2^\top
	\end{pmatrix}, \\
	&\hat{\mathbb{C}}_1:=
	\begin{pmatrix}
	 	\mathbb{C}-\mathbb{D}_2\mathbb{R}^{-1}\mathbb{S}_2 & \mathbb{D}_2\mathbb{R}^{-1}\mathbb{F}_2^\top\\
	 	\mathbb{L}_1-\mathbb{L}_2^\top\mathbb{R}^{-1}\mathbb{S}_2 & \mathbb{C}+\mathbb{L}_2^\top\mathbb{R}^{-1}\mathbb{F}_2^\top
	\end{pmatrix}, \\
	&\hat{\mathbb{A}}_2:=
	\begin{pmatrix}
	 	\mathbb{A}-\mathbb{B}_2\mathbb{R}^{-1}\mathbb{S}_2 & -\mathbb{B}_2\mathbb{R}^{-1}\mathbb{F}_2^\top\\
	 	-\mathbb{S}_1+\mathbb{M}_2^\top\mathbb{R}^{-1}\mathbb{S}_2 & \mathbb{A}+\mathbb{M}_2^\top\mathbb{R}^{-1}\mathbb{F}_2^\top
	\end{pmatrix}, \\
    &\hat{\mathbb{C}}_2:=
	\begin{pmatrix}
	 	\mathbb{C}-\mathbb{D}_2\mathbb{R}^{-1}\mathbb{S}_2 & -\mathbb{D}_2\mathbb{R}^{-1}\mathbb{F}_2^\top\\
	 	-\mathbb{L}_1+\mathbb{L}_2^\top\mathbb{R}^{-1}\mathbb{S}_2 & \mathbb{C}+\mathbb{L}_2^\top\mathbb{R}^{-1}\mathbb{F}_2^\top
	\end{pmatrix}, \\
    &\hat{\mathbb{B}}_1:=
	\begin{pmatrix}
			\mathbb{B}_2\mathbb{R}^{-1}\mathbb{B}_2^\top & \mathbb{B}_1-\mathbb{B}_2\mathbb{R}^{-1}\mathbb{M}_2\\
		-\mathbb{B}_1^\top+\mathbb{M}_2^\top\mathbb{R}^{-1}\mathbb{B}_2^\top & \bar{\mathbb{B}}-\mathbb{M}_2^\top\mathbb{R}^{-1}\mathbb{M}_2
	\end{pmatrix}, \\	
    &\hat{\mathbb{B}}_2:=
	\begin{pmatrix}
		\mathbb{B}_2\mathbb{R}^{-1}\mathbb{D}_2^\top & \mathbb{B}_3-\mathbb{B}_2\mathbb{R}^{-1}\mathbb{L}_2\\
		-\mathbb{D}_1^\top+\mathbb{M}_2^\top\mathbb{R}^{-1}\mathbb{D}_2^\top & \mathbb{M}_1^\top-\mathbb{M}_2^\top\mathbb{R}^{-1}\mathbb{L}_2
	\end{pmatrix}, \\
    &\hat{\mathbb{D}}_1:=
	\begin{pmatrix}
			\mathbb{D}_2\mathbb{R}^{-1}\mathbb{B}_2^\top & \mathbb{D}_1-\mathbb{D}_2\mathbb{R}^{-1}\mathbb{M}_2\\
			-\mathbb{B}_3^\top+\mathbb{L}_2^\top\mathbb{R}^{-1}\mathbb{B}_2^\top & \mathbb{M}_1-\mathbb{L}_2^\top\mathbb{R}^{-1}\mathbb{M}_2
	\end{pmatrix}, \\
	&\hat{\mathbb{D}}_2:=
	\begin{pmatrix}
		\mathbb{D}_2\mathbb{R}^{-1}\mathbb{D}_2^\top & \mathbb{D}_3-\mathbb{D}_2\mathbb{R}^{-1}\mathbb{L}_2\\
		-\mathbb{D}_3^\top+\mathbb{L}_2^\top\mathbb{R}^{-1}\mathbb{D}_2^\top & \bar{\mathbb{D}}^\top-\mathbb{L}_2^\top\mathbb{R}^{-1}\mathbb{L}_2
	\end{pmatrix}, \\
	&\hat{\mathbb{Q}}:=
	\begin{pmatrix}
		\bar{\mathbb{Q}}-\mathbb{S}_2^\top\mathbb{R}^{-1}\mathbb{S}_2 & -\mathbb{Q}^\top+\mathbb{S}_2^\top\mathbb{R}^{-1}\mathbb{F}_2^\top\\
		\mathbb{Q}-\mathbb{F}_2\mathbb{R}^{-1}\mathbb{S}_2 & \mathbb{F}\mathbb{R}^{-1}\mathbb{F}_2^\top
	\end{pmatrix}, \\
	&\hat{\mathbb{F}}:=
	\begin{pmatrix}
		\mathbb{F}_1-\mathbb{B}_2\mathbb{R}^{-1}D_2^\top PD_1RD_1^\top P\sigma \\
		(\mathbb{M}_3^\top-\mathbb{M}_2^\top \mathbb{R}^{-1}D_2^\top PD_1RD_1^\top P)\sigma
	\end{pmatrix},\\
	&\hat{\Sigma}:=
	\begin{pmatrix}
		\Sigma-\mathbb{D}_2\mathbb{R}^{-1}D_2^\top PD_1RD_1^\top P\sigma \\
		(\mathbb{L}_3^\top-\mathbb{L}_2^\top \mathbb{R}^{-1}D_2^\top PD_1RD_1^\top P)\sigma
	\end{pmatrix},\quad \hat{\Xi}:=(\Xi^\top,0)^\top,\\
	&\hat{\Upsilon}:=
	\begin{pmatrix}
		(\mathbb{S}_3^\top-\mathbb{S}_2^\top \mathbb{R}^{-1}D_2^\top PD_1RD_1^\top P)\sigma\\
		\Upsilon-\mathbb{F}_2\mathbb{R}^{-1}D_2^\top PD_1RD_1^\top P\sigma
	\end{pmatrix},\quad
    \hat{\mathbb{G}}:=
	\begin{pmatrix}
		-\bar{\mathbb{G}} & -\mathbb{G}^\top\\
		\mathbb{G} & 0
	\end{pmatrix},
\end{aligned}\right.
\end{equation*}
and the superscript of vector elements ``$\,\,\hat{\cdot}\,$" represents adjoint part of the corresponding processes. Moreover, the optimal control is given by
\begin{equation}\label{u_2*}
	\begin{aligned}
		\bar{u}_2(t)&=\mathbb{R}^{-1}\biggl\{\big(B_2^\top-D_2^\top PD_1\widetilde{R}_1^{-1} B_1^\top \big)\big(\hat{x}(t)+\hat{\bar{x}}(t)\big)\\
        &\qquad\quad +\big(D_2^\top-D_2^\top PD_1\widetilde{R}_1^{-1}D_1^\top \big)\big(\kappa(t)+\chi(t)\big) \\
		&\qquad\quad +\Big[B_2^\top P+D_2^\top PC-D_2^\top PD_1\widetilde{R}_1^{-1}\big(B_1^\top P+D_1^\top PC\big) \Big]\hat{\tilde{\bar{x}}}(t) \\
		&\qquad\quad -D_2^\top PD_1R\big(B_1^\top P+D_1^\top PC\big)\bar{x}(t)+D_2^\top PD_1RB_1^\top\tilde{\bar{x}}(t) \\
		&\qquad\quad +D_2^\top PD_1RD_1^\top\upsilon(t)-D_2^\top PD_1RD_1^\top P\sigma(t)\biggr\},\quad t\in[0,T],
	\end{aligned}
\end{equation}
where $\bar{x}(\cdot)$, $\hat{\tilde{\bar{x}}}(\cdot)$; $\hat{x}(\cdot)$,  $\hat{\bar{x}}(\cdot)$, $\tilde{\bar{x}}(\cdot)$; $\kappa(\cdot)$, $\chi(\cdot)$, $\upsilon(\cdot)$ are elements of $\hat{\mathbb{X}}(\cdot)$, $\hat{\mathbb{Y}}(\cdot)$, $\hat{\mathbb{Z}}(\cdot)$, respectively.
\end{mythm}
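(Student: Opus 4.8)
The plan is to treat Problem \textbf{(LQ-L2a)} as a standard LQ optimal control problem whose state is the fully coupled FBSDE (\ref{state L2}) and whose cost is the quadratic functional (\ref{J_L1-wo-a}), and to characterize its minimizer by the stochastic maximum principle for FBSDE-constrained control, following the same variational route used in Proposition \ref{prop-3.3}. First I would fix a candidate optimal control $\bar{u}_2(\cdot)$ with associated state triple $(\mathbb{X}(\cdot),\mathbb{Y}(\cdot),\mathbb{Z}(\cdot))$, and for arbitrary $v_2(\cdot)\in\mathcal{U}_2[0,T]$ and $\lambda\in\mathbb{R}$ perturb $u_2=\bar{u}_2+\lambda v_2$. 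By linearity of (\ref{state L2}) the state splits as $(\mathbb{X}+\lambda z_2^{''},\mathbb{Y}+\lambda y_2,\mathbb{Z}+\lambda z_2)$, where $(z_2^{''},y_2,z_2)$ solves precisely the linearized FBSDE (\ref{auxiliary control P3}). Expanding (\ref{J_L1-wo-a}) then gives
\[
\tilde{J}_a^{wo}(\bar{u}_2+\lambda v_2;\xi)-\tilde{J}_a^{wo}(\bar{u}_2;\xi)=\lambda^2\tilde{J}_2^{''}(v_2(\cdot))+2\lambda\,\ell(v_2),
\]
where $\ell(v_2)$ is linear in $v_2$. Under the positive definiteness of $v_2\mapsto\tilde{J}_2^{''}(v_2)$ granted by (A8)(\romannumeral2), $\bar{u}_2$ is optimal if and only if $\ell(v_2)=0$ for every $v_2$; this makes the first-order condition both necessary and sufficient and simultaneously yields uniqueness of the minimizer.

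Next I would convert the integral identity $\ell(v_2)=0$ into a pointwise stationarity relation by introducing adjoint processes. Since the state is itself an FBSDE, the adjoint system is also an FBSDE in which the forward and backward roles mix: to the forward part $\mathbb{X}$ I assign a backward adjoint (whose terminal condition is read off from the $\bar{\mathbb{G}}$ and $\mathbb{G}$ boundary terms), and to the backward part $\mathbb{Y}$ a forward adjoint starting from zero. Applying It\^o's formula to the pairings of these adjoints with $(z_2^{''},y_2,z_2)$ and substituting (\ref{auxiliary control P3}) eliminates every state-variation term in $\ell(v_2)$, leaving a single relation of the form $\mathbb{R}\bar{u}_2+(\text{linear combination of the state and adjoint processes})=0$. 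Because (A8)(\romannumeral1) gives $\mathbb{R}\ll0$, hence $\mathbb{R}$ invertible with $-\mathbb{R}$ uniformly positive, I can solve this relation explicitly for $\bar{u}_2$; written out in the original components it is precisely the feedback expression (\ref{u_2*}).

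Finally I would assemble the closed system. Stacking $\mathbb{X}$ together with the forward adjoint of $\mathbb{Y}$ into $\hat{\mathbb{X}}$, and the backward adjoint of $\mathbb{X}$ together with $\mathbb{Y}$ into $\hat{\mathbb{Y}}$, and substituting the expression (\ref{u_2*}) for $\bar{u}_2$ back into both the state FBSDE (\ref{state L2}) and the adjoint FBSDE, collapses everything into the $10n$-dimensional Hamiltonian system (\ref{Hamiltonian system}) with the displayed coefficient blocks $\hat{\mathbb{A}}_1,\hat{\mathbb{C}}_1,\dots,\hat{\mathbb{G}}$. Solvability of Problem \textbf{(LQ-L2)} is thereby equivalent to unique solvability of (\ref{Hamiltonian system}): the ``only if'' direction is the necessity of the maximum principle, while the ``if'' direction uses (A8)(\romannumeral2) to make the stationarity condition sufficient. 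The main obstacle is not conceptual but computational: one must correctly derive the mixed forward--backward adjoint system for an FBSDE-constrained problem, and then verify that after eliminating $\bar{u}_2$ through $\mathbb{R}^{-1}$ the numerous cross couplings (the $\mathbb{S}_i$, $\mathbb{M}_i$, $\mathbb{L}_i$ terms in (\ref{J_L1-wo-a})) reorganize exactly into the stated block matrices. The most delicate bookkeeping is matching the boundary data of the augmented system so that $\hat{\mathbb{X}}(0)=\hat{\Xi}$ and $\hat{\mathbb{Y}}(T)=\hat{\mathbb{G}}\hat{\mathbb{X}}(T)$ hold consistently; the remaining coefficient identifications are long but routine matrix manipulations that I would suppress.
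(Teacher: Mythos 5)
Your proposal is correct and follows essentially the same route the paper intends: the paper in fact states Theorem \ref{thm-4.2} without an explicit proof, relying on the variational scheme it carries out for Proposition \ref{prop-3.1} (and invokes again for Proposition \ref{prop-3.3} and Theorem \ref{thm-4.1}) --- perturb the control, use the quadratic expansion and the convexity assumptions (A8) to make the first-order condition necessary and sufficient, introduce the mixed forward--backward adjoints, solve the stationarity relation via $\mathbb{R}^{-1}$, and stack state and adjoints into the augmented Hamiltonian system. Your attempt reconstructs exactly this argument, including the correct roles of (A8)(\romannumeral1) and (A8)(\romannumeral2), so it matches the paper's (implicit) proof.
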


\begin{Remark}
Problem \textbf{(LQ-L2)} is a stochastic LQ optimal control problem of FBSDEs in an augmented space. Because the cost functional contains inter-state and cross-term between state and control, and the backward equation contains the control $u_2(\cdot)$, so that the forward equation and backward equation are not dual formally after dimension extension. We find that the corresponding elements on both sides of the main diagonal of the matrix $\hat{\mathbb{A}}_1$ and $\hat{\mathbb{A}}_2$ have opposite signs, and so do the matrix $\hat{\mathbb{C}}_1$ and $\hat{\mathbb{C}}_2$.
\end{Remark}
	
\begin{mypro}
If Riccati equation
\begin{equation}\label{R-4}
\left\{\begin{aligned}
	&\dot{\hat{P}}+\hat{P}\hat{\mathbb{A}}_1+\hat{\mathbb{A}}_2^\top\hat{P}+\hat{P}\hat{\mathbb{B}}_1\hat{P}-\hat{\mathbb{Q}}
    +(\hat{\mathbb{C}}_2^\top+\hat{P}\hat{\mathbb{B}}_2)(I-\hat{P}\hat{\mathbb{D}}_2)^{-1}(\hat{P}\hat{\mathbb{C}}_1+\hat{P}\hat{\mathbb{D}}_1\hat{P})=0, \\
	&\hat{P}(T)=\hat{\mathbb{G}},
\end{aligned}\right.
\end{equation}
admits a unique solution $\hat{P}(\cdot)\in C([0,T];\mathbb{R}^{10n\times10n})$ over $[0,T]$, then BSDE
\begin{equation}
\left\{\begin{aligned}\label{B-4}			
    \mathrm{d}\hat{\varphi}(t)&=-\Big\{\big[\hat{\mathbb{A}}_2^\top+\hat{P}\hat{\mathbb{B}}_1+(\hat{\mathbb{C}}_2^\top
    +\hat{P}\hat{\mathbb{B}}_2)(I-\hat{P}\hat{\mathbb{D}}_2)^{-1}\hat{P}\hat{\mathbb{D}}_1\big]\hat{\varphi}\\
    &\qquad+(\hat{\mathbb{C}}_2^\top+\hat{P}\hat{\mathbb{B}}_2)(I-\hat{P}\hat{\mathbb{D}}_2)^{-1}(\hat{P}\hat{\Sigma}+\hat{\Lambda})+\hat{P}\hat{\mathbb{F}}-\hat{\Upsilon}\Big\} \mathrm{d}t+\hat{\Lambda}(t)\mathrm{d}W, \\
	\hat{\varphi}(T)&=0,
\end{aligned}\right.
\end{equation}
admits a unique solution $(\hat{\varphi}(\cdot),\hat{\Lambda}(\cdot))\in L_{\mathbb{F}}^2(\Omega;C([0,T];\mathbb{R}^{10n}))\times L_{\mathbb{F}}^2(0,T;\mathbb{R}^{10n})$, then the wellposedness of system (\ref{Hamiltonian system}) is obtained.
\end{mypro}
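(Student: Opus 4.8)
The plan is to decouple the fully-coupled, $10n$-dimensional FBSDE (\ref{Hamiltonian system}) by the four-step scheme, exactly as in the proofs of Propositions \ref{prop-3.5} and \ref{prop-4.2}. The key device is the linear decoupling ansatz
\[
\hat{\mathbb{Y}}(t)=\hat{P}(t)\hat{\mathbb{X}}(t)+\hat{\varphi}(t),\quad t\in[0,T],
\]
with $\hat{P}(\cdot)$ the solution of the Riccati equation (\ref{R-4}) and $(\hat{\varphi}(\cdot),\hat{\Lambda}(\cdot))$ the solution of the BSDE (\ref{B-4}). Since $\hat{P}(T)=\hat{\mathbb{G}}$ and $\hat{\varphi}(T)=0$, this ansatz is automatically compatible with the terminal condition $\hat{\mathbb{Y}}(T)=\hat{\mathbb{G}}\hat{\mathbb{X}}(T)$.

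First I would apply It\^o's formula to $\hat{P}(t)\hat{\mathbb{X}}(t)+\hat{\varphi}(t)$ and substitute $\hat{\mathbb{Y}}=\hat{P}\hat{\mathbb{X}}+\hat{\varphi}$ into the forward dynamics of (\ref{Hamiltonian system}), so that the diffusion term of $\hat{\mathbb{X}}$ is written purely in terms of $\hat{\mathbb{X}}$, $\hat{\varphi}$ and $\hat{\mathbb{Z}}$. Equating the resulting diffusion of $\hat{P}\hat{\mathbb{X}}+\hat{\varphi}$ with the prescribed martingale integrand $\hat{\mathbb{Z}}$ of the backward equation produces an implicit identity for $\hat{\mathbb{Z}}$; solving it requires inverting $I-\hat{P}\hat{\mathbb{D}}_2$ and gives
\[
\hat{\mathbb{Z}}=(I-\hat{P}\hat{\mathbb{D}}_2)^{-1}\big[(\hat{P}\hat{\mathbb{C}}_1+\hat{P}\hat{\mathbb{D}}_1\hat{P})\hat{\mathbb{X}}+\hat{P}\hat{\mathbb{D}}_1\hat{\varphi}+\hat{P}\hat{\Sigma}+\hat{\Lambda}\big],
\]
which is precisely the source of the factor $(I-\hat{P}\hat{\mathbb{D}}_2)^{-1}$ appearing in (\ref{R-4}) and (\ref{B-4}).

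Next I would match the drift of $\hat{P}\hat{\mathbb{X}}+\hat{\varphi}$ against the prescribed drift $-\hat{\mathbb{A}}_2^\top\hat{\mathbb{Y}}-\hat{\mathbb{C}}_2^\top\hat{\mathbb{Z}}+\hat{\mathbb{Q}}\hat{\mathbb{X}}+\hat{\Upsilon}$ of the backward equation, inserting the expression just obtained for $\hat{\mathbb{Z}}$. Collecting all terms proportional to $\hat{\mathbb{X}}$ forces $\hat{P}(\cdot)$ to satisfy exactly the Riccati equation (\ref{R-4}), while the residual inhomogeneous terms in $\hat{\varphi},\hat{\Lambda},\hat{\mathbb{F}},\hat{\Sigma},\hat{\Upsilon}$ reproduce precisely the BSDE (\ref{B-4}). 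Under the standing hypothesis that (\ref{R-4}) admits a unique solution $\hat{P}(\cdot)\in C([0,T];\mathbb{R}^{10n\times10n})$---so that (\ref{B-4}) is in turn uniquely solvable---substituting the decoupling relation back into the forward equation yields a closed linear SDE for $\hat{\mathbb{X}}$ with bounded, measurable coefficients, which is uniquely solvable in $L_{\mathbb{F}}^2(\Omega;C([0,T];\mathbb{R}^{10n}))$. Recovering $\hat{\mathbb{Y}}$ and $\hat{\mathbb{Z}}$ from $\hat{\mathbb{X}}$, $\hat{\varphi}$, $\hat{\Lambda}$ then delivers the unique adapted solution of (\ref{Hamiltonian system}), so its wellposedness follows.

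The step I expect to be the main obstacle is securing the pointwise invertibility of $I-\hat{P}(t)\hat{\mathbb{D}}_2(t)$ on the whole interval with a uniformly bounded inverse, since this is what legitimizes solving for $\hat{\mathbb{Z}}$ and guarantees that the decoupled forward SDE has admissible coefficients. This invertibility is implicit in the solvability hypothesis for the Riccati equation (\ref{R-4}); granting it, the remaining work is the block-matrix coefficient bookkeeping needed to verify that the matching indeed collapses to (\ref{R-4}) and (\ref{B-4}), which is lengthy but routine and of the same nature as the calculations behind Propositions \ref{prop-3.5} and \ref{prop-4.2}.
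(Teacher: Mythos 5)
Your proposal is correct and follows essentially the same route as the paper: the paper's (omitted) proof is precisely the decoupling ansatz $\hat{\mathbb{Y}}=\hat{P}\hat{\mathbb{X}}+\hat{\varphi}$, It\^o's formula, and coefficient comparison with the Hamiltonian system, as indicated by its reference back to the proof of Proposition \ref{prop-3.5}. Your additional remarks on the invertibility of $I-\hat{P}\hat{\mathbb{D}}_2$ and the solvability of the decoupled forward SDE simply make explicit the details the paper suppresses.
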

	
\begin{proof}
Similar to the proof of Proposition \ref{prop-3.5}.
\end{proof}
	
\section{Robust Stackelberg equilibrium}
	
As above mentioned, once (\ref{R-4}) admits a solution $\hat{P}(\cdot)$, the BSDE (\ref{B-4}) admits a unique adapted solution $(\hat{\varphi}(\cdot),\hat{\Lambda}(\cdot))$. Then, the equation
\begin{equation}
\left\{\begin{aligned}\label{LSDE}
	\mathrm{d}\hat{\mathbb{X}}(t)&=(\tilde{\mathbb{A}}\hat{\mathbb{X}}+\tilde{\mathbb{B}})\mathrm{d}t+(\tilde{\mathbb{C}}\hat{\mathbb{X}}+\tilde{\mathbb{D}})\mathrm{d}W, \\
	\hat{\mathbb{X}}(0)&=\hat{\Xi},
\end{aligned}\right.
\end{equation}
admits a unique solution $\hat{\mathbb{X}}(\cdot)$, where
\begin{equation*}
\left\{\begin{aligned}
   &\tilde{\mathbb{A}}:=\hat{\mathbb{A}}_1+\hat{\mathbb{B}}_1\hat{P}+\hat{\mathbb{B}}_2(I-\hat{P}\hat{\mathbb{D}}_2)^{-1}\hat{P}(\hat{\mathbb{C}}_1+\hat{\mathbb{D}}_1\hat{P}), \\			&\tilde{\mathbb{B}}:=\hat{\mathbb{B}}_1\hat{\varphi}+\hat{\mathbb{B}}_2(I-\hat{P}\hat{\mathbb{D}}_2)^{-1}(\hat{P}\hat{\mathbb{D}}_1\hat{\varphi}+\hat{P}\hat{\Sigma}+\hat{\Lambda})+\hat{\mathbb{F}}, \\
   &\tilde{\mathbb{C}}:=\hat{\mathbb{C}}_1+\hat{\mathbb{D}}_1\hat{P}+\hat{\mathbb{D}}_2(I-\hat{P}\hat{\mathbb{D}}_2)^{-1}\hat{P}(\hat{\mathbb{C}}+\hat{\mathbb{D}}_1\hat{P}), \\		&\tilde{\mathbb{D}}:=\hat{\mathbb{D}}_1\hat{\varphi}+\hat{\mathbb{D}}_2(I-\hat{P}\hat{\mathbb{D}}_2)^{-1}(\hat{P}\hat{\mathbb{D}}_1\hat{\varphi}+\hat{P}\hat{\Sigma}+\hat{\Lambda})+\hat{\Sigma}.
\end{aligned}\right.
\end{equation*}
Furthermore, the second equation in (\ref{Hamiltonian system}) admits a unique solution $(\hat{\mathbb{Y}}(\cdot),\hat{\mathbb{Z}}(\cdot))$.
		
For convenience, we introduce some notations:
\begin{equation*}
\left\{\begin{aligned}
	M_1&:=(1,0,0,0,0,0,0,0,0,0),\quad M_2:=(0,1,0,0,0,0,0,0,0,0), \\
	M_3&:=(1,1,0,0,0,0,0,0,0,0),\quad M_4:=(0,0,0,0,0,1,0,0,0,0),\\
    M_5&:=(0,0,0,0,0,0,1,0,0,0),\quad M_6:=(0,0,0,0,0,0,0,1,0,0),\\
    M_7&:=(0,0,0,0,0,0,0,0,1,0), \\
    \hat{P}_M^1&:=B_1^\top M_4\hat{P}+D_1^\top M_4(I-\hat{P}\hat{\mathbb{D}}_2)^{-1}(\hat{P}\hat{\mathbb{C}}_1+\hat{P}\hat{\mathbb{D}}_1\hat{P})\\
    &\quad -\big(B_1^\top P+D_1^\top PC\big)M_2-D_1^\top PD_2\mathbb{R}^{-1}\hat{P}_M^2, \\
    \hat{P}_M^2&:=\big(B_2^\top-D_2^\top PD_1\widetilde{R}_1^{-1} B_1^\top \big)M_3\hat{P}\\
    &\quad +\Big[B_2^\top P+D_2^\top PC-D_2^\top PD_1\widetilde{R}_1^{-1}\big(B_1^\top P+D_1^\top PC\big) \Big]M_7 \\
    &\quad -D_2^\top PD_1R\big(B_1^\top P+D_1^\top PC\big)M_2+D_2^\top PD_1RB_1^\top M_7\hat{P} \\
    &\quad +\big(D_2^\top M_3-D_2^\top PD_1\widetilde{R}_1^{-1}D_1^\top M_3+D_2^\top PD_1RD_1^\top M_7\big)\\
    &\quad \times(I-\hat{P}\hat{\mathbb{D}}_2)^{-1}(\hat{P}\hat{\mathbb{C}}_1+\hat{P}\hat{\mathbb{D}}_1\hat{P}), \\
    \hat{\varphi}_M^1&:=B_1^\top M_4\hat{\varphi}+D_1^\top M_4(I-\hat{P}\hat{\mathbb{D}}_2)^{-1}(\hat{P}\hat{\mathbb{D}}_1\hat{\varphi}+\hat{P}\hat{\Lambda})\\
    &\quad -D_1^\top P\sigma-D_1^\top PD_2\mathbb{R}^{-1}\hat{\varphi}_M^2, \\
    \hat{\varphi}_M^2&:=\big(B_2^\top-D_2^\top PD_1\widetilde{R}_1^{-1} B_1^\top \big)M_3\hat{\varphi}+D_2^\top PD_1RB_1^\top M_7\hat{\varphi}\\
    &\quad -D_2^\top PD_1RD_1^\top P\sigma+\big(D_2^\top M_3-D_2^\top PD_1\widetilde{R}_1^{-1}D_1^\top M_3 \\
    &\quad +D_2^\top PD_1RD_1^\top M_7\big)(I-\hat{P}\hat{\mathbb{D}}_2)^{-1}(\hat{P}\hat{\mathbb{D}}_1\hat{\varphi}+\hat{P}\hat{\Lambda}). \\
\end{aligned}\right.
\end{equation*}
		
\begin{mypro}
Let (A1)-(A8) hold. Suppose Riccati equation (\ref{R-4}) admits a unique solution $\hat{P}(\cdot)$ and the Lyapunov equation
\begin{equation}\label{value function-Lyapunov equation}
\left\{\begin{aligned}
	&\dot{\mathbb{L}}+\mathbb{L}\tilde{\mathbb{A}}+\tilde{\mathbb{A}}^\top\mathbb{L}+\tilde{\mathbb{C}}^\top\mathbb{L}\tilde{\mathbb{C}}+M_1^\top QM_1+{{}\hat{P}_M^1}^\top R\hat{P}_M^1+{{}\hat{P}_M^2}^\top\mathbb{R}^{-1}R_2\mathbb{R}^{-1}\hat{P}_M^2=0, \\
	&\mathbb{L}(T)=M_1^\top GM_1,
\end{aligned}\right.
\end{equation}
admits a unique solution $\mathbb{L}(\cdot)$ and the following BSDE
\begin{equation}\label{value function-BSDE}
\left\{\begin{aligned}
	\mathrm{d}\psi(t)&=-\Big[\tilde{\mathbb{A}}^\top\psi+\tilde{\mathbb{C}}^\top\bar{\psi}+\mathbb{L}\tilde{\mathbb{B}}+\tilde{\mathbb{C}}^\top\mathbb{L}\tilde{\mathbb{D}}
    +{{}\hat{P}_M^1}^\top R\hat{\varphi}_M^1\\
    &\qquad +{{}\hat{P}_M^2}^\top\mathbb{R}^{-1}R_2\mathbb{R}^{-1}\hat{\varphi}_M^2\Big]\mathrm{d}t+\bar{\psi}(t)\mathrm{d}W, \\
	\psi(T)&=0,
\end{aligned}\right.
\end{equation}
admits a unique solution $(\psi(\cdot),\bar{\psi}(\cdot))$. Let matrix-valued process $N(\cdot)$ be the solution to
\begin{equation*}
\left\{\begin{aligned}
	\mathrm{d}N(t)&=\tilde{\mathbb{A}}(t)N(t)\mathrm{d}t+\tilde{\mathbb{C}}(t)N(t)\mathrm{d}W, \\
	N(0)&=I.
\end{aligned}\right.
\end{equation*}
Then, the forward state $\hat{\mathbb{X}}(\cdot)$ satisfying (\ref{LSDE}) can be written as
\begin{equation}\label{final state}
\begin{aligned}
	\hat{\mathbb{X}}(t)&=N(t)\hat{\Xi}+N(t)\int_0^t N^{-1}(s)\big[\tilde{\mathbb{B}}(s)-\tilde{\mathbb{C}}(s)\tilde{\mathbb{D}}(s)\big]\mathrm{d}s\\
    &\quad +N(t)\int_0^t N^{-1}(s)\tilde{\mathbb{D}}(s)\mathrm{d}W(s).
\end{aligned}
\end{equation}
The robust Stackelberg equilibrium can be designed as
\begin{equation}\label{robust strategies}
	\left\{
	\begin{aligned}
		\bar{u}_1(t)&=\widetilde{R}_1^{-1}(\hat{P}_M^1\hat{\mathbb{X}}+\hat{\varphi}_M^1), \\
		\bar{u}_2(t)&=\mathbb{R}^{-1}(\hat{P}_M^2\hat{\mathbb{X}}+\hat{\varphi}_M^2), \\
		\bar{f}(t)&=-\frac{2}{\alpha}R_0^{-1}M_5(\hat{P}\hat{\mathbb{X}}+\hat{\varphi}), \\
		\bar{f}_2(t)&=\frac{2}{\gamma}\hat{R}_0^{-1}M_6(\hat{P}\hat{\mathbb{X}}+\hat{\varphi}),\quad t\in[0,T],
	\end{aligned}
	\right.
\end{equation}
where $f(t)\equiv f_1(t)+f_2(t)$. Moreover, the optimal cost functional can be represented by
\begin{equation}\label{value function}
	\begin{aligned}
		V(\xi)&:=J(\xi;\bar{u}_1(\cdot),\bar{u}_2(\cdot))=\mathbb{E}\int_0^T\Big[{{}\hat{\varphi}_M^1}^\top R\hat{\varphi}_M^1+{{}\hat{\varphi}_M^2}^\top\mathbb{R}^{-1}R_2\mathbb{R}^{-1}\hat{\varphi}_M^2 \\
		&+\tilde{\mathbb{D}}^\top\mathbb{L}\tilde{\mathbb{D}}+2(\tilde{\mathbb{B}}^\top\psi+\tilde{\mathbb{D}}^\top\bar{\psi})\Big]\mathrm{d}t
		+\hat{\Xi}^\top\mathbb{L}(0)\hat{\Xi}+2\hat{\Xi}^\top\psi(0).
	\end{aligned}
	\end{equation}
\end{mypro}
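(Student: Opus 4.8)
The statement collects three claims: the closed form (\ref{final state}) of the state $\hat{\mathbb{X}}$, the feedback representation (\ref{robust strategies}) of the robust Stackelberg equilibrium, and the value formula (\ref{value function}). By the discussion preceding it, the four-step scheme already decouples the Hamiltonian system (\ref{Hamiltonian system}) through $\hat{\mathbb{Y}}=\hat{P}\hat{\mathbb{X}}+\hat{\varphi}$, with $\hat{\mathbb{X}}$ governed by the linear SDE (\ref{LSDE}); applying It\^o's formula to $\hat{P}\hat{\mathbb{X}}+\hat{\varphi}$ and matching the diffusion against the backward part of (\ref{Hamiltonian system}) also yields the companion relation
\[
\hat{\mathbb{Z}}=(I-\hat{P}\hat{\mathbb{D}}_2)^{-1}\big[\hat{P}(\hat{\mathbb{C}}_1+\hat{\mathbb{D}}_1\hat{P})\hat{\mathbb{X}}+\hat{P}\hat{\mathbb{D}}_1\hat{\varphi}+\hat{P}\hat{\Sigma}+\hat{\Lambda}\big].
\]
To obtain (\ref{final state}) I would use the stochastic variation-of-constants formula along the fundamental matrix $N$. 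Writing $Y:=N^{-1}$, It\^o's formula on the identity $NY=I$ gives $\mathrm{d}Y=Y(-\tilde{\mathbb{A}}+\tilde{\mathbb{C}}^{2})\mathrm{d}t-Y\tilde{\mathbb{C}}\,\mathrm{d}W$, and a second application to $Y\hat{\mathbb{X}}$ cancels every term carrying $\hat{\mathbb{X}}$, leaving $\mathrm{d}(N^{-1}\hat{\mathbb{X}})=N^{-1}(\tilde{\mathbb{B}}-\tilde{\mathbb{C}}\tilde{\mathbb{D}})\mathrm{d}t+N^{-1}\tilde{\mathbb{D}}\,\mathrm{d}W$; integrating from $0$ and left-multiplying by $N(t)$ reproduces (\ref{final state}) exactly.

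For (\ref{robust strategies}), the plan is to substitute $\hat{\mathbb{Y}}=\hat{P}\hat{\mathbb{X}}+\hat{\varphi}$ and the displayed expression for $\hat{\mathbb{Z}}$ into the follower response (\ref{bar u_1'}) and the leader response (\ref{u_2*}), extracting the blocks $\bar{x},\tilde{\bar{x}},\hat{x},\hat{\bar{x}},\hat{\tilde{\bar{x}}},\kappa,\chi,\upsilon$ by means of the coordinate selectors $M_1,\dots,M_7$. Collecting the coefficient of $\hat{\mathbb{X}}$ and the inhomogeneous residual produces respectively the pairs $(\hat{P}_M^1,\hat{\varphi}_M^1)$ and $(\hat{P}_M^2,\hat{\varphi}_M^2)$, so that $\bar{u}_1=\widetilde{R}_1^{-1}(\hat{P}_M^1\hat{\mathbb{X}}+\hat{\varphi}_M^1)$ and $\bar{u}_2=\mathbb{R}^{-1}(\hat{P}_M^2\hat{\mathbb{X}}+\hat{\varphi}_M^2)$. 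The two worst-case disturbances are then immediate: (\ref{wo-1}) and (\ref{wo-2}) give $\bar{f}=-\tfrac{2}{\alpha}R_0^{-1}\bar{p}$ and $\bar{f}_2=\tfrac{2}{\gamma}\hat{R}_0^{-1}\tilde{x}$, and since $\bar{p}=M_5\hat{\mathbb{Y}}$ and $\tilde{x}=M_6\hat{\mathbb{Y}}$ are the seventh and eighth blocks of $\hat{\mathbb{Y}}$, the relation $\hat{\mathbb{Y}}=\hat{P}\hat{\mathbb{X}}+\hat{\varphi}$ delivers the last two lines of (\ref{robust strategies}).

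For (\ref{value function}), I would insert $x=M_1\hat{\mathbb{X}}$ together with the feedback forms of $\bar{u}_1,\bar{u}_2$ into the original cost (\ref{cost}); since $R=\widetilde{R}_1^{-1}R_1\widetilde{R}_1^{-1}$, the running integrand becomes $\hat{\mathbb{X}}^\top\Theta\hat{\mathbb{X}}+2\hat{\mathbb{X}}^\top\theta+c$ with $\Theta:=M_1^\top QM_1+(\hat{P}_M^1)^\top R\hat{P}_M^1+(\hat{P}_M^2)^\top\mathbb{R}^{-1}R_2\mathbb{R}^{-1}\hat{P}_M^2$, cross vector $\theta:=(\hat{P}_M^1)^\top R\hat{\varphi}_M^1+(\hat{P}_M^2)^\top\mathbb{R}^{-1}R_2\mathbb{R}^{-1}\hat{\varphi}_M^2$, and constant $c:=(\hat{\varphi}_M^1)^\top R\hat{\varphi}_M^1+(\hat{\varphi}_M^2)^\top\mathbb{R}^{-1}R_2\mathbb{R}^{-1}\hat{\varphi}_M^2$. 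Applying It\^o's formula to $\langle\mathbb{L}\hat{\mathbb{X}},\hat{\mathbb{X}}\rangle+2\langle\psi,\hat{\mathbb{X}}\rangle$ along (\ref{LSDE}), the Lyapunov equation (\ref{value function-Lyapunov equation}) forces the quadratic part to absorb $-\Theta$ and the BSDE (\ref{value function-BSDE}) forces the linear part to absorb $-\theta$; taking expectations (the stochastic integrals having zero mean), using $\mathbb{L}(T)=M_1^\top GM_1$ and $\psi(T)=0$ for the terminal term $\langle Gx(T),x(T)\rangle$ and $\hat{\mathbb{X}}(0)=\hat{\Xi}$ for the endpoint, the $\Theta$- and $\theta$-contributions cancel against those from the integrand and precisely (\ref{value function}) remains.

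The two It\^o computations are routine. The main obstacle is the purely algebraic block bookkeeping for (\ref{robust strategies}): one must verify that substituting the decoupled $\hat{\mathbb{Y}}$ and $\hat{\mathbb{Z}}$ into (\ref{u_2*})---whose right-hand side scatters entries of all three augmented $10n$-vectors across many blocks of $\hat{P}$ and of $(I-\hat{P}\hat{\mathbb{D}}_2)^{-1}$---reassembles \emph{verbatim} into the compact coefficients $\hat{P}_M^2,\hat{\varphi}_M^2$, and likewise $\hat{P}_M^1,\hat{\varphi}_M^1$ from (\ref{bar u_1'}). Tracking the sign conventions, the placement of the selectors $M_1,\dots,M_7$, and the distinction between the $\widetilde{R}_1^{-1}$ and $R=\widetilde{R}_1^{-1}R_1\widetilde{R}_1^{-1}$ factors is the delicate point; all the underlying existence and wellposedness facts are already supplied by the solvability hypotheses on (\ref{R-4}), (\ref{value function-Lyapunov equation}) and (\ref{value function-BSDE}).
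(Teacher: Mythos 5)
Your proposal is correct and follows essentially the same route as the paper: the paper likewise obtains (\ref{final state}) from the fundamental solution $N(\cdot)$ (stated as a verification rather than your variation-of-constants derivation of $\mathrm{d}(N^{-1}\hat{\mathbb{X}})$, but the computation is identical), reads off (\ref{robust strategies}) by substituting the decoupling relation $\hat{\mathbb{Y}}=\hat{P}\hat{\mathbb{X}}+\hat{\varphi}$ into the feedback laws from Propositions \ref{prop-3.1}, \ref{prop-3.4} and Theorems \ref{thm-4.1}, \ref{thm-4.2} with the selectors $M_5$, $M_6$, and derives (\ref{value function}) by applying It\^o's formula to $\langle\mathbb{L}\hat{\mathbb{X}},\hat{\mathbb{X}}\rangle$ and $\langle\psi,\hat{\mathbb{X}}\rangle$ together with (\ref{value function-Lyapunov equation}) and (\ref{value function-BSDE}). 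Your write-up actually supplies more detail (the explicit cancellation structure and the quadratic/linear/constant split of the integrand) than the paper's own terse proof, and the block bookkeeping you flag as delicate is exactly what the paper elides.
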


\begin{proof}
Obviously, $N^{-1}(t)$ exists for all $t\geqslant0$ (Chapter 6 of Yong and Zhou \cite{Yong99}). It is easy to check $\hat{\mathbb{X}}(\cdot)$ given by (\ref{final state}) satisfies (\ref{LSDE}).
Based on Propositions \ref{prop-3.1}, \ref{prop-3.4} and Theorems \ref{thm-4.1}, \ref{thm-4.2}, we can determine the mappings
\begin{equation}
\left\{\begin{aligned}
	&\bar{f}(t)=\bar{\alpha}_1\left[\xi,\bar{u}_1(\cdot),\bar{u}_2(\cdot)\right](t)=-\frac{2}{\alpha}R_0^{-1}\bar{p}(t)=-\frac{2}{\alpha}R_0^{-1}M_5\big(\hat{P}\hat{\mathbb{X}}+\hat{\varphi}\big), \\
	&\bar{u}_1(t)=\bar{\alpha}_2\left[ \xi,\bar{u}_2(\cdot)\right] (t)=\widetilde{R}_1^{-1}\big(\hat{P}_M^1\hat{\mathbb{X}}+\hat{\varphi}_M^1\big), \\
	&\bar{f}_2(t)=\bar{\alpha}_3\left[ \xi,\bar{u}_2(\cdot)\right] (t)=\frac{2}{\gamma}\hat{R}_0^{-1}\tilde{x}(t)=\frac{2}{\gamma}\hat{R}_0^{-1}M_6\big(\hat{P}\hat{\mathbb{X}}+\hat{\varphi}\big), \\
	&\bar{u}_2(t)=\mathbb{R}^{-1}(\hat{P}_M^2\hat{\mathbb{X}}+\hat{\varphi}_M^2).
\end{aligned}\right.
\end{equation}
From above, we obtain the robust strategies (\ref{robust strategies}). Furthermore, applying It\^o's formula to $\langle \mathbb{L}\hat{\mathbb{X}}(\cdot),\hat{\mathbb{X}}(\cdot)\rangle$ and $\langle \psi(\cdot),\hat{\mathbb{X}}(\cdot)\rangle$, respectively, and noting $\mathbb{L}=\mathbb{L}^\top$, based on (\ref{value function-Lyapunov equation}) and (\ref{value function-BSDE}), we have (\ref{value function}).
The proof is complete.
\end{proof}

Because (A4) can ensure Riccati equation (\ref{R-4}) admits a solution $P_1(\cdot)$, we can skip it. Now, we may focus on the solvability of Riccati equations (\ref{R-2}) (\ref{R-3}) and (\ref{R-4}). For convenience, we discuss the solvability of (\ref{R-2}), (\ref{R-3}) and (\ref{R-4}) in a special but nontrivial case: $C(\cdot)\equiv0$, $D_1(\cdot)\equiv0$, $D_2(\cdot)\equiv0$.
		
In this case, $\hat{C}(\cdot)$, $\hat{B}_3(\cdot)$, $\hat{D}_1(\cdot)$, $\mathbb{C}(\cdot)$, $\mathbb{B}_3(\cdot)$, $\mathbb{D}_1(\cdot)$, $\hat{\mathbb{C}}_1(\cdot)$, $\hat{\mathbb{C}}_2(\cdot)$, $\hat{\mathbb{B}}_2(\cdot)$, $\hat{\mathbb{D}}_1(\cdot)$ all disappear. Then Riccati equations (\ref{R-2}), (\ref{R-3}) and (\ref{R-4}) become, respectively,
\begin{equation}\label{R-2'}
	\dot{P}_2+P_2\hat{A}_1+\hat{A}_2^\top P_2+P_2\hat{B}_1P_2-\hat{Q}=0 ,\quad P_2(T)=\hat{G},
\end{equation}
\begin{equation}\label{R-3'}
	\dot{P}_3+P_3\mathbb{A}+\mathbb{A}^\top P_3+P_3\mathbb{B}_1P_3-\mathbb{Q}=0 ,\quad P_3(T)=\mathbb{G},
\end{equation}
\begin{equation}\label{R-4'}
	\dot{\hat{P}}+\hat{P}\hat{\mathbb{A}}_1+\hat{\mathbb{A}}_2^\top\hat{P}+\hat{P}\hat{\mathbb{B}}_1\hat{P}-\hat{\mathbb{Q}}=0 ,\quad \hat{P}(T)=\hat{\mathbb{G}}.
\end{equation}
			
\begin{mypro}
Let (A1)-(A8) hold, $C(\cdot)\equiv0$, $D_1(\cdot)\equiv0$, $D_2(\cdot)\equiv0$. For any $s\in[0,T]$, let $\Psi_i(\cdot,s)$ be the solutions, respectively, to the following \emph{ordinary differential equations} (ODEs):
\begin{equation}
	\left\{\begin{aligned}
		&\frac{\mathrm{d}}{\mathrm{d} t} \Psi_i(t,s)=\check{\mathbb{A}}_i(t)\Psi_i(t,s),\quad t\in[s,T], \\
		&\Psi_i(s,s)=I,\quad i=1,2,3. \end{aligned}\right.
\end{equation}
Suppose that
$$
\left[ \begin{pmatrix}
	0&I
\end{pmatrix}\Psi_i(T,t)\begin{pmatrix}
	0\\
	I
\end{pmatrix}\right] ^{-1},\quad i=1,2,3
$$
are $L^1$ bounded. Then Riccati equations (\ref{R-2'})-(\ref{R-4'}) admit unique solutions $P_2(\cdot)$, $P_3(\cdot)$, $\hat{P}(\cdot)$, respectively, which are explicitly given by
\begin{equation}\label{Riccatis' solutions}
	\left\{\begin{aligned}
		&P_2(t)=\hat{G}-\left[ \begin{pmatrix}
			0&I
		\end{pmatrix}\Psi_1(T,t)\begin{pmatrix}
			0\\
			I
		\end{pmatrix}\right] ^{-1}\begin{pmatrix}
			0&I
		\end{pmatrix}\Psi_1(T,t)\begin{pmatrix}
			I\\
			0
		\end{pmatrix}, \\
		&P_3(t)=\mathbb{G}-\left[  \begin{pmatrix}
			0&I
		\end{pmatrix}\Psi_2(T,t)\begin{pmatrix}
			0\\
			I
		\end{pmatrix}\right] ^{-1}\begin{pmatrix}
			0&I
		\end{pmatrix}\Psi_2(T,t)\begin{pmatrix}
			I\\
			0
		\end{pmatrix}, \\
		&\hat{P}(t)=\hat{\mathbb{G}}-\left[  \begin{pmatrix}
			0&I
		\end{pmatrix}\Psi_3(T,t)\begin{pmatrix}
			0\\
			I
		\end{pmatrix}\right] ^{-1}\begin{pmatrix}
			0&I
		\end{pmatrix}\Psi_3(T,t)\begin{pmatrix}
			I\\
			0
		\end{pmatrix},
	\end{aligned}\right.
	\quad t\in[0,T],
\end{equation}
where
\begin{equation*}
	\left\{\begin{aligned}
		&\check{\mathbb{A}}_1(\cdot):=
		\begin{pmatrix}
			\hat{A}_1(\cdot)+\hat{B}_1(\cdot)\hat{G} & \hat{B}_1(\cdot)\\
			-\hat{G}\hat{A}_1(\cdot)-\hat{A}_2^\top(\cdot)\hat{G}-\hat{G}\hat{B}_1(\cdot)\hat{G}+\hat{Q}(\cdot)&-\hat{A}_2^\top(\cdot)-\hat{G}\hat{B}_1(\cdot)
		\end{pmatrix}, \\
		&\check{\mathbb{A}}_2(\cdot):=
		\begin{pmatrix}
			\mathbb{A}(\cdot)+\mathbb{B}_1(\cdot)\mathbb{G} & \mathbb{B}_1(\cdot)\\
			-\mathbb{G}\mathbb{A}(\cdot)-\mathbb{A}^\top(\cdot)\mathbb{G}-\mathbb{G}\mathbb{B}_1(\cdot)\mathbb{G}+\mathbb{Q}(\cdot)&-\mathbb{A}^\top(\cdot)-\mathbb{G}\mathbb{B}_1(\cdot)
		\end{pmatrix}, \\
		&\check{\mathbb{A}}_3(\cdot):=
		\begin{pmatrix}
			\hat{\mathbb{A}}_1(\cdot)+\hat{\mathbb{B}}_1(\cdot)\hat{\mathbb{G}} & \hat{\mathbb{B}}_1(\cdot)\\
			-\hat{\mathbb{G}}\hat{\mathbb{A}}_1(\cdot)-\hat{\mathbb{A}}_2^\top(\cdot)\hat{\mathbb{G}}-\hat{\mathbb{G}}\hat{\mathbb{B}}_1(\cdot)\hat{\mathbb{G}}+\hat{\mathbb{Q}}(\cdot)
			&-\hat{\mathbb{A}}_2^\top(\cdot)-\hat{\mathbb{G}}\hat{\mathbb{B}}_1(\cdot)
		\end{pmatrix}.
	\end{aligned}\right.
\end{equation*}

Then, the solution to the Lyapunov equation (\ref{value function-Lyapunov equation}) is characterized as
\begin{equation}\label{L}
	\begin{aligned}
		\mathbb{L}(t)&=e^{\int_t^T\big[\hat{\mathbb{A}}_1(r)+\hat{\mathbb{B}}_1(r)\hat{P}(r)\big]^\top \mathrm{d}r}M_1^\top GM_1e^{\int_t^T\big[\hat{\mathbb{A}}_1(r)+\hat{\mathbb{B}}_1(r)\hat{P}(r)\big]\mathrm{d}r}\\
		&\quad +\int_t^Te^{\int_t^s\big[\hat{\mathbb{A}}_1(r)+\hat{\mathbb{B}}_1(r)\hat{P}(r)\big]^\top \mathrm{d}r}\Big({{}\hat{P}_M^1}^\top R\hat{\varphi}_M^1\\
        &\quad +{{}\hat{P}_M^2}^\top\mathbb{R}^{-1}R_2\mathbb{R}^{-1}\hat{\varphi}_M^2\Big)e^{\int_t^s\big[\hat{\mathbb{A}}_1(r)+\hat{\mathbb{B}}_1(r)\hat{P}(r)\big]\mathrm{d}r}\mathrm{d}s,
	\end{aligned}
\end{equation}
and the solution to BSDE (\ref{value function-BSDE}) is given by
\begin{equation}\label{psi}
	\psi(t)=\mathbb{E}\left\{ \int_t^T\Big[\mathbb{L}(\hat{\mathbb{B}}_1\hat{\varphi}
    +\hat{\mathbb{F}})+{{}\hat{P}_M^1}^\top R\hat{\varphi}_M^1+{{}\hat{P}_M^2}^\top\mathbb{R}^{-1}R_2\mathbb{R}^{-1}\hat{\varphi}_M^2\Big]\Psi_4(s)\mathrm{d}s\Big\vert\mathcal{F}_t\right\},
\end{equation}
and $\Psi_4(\cdot)$ satisfies
\begin{equation*}
	\left\{\begin{aligned}
		\mathrm{d}\Psi_4(t)&=\big(\hat{\mathbb{A}}_1+\hat{\mathbb{B}}_1\hat{P}\big)\Psi_4(t)\mathrm{d}t, \\
		\Psi_4(T)&=I.
	\end{aligned}\right.
\end{equation*}
\end{mypro}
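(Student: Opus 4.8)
The plan is to prove the three assertions---the explicit Riccati solutions, the Lyapunov solution (\ref{L}), and the backward solution (\ref{psi})---one at a time, exploiting the structural collapse that occurs when $C\equiv D_1\equiv D_2\equiv0$. As already noted in the excerpt, in this regime $\hat{\mathbb{C}}_1,\hat{\mathbb{D}}_1,\hat{\mathbb{B}}_2$ (and likewise $\hat{\mathbb{D}}_2$) all vanish, so the closed-loop diffusion coefficient degenerates to $\tilde{\mathbb{C}}\equiv0$ and the closed-loop drift reduces to $\tilde{\mathbb{A}}=\hat{\mathbb{A}}_1+\hat{\mathbb{B}}_1\hat{P}$. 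This single fact linearizes everything downstream.

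For the Riccati equations, I would observe that (\ref{R-2'}), (\ref{R-3'}), (\ref{R-4'}) all share the nonsymmetric form $\dot{P}+P\mathcal{A}_1+\mathcal{A}_2^\top P+P\mathcal{B}P-\mathcal{Q}=0$ with constant terminal value $\mathcal{G}\in\{\hat{G},\mathbb{G},\hat{\mathbb{G}}\}$. The key preliminary step is the shift $\tilde{P}:=P-\mathcal{G}$, which enforces $\tilde{P}(T)=0$ and, upon expansion, turns the equation into one whose associated Hamiltonian (linearized) coefficient is exactly the block matrix $\check{\mathbb{A}}_i$ of the statement; this is the point of the bottom-left entry $-\mathcal{G}\mathcal{A}_1-\mathcal{A}_2^\top\mathcal{G}-\mathcal{G}\mathcal{B}\mathcal{G}+\mathcal{Q}$ and the shifted diagonal blocks $\mathcal{A}_1+\mathcal{B}\mathcal{G}$, $-\mathcal{A}_2^\top-\mathcal{G}\mathcal{B}$. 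I would then invoke the classical linearization (Radon's lemma): letting $(U(\cdot)^\top,V(\cdot)^\top)^\top$ solve the linear system generated by $\check{\mathbb{A}}_i$ with terminal data $U(T)=I$, $V(T)=0$, one has $\tilde{P}(t)=V(t)U(t)^{-1}$ wherever $U(\cdot)$ is invertible. Propagating via the transition matrix, $(U(T)^\top,V(T)^\top)^\top=\Psi_i(T,t)(U(t)^\top,V(t)^\top)^\top$, the bottom block together with $V(T)=0$ gives $V(t)=-[(0,I)\Psi_i(T,t)(0,I)^\top]^{-1}(0,I)\Psi_i(T,t)(I,0)^\top U(t)$, hence $\tilde{P}(t)=-[(0,I)\Psi_i(T,t)(0,I)^\top]^{-1}(0,I)\Psi_i(T,t)(I,0)^\top$ and $P(t)=\mathcal{G}+\tilde{P}(t)$, which is precisely (\ref{Riccatis' solutions}). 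The assumed $L^1$-boundedness of $[(0,I)\Psi_i(T,t)(0,I)^\top]^{-1}$ is what I would use to certify that $U(\cdot)$ stays nonsingular on all of $[0,T]$, extending the local solution to a global one; uniqueness follows from the uniqueness of $\Psi_i(\cdot,\cdot)$ and the invertible correspondence $P\leftrightarrow(U,V)$.

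For the Lyapunov equation, $\tilde{\mathbb{C}}\equiv0$ eliminates the term $\tilde{\mathbb{C}}^\top\mathbb{L}\tilde{\mathbb{C}}$, so (\ref{value function-Lyapunov equation}) reduces to the purely linear matrix ODE $\dot{\mathbb{L}}+\mathbb{L}\tilde{\mathbb{A}}+\tilde{\mathbb{A}}^\top\mathbb{L}+\mathcal{S}=0$ with $\tilde{\mathbb{A}}=\hat{\mathbb{A}}_1+\hat{\mathbb{B}}_1\hat{P}$ and symmetric source $\mathcal{S}=M_1^\top QM_1+{\hat{P}_M^1}^\top R\hat{P}_M^1+{\hat{P}_M^2}^\top\mathbb{R}^{-1}R_2\mathbb{R}^{-1}\hat{P}_M^2$; variation of constants against the matrix exponential $e^{\int_t^s[\hat{\mathbb{A}}_1+\hat{\mathbb{B}}_1\hat{P}]\mathrm{d}r}$ generated by $\tilde{\mathbb{A}}$ yields (\ref{L}), and symmetry $\mathbb{L}=\mathbb{L}^\top$ is automatic since $\mathcal{S}$ and $M_1^\top GM_1$ are symmetric. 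For the backward equation, $\tilde{\mathbb{C}}\equiv0$ kills $\tilde{\mathbb{C}}^\top\bar{\psi}$ and $\tilde{\mathbb{C}}^\top\mathbb{L}\tilde{\mathbb{D}}$, leaving the linear BSDE $\mathrm{d}\psi=-[\tilde{\mathbb{A}}^\top\psi+\mathbb{L}(\hat{\mathbb{B}}_1\hat{\varphi}+\hat{\mathbb{F}})+{\hat{P}_M^1}^\top R\hat{\varphi}_M^1+{\hat{P}_M^2}^\top\mathbb{R}^{-1}R_2\mathbb{R}^{-1}\hat{\varphi}_M^2]\mathrm{d}t+\bar{\psi}\,\mathrm{d}W$, $\psi(T)=0$, with deterministic generator $\tilde{\mathbb{A}}^\top$; applying It\^o's product rule to $\Psi_4(\cdot)^\top\psi(\cdot)$, where $\Psi_4$ is the backward transition matrix with $\mathrm{d}\Psi_4=(\hat{\mathbb{A}}_1+\hat{\mathbb{B}}_1\hat{P})\Psi_4\,\mathrm{d}t$, $\Psi_4(T)=I$, cancels the $\tilde{\mathbb{A}}^\top\psi$ drift, and taking $\mathbb{E}[\,\cdot\mid\mathcal{F}_t]$ removes the martingale part and produces the representation (\ref{psi}), with $\bar{\psi}$ then recovered from the martingale representation theorem.

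The step I expect to be the main obstacle is the Riccati one. First, the block bookkeeping in the shift $\tilde{P}=P-\mathcal{G}$ must be carried out carefully so that the linearized coefficient reproduces $\check{\mathbb{A}}_i$ verbatim for all three (increasingly large) systems. More seriously, because these Riccati equations are nonsymmetric and indefinite, one cannot rely on monotonicity or comparison arguments to rule out finite-time blow-up; global solvability on $[0,T]$ rests entirely on translating the $L^1$-boundedness hypothesis on $[(0,I)\Psi_i(T,t)(0,I)^\top]^{-1}$ into the nonsingularity of $U(\cdot)$, and this link is the crux of the argument. The Lyapunov and backward steps are, by contrast, routine linear computations once $\tilde{\mathbb{C}}\equiv0$ is in hand.
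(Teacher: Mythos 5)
Your proposal is correct and takes essentially the same route as the paper: the paper also shifts $\Pi:=P_2-\hat{G}$ to obtain a Riccati equation with zero terminal value and then invokes Theorem 5.3 of Yong (2006), which is precisely the linearization (Radon's lemma) argument you spell out through the transition matrices $\Psi_i$, the block relation $V(t)=-[(0,I)\Psi_i(T,t)(0,I)^\top]^{-1}(0,I)\Psi_i(T,t)(I,0)^\top U(t)$, and $\Pi=VU^{-1}$. For the Lyapunov equation and the BSDE the paper likewise relies on the collapse $\tilde{\mathbb{C}}\equiv0$, $\tilde{\mathbb{A}}=\hat{\mathbb{A}}_1+\hat{\mathbb{B}}_1\hat{P}$ under $C\equiv D_1\equiv D_2\equiv0$ (turning them into a linear Sylvester-type ODE and a linear BSDE with deterministic generator) and cites Huang--Wang--Wu (2022) for exactly the variation-of-constants and conditional-expectation computations you carry out.
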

			
\begin{proof}
Because these three Riccati equations are with similar type, while the coefficients might be different, we take (\ref{R-2'}) as an example. Define $\Pi(t):=P_2(t)-\hat{G}$ for $t\in[0,T]$, thus
\begin{equation}
\left\{\begin{aligned}
		&\dot{\Pi}+\Pi(\hat{A}_1+\hat{B}_1\hat{G})+\big(\hat{A}_2^\top+\hat{G}^\top\hat{B}_1^\top\big)\Pi+\Pi\hat{B}_1\Pi+\hat{G}\hat{A}_1+\hat{A}_2^\top\hat{G}+\hat{G}\hat{B}_1\hat{G}-\hat{Q}=0, \\
		&\Pi(T)=0.
\end{aligned}\right.
\end{equation}
Combining Theorem 5.3 in Yong \cite{Yong06}, we obtain  (\ref{Riccatis' solutions}).

Under conditions $C(\cdot)\equiv0$, $D_1(\cdot)\equiv0$, $D_2(\cdot)\equiv0$, the Lyapunov equation (\ref{value function-Lyapunov equation}) is a backward differential Sylvester equation. See the paper \cite{Huang-Wang-Wu22} for detailed proof.
\end{proof}

Before the end of this section, we give an example.

\begin{Example}
	Suppose there are two producers in the market supplying the same product, and producer 1 has a relatively small production scale, which is called the follower. And producer 2 has a larger production scale, market share and production experience, which is the so-called leader. $u_1(\cdot)$, $u_2(\cdot)$ represents the output of producer 1 and producer 2, respectively. $x(\cdot)$ is the total quantity of the product in the market, which satisfies the following SDE:
\begin{equation}
\left\{\begin{aligned}
	dx(t)&=\big[(1-a)x(t)+u_1(t)+u_2(t)+f_1(t)+f_2(t)\big]dt\\
    &\quad +\big[cx(t)+u_1(t)+u_2(t)\big]dW(t), \\
	x(0)&=\xi,
\end{aligned}\right.
\end{equation}
where $f_1(\cdot)$ and $f_2(\cdot)$ are both unknown to the producer 1. For the producer 2, $f_1(\cdot)$ is fully observed and $f_2(\cdot)$ is an unknown disturbance. The parameter $a\in(0,1)$ is purchase rate of the product, $c\in\mathbb{R}$ represents some random environmental effect. The cost functional is the brand influence of Producer 2:
\begin{equation}
    J(\xi;u_1(\cdot),u_2(\cdot))= \mathbb{E}\bigg\{ \int_0^T\Big[ qx^2(t) + r_1u_1^2(t)+ r_2u^2_2(t)\Big]\mathrm{d}t+gx^2(T)\bigg\},
\end{equation}
where $q$, $g$, $r_1$, $r_2$ are assumed to be constant. The Producer 1 wants to minimize $J(\xi;u_1(\cdot),u_2(\cdot))$ by choosing $\bar{u}_1(\cdot)$ first, then Producer 2 wishes to maximize it buy selecting $\bar{u}_2(\cdot)$, which constitutes a zero-sum SLQ Stackelberg differential game. By the previous results we obtained, we can get the robust Stackelberg equilibrium:
\begin{equation*}
\begin{aligned}
    \bar{u}_1(t)&=\frac{1}{r_1+P}\big[\bar{y}(t)+\bar{z}(t)-P(1+c)\bar{x}(t)-P\bar{u}_2(t)\big]\vee 0,\\
    \bar{u}_2(t)&=\frac{1}{r_2+\frac{r_1}{(1+\frac{r_1}{P})^2}}\bigg[\frac{r_1}{r_1+P}(\hat{x}(t)+\hat{\bar{x}}(t)+\kappa(t)+\chi(t))+\frac{r_1P}{r_1+P}(1+c)\hat{\tilde{\bar{x}}}(t) \\
    &\qquad-\frac{r_1P^2}{(r_1+P)^2}(1+c)\bar{x}(t)+\frac{r_1P}{(r_1+P)^2}(\tilde{\bar{x}}(t)+\upsilon(t))\bigg]\vee 0,\quad t\in[0,T],
\end{aligned}
\end{equation*}
where $\bar{x}(\cdot)$, $\hat{\tilde{\bar{x}}}(\cdot)$; $\hat{x}(\cdot)$,  $\hat{\bar{x}}(\cdot)$, $\tilde{\bar{x}}(\cdot)$, $\bar{y}(\cdot)$ and $\bar{z}(\cdot)$,  $\kappa(\cdot)$, $\chi(\cdot)$, $\upsilon(\cdot)$ are elements of $\hat{\mathbb{X}}(\cdot)$, $\hat{\mathbb{Y}}(\cdot)$ and $\hat{\mathbb{Z}}(\cdot)$ which are adapted solutions of Hamiltonian system (\ref{Hamiltonian system}) after replacing the corresponding parameter. And $P(\cdot)$ is the solution to the following equation:
\begin{equation}\label{bode}
\left\{\begin{aligned}
    &\dot{P}(t)+\big[2(1-a)+c^2\big]P(t)-\frac{P^2(t)(1+c)^2}{r_1+P(t)}+q=0,\\
    &P(T)=g.
\end{aligned}\right.
 \end{equation}
In the above, since $u_i(\cdot), i=1,2$ is the output, which is positive, so we need to take a larger value between the results and zero.

Intuitively, regardless of other factors, the increase in the output of Producer 1 will weaken the brand effect of Producer 2 to a certain extent, so the output of Producer 1 is negatively correlated with that of Producer 2. The output of Producer 2 promotes its brand. Through the above analysis, we can get $r_1< 0$, $r_2> 0$, which shows that indefinite weighting matrices are necessary. We just need to make sure $r_1+P\gg 0$ and $r_2+\frac{r_1}{(1+\frac{r_1}{P})^2}\ll 0$. Because the robust Stackelberg equilibrium depends on the solution to the Hamiltonian system (\ref{Hamiltonian system}), it is insensitive to random perturbations in the state equation, which shows the robustness of the robust Stackelberg equilibrium.

Consider this problem with the parameters' values are $a=\frac{1}{2}$, $c=-1$, $q=g=1$, $T=2$, and the solution $P(\cdot)$ to the above Riccati equation is shown in Figure 1.
\begin{figure}[ht]
   	\centering
   	\includegraphics[width=0.7\linewidth]{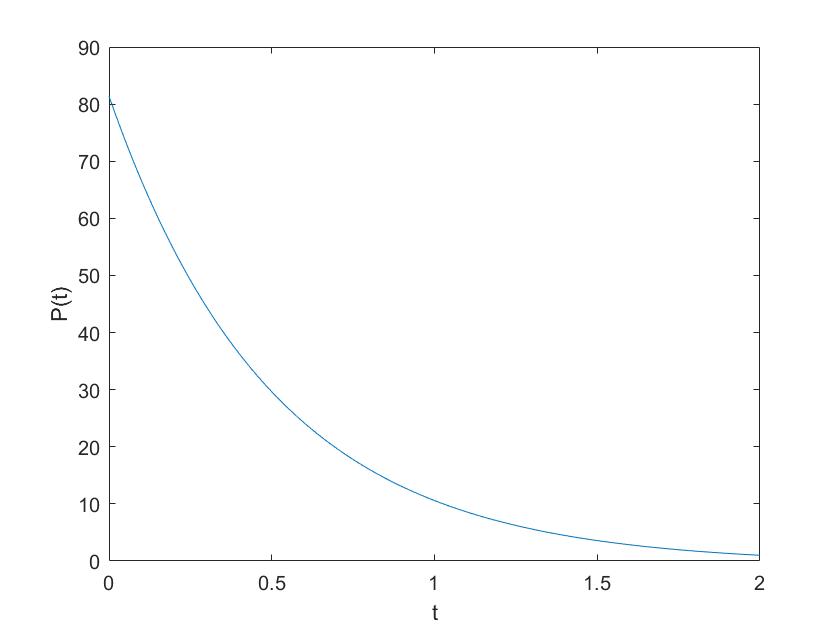}
   	\caption{The trajectory of $P(\cdot)$}
   	\label{fig:image}
\end{figure}
\end{Example}
			
\section{Conclusion}
			
In this paper, we have studied a class of zero-sum indefinite stochastic linear-quadratic Stackelberg differential games with two different drift uncertainties, which is more meaningful to describe the information/uncertainty asymmetry between the leader and the follower. By robust optimization method and decoupling technique, the robust Stackelberg equilibrium is obtained. The solvability of some matrix-valued Riccati equations are also discussed. Finally, the results are applied to the problem of market production and supply.

Other possible applications in practice are interesting research topic. Related problems with volatility uncertainties (\cite{Huang-Wang-Yong21}, \cite{Feng-Qiu-Wang24}) are also our future considerations.


\begin{thebibliography}{0}
	
\bibitem{Aberkane-Dragan23}S. Aberkan, V. Dragan. An addendum to the problem of zero-sum LQ stochastic mean-field dynamic games, \emph{Automatica}, 153, 111007, 2023.

\bibitem{Abou-Kandil et al03}H. Abou-Kandil, G. Freiling, V. Ionescu, and G. Jank. \emph{Riccati Equation in Control and Systems Theory}, Springer, Cham, Switzerland, 2003.

\bibitem{Aghassi-Bertsimas06}M. Aghassi, D. Bertsimas. Robust game theory, \emph{Math. Program.}, B, 107, 231-273, 2006.

\bibitem{Bagchi-Basar81}A. Bagchi, T. Ba\c{s}ar. Stackelberg strategies in linear-quadratic stochastic differential games, \emph{J. Optim. Theory Appl.}, 35, 443-464, 1981.
	
\bibitem{Bensoussan-Chen-Sethi15}A. Bensoussan, S. Chen, and S.P. Sethi. The maximum principle for global solutions of stochastic Stackelberg differential games, \emph{SIAM J. Control Optim.}, 53, 1956-1981, 2015.

\bibitem{Broek et al03}W.A. van den Broek, J.C. Engwerda, and J.M. Schumacher. Robust equilibria in indefinite linear-quadratic differential games, \emph{J. Optim. Theor Appl.}, 119, 565-595, 2003.

\bibitem{Feng-Hu-Huang24}X. Feng, Y. Hu, and J. Huang. Linear-quadratic two-person differential game: Nash game versus Stackelberg game, local information versus global information, \emph{ESAIM: Control Optim. Calc. Var.}, 30, 47, 2024.

\bibitem{Feng-Qiu-Wang24}X. Feng, Z. Qiu, and S. Wang. Linear quadratic mean-field game with volatility uncertainty, \emph{J. Math. Anal. Appl.}, 534, 128081, 2024..

\bibitem{Hu-Fikushima13}M. Hu, M. Fukushima. Existence, uniqueness, and computation of robust Nash equilibria in a class of multi-leader-follower games, \emph{SIAM J. Optim.}, 23, 894-916, 2013.

\bibitem{Huang-Huang13}J. Huang, M. Huang. Mean field LQG games with model uncertainty, in \emph{Proc. 52nd IEEE Conf. Decision Control}, 3103-3108, December 10-13, Florence, Italy, 2013.

\bibitem{Huang-Huang17}J. Huang, M. Huang. Robust mean field linear-quadratic-Gaussian games with unknown $L^2$-disturbance, \emph{SIAM J. Control Optim.}, 55, 2811-2840, 2017.

\bibitem{Huang-Wang-Yong21}J. Huang, B. Wang, and J. Yong. Social optima in mean field linear-quadratic-Gaussian control with volatility uncertainty, \emph{SIAM J. Control Optim.}, 59, 825-856, 2021.

\bibitem{Huang-Wang-Wu22}J. Huang, S. Wang, and Z. Wu. Robust Stackelberg differential game with model uncertainty, \emph{IEEE Trans. Autom. Control}, 67, 3363-3380, 2022.

\bibitem{Jia et al23}Y. Jia, X. Feng, J. Huang, and T. Xie. Robust backward linear-quadratic differential game and team: A soft-constraint analysis, \emph{Systems $\&$ Control Letters}, 177, 105533, 2023.

\bibitem{Jimenez-Poznyak06}M. Jimenez, A. Poznyak. $\epsilon$-equilibrium in LQ differential games with bounded uncertain disturbances: Robustness of standard strategies and new strategies with adaptation, \emph{Internat. J. Control}, 79, 786-797, 2006.

\bibitem{Korn-Menkens05}R. Korn, O. Menkens. Worst-case scenario portfolio optimization: a new stochastic control approach, \emph{Math. Meth. Oper. Res.}, 62, 123-140, 2005.

\bibitem{Li et al21}Z. Li, D. Marelli, M. Fu, Q. Cai, and W. Meng. Linear quadratic Gaussian Stackelberg game under asymmetric information patterns, \emph{Automatica}, 125, 109406, 2021.

\bibitem{Li-Yu18}N. Li, Z. Yu. Forward-backward stochastic differential equations and linear-quadratic gen-eralized Stackelberg games, \emph{SIAM J. Control Optim.}, 56, 4148-4180, 2018.

\bibitem{Lin-Zhang-Siu12}X. Lin, C. Zhang, and T.K. Siu. Stochastic differential portfolio games for an insurer in a jump-diffusion risk process, \emph{Math. Methods Oper. Res.}, 75, 83-100, 2012.

\bibitem{Lin-Jiang-Zhang19}Y. Lin, X. Jiang, and W. Zhang. An open-loop Stackelberg strategy for the linear quadratic mean-field stochastic differential game, \emph{IEEE Trans. Autom. Control}, 64, 97-110, 2019.

\bibitem{Ma-Yong99}J. Ma, J. Yong. \emph{Forward-Backward Stochastic Differential Equations and Their Applications}, Springer, Berlin, Germany, 1999.

\bibitem{Moon-Basar17}J. Moon, T. Ba\c{s}ar. Linear quadratic risk-sensitive and robust mean field games, \emph{IEEE Trans. Autom. Control}, 62, 1062-1077, 2017.

\bibitem{Moon-Basar18}J. Moon, T. Ba\c{s}ar. Linear quadratic mean field Stackelberg differential games, \emph{Automatica}, 97, 200-213, 2018.

\bibitem{Moon20}J. Moon, H. Yang. Linear-quadratic time-inconsistent mean-field type Stackelberg differential games: Time-consistent open-loop solutions, \emph{IEEE Trans. Autom. Control}, 66, 375-382, 2020.

\bibitem{Mou-Yong06}L. Mou, J. Yong. Two-person zero-sum linear quadratic stochastic differential games by a Hilbert space method, \emph{J. Ind. Manag. Optim.}, 2, 95-117, 2006.

\bibitem{Shi-Wang-Xiong16}J. Shi, G. Wang, and J. Xiong. Leader-follower stochastic differential game with asymmetric information and applications, \emph{Automatica}, 63, 60-73, 2016.

\bibitem{Shi-Wang-Xiong17}J. Shi, G. Wang, and J. Xiong. Linear-quadratic stochastic Stackelberg differential game with asymmetric information, \emph{Sci. China Infor. Sci.}, 60, 092202, 2017.

\bibitem{Stackelberg34}H. von Stackelberg. \emph{Marktform und Gleichgewicht}, Springer, Vienna, 1934. (An English translation appeared in \emph{The Theory of The Market Economy}, Oxford University Press, UK, 1952.)

\bibitem{Sun-Li-Yong16}J. Sun, X. Li, and J. Yong. Open-loop and closed-loop solvabilities for stochastic linear quadratic optimal control problems, \emph{SIAM J. Control Optim.}, 54, 2274-2308, 2016.

\bibitem{Sun-Wang-Wen23}J. Sun, H. Wang, and J. Wen. Zero-sum Stackelberg stochastic linear-quadratic differential games, \emph{SIAM J. Control Optim.}, 61, 252-284, 2023.

\bibitem{Sun-Yong14}J. Sun, J. Yong. Linear quadratic stocahastic differential games: Open-loop and closed-loop saddle points, \emph{SIAM J. Control Optim.}, 52, 4082-4121, 2014.

\bibitem{Wang24}B. Wang. Leader-follower mean field LQ games: A direct approach, \emph{Asian J. Control}, 26, 617-625, 2024.

\bibitem{Wang-Huang-Zhang21}B. Wang, J. Huang, and J. Zhang. Social optima in robust mean field LQG control: from finite to infinite horizon, \emph{IEEE Trans. Autom. Control}, 66, 1529-1544, 2021.

\bibitem{Wu-Xiong-Zhang24}F. Wu, J. Xiong, and X. Zhang. Zero-sum stochastic linear-quadratic Stackelberg differential games with jumps, \emph{Appl. Math. Optim.}, 89, 29, 2024.

\bibitem{Xie-Wang-Huang21}T. Xie, B. Wang, and J. Huang. Robust linear quadratic mean field social control: a direct approach, \emph{ESAIM: Control Optim. Calc. Var.}, 27, 20, 2021.

\bibitem{Yong02}J. Yong. A leader-follower stochastic linear quadratic differential game, \emph{SIAM J. Control Optim.}, 41, 1015-1041, 2002.

\bibitem{Yong06}J. Yong. Linear forward-backward stochastic differential equations with random coefficients, \emph{Probab. Theory Rela. Fields}, 135, 53-83, 2006.

\bibitem{Yong99}J. Yong, X. Zhou. \emph{Stochastic Controls: Hamiltonian Systems and HJB Equations}, Springer-Verlag, New York, USA, 1999.
		
\bibitem{Yu15}Z. Yu. An optimal feedback control-strategy pair for zero-sum linear-quadratic stochastic differential game: The Riccati equation approach, \emph{SIAM J. Control Optim.}, 53, 2141-2167, 2015.

\bibitem{Zheng-Shi22}Y. Zheng, J. Shi.  Stackelberg stochastic differential game with asymmetric noisy observations, \emph{Inter. J. Control}, 95, 2510-2530, 2022.

\end{thebibliography}
\end{document}